\newcommand{\what}{\widehat}%
\newcommand{\wtilde}{\widetilde}%
\newcommand{\R}{\mathbb R}%
\newcommand{\C}{\mathbb C}%
\newcommand{\Z}{\mathbb Z}%
\newcommand{\N}{\mathbb N}%
\newcommand{\mfk}{\mathfrak{k}}
\newcommand{\mfp}{\mathfrak{p}}
\newtheorem{theorem}{Theorem}[section]
\newtheorem{lemma}[theorem]{Lemma}
\newtheorem{proposition}[theorem]{Proposition}
\newtheorem{corollary}[theorem]{Corollary}
\theoremstyle{definition}
\newtheorem{definition}[theorem]{Definition}
\theoremstyle{definition}
\newtheorem{remark}[theorem]{Remark}
\numberwithin{equation}{section}
\begin{document}
\title[Ramanujan's master theorem]{Ramanujan's master theorem for radial sections of line bundle over noncompact symmetric spaces}

\subjclass[2000]{Primary 43A85; Secondary 22E30} \keywords{Ramanujan's master theorem, compact dual}

\author{ Sanjoy Pusti and Swagato K Ray}
\address{Department of Mathematics, IIT Bombay, Powai, Mumbai-400076, India}
\email{sanjoy@math.iitb.ac.in}

\address{Stat-Math Unit, Indian Statistical Institute, 203, B.T. Road, Kolkata-700108, India}
\email{swagato@isical.ac.in}

\thanks{The first author is partially supported by the seed grant no. 17IRCCSG013 of IIT bombay}
\begin{abstract}
We prove analogues of Ramanujan's Master theorem for the radial sections of the line bundles over the Poincar\'{e} upper half plane $\mathrm{SL}(2, \R)/\mathrm{SO}(2)$ and over the complex hyperbolic spaces $\mathrm {SU}(1, n)/ S(\mathrm{U}(1) \times \mathrm{U}(n))$.
\end{abstract}
\maketitle

\section{Introduction}
Ramanujan's Master theorem (\cite{Hardy}) states that if a function $f$ can be expanded around zero in a power series of the form $$f(x)=\sum_{k=0}^\infty (-1)^k a(k) x^k,$$ then 
\begin{equation}\label{eqn-11}
\int_0^\infty f(x) x^{-\lambda-1}\,dx=-\frac{\pi}{\sin\pi\lambda} a(\lambda), \, \text{ for }\lambda\in\C.
\end{equation}

One needs some assumptions on the function $a$, as the theorem is not true for $a(\lambda)=\sin\pi\lambda$. Hardy provides a rigorous statement of the theorem above as: Let $A, p, \delta$ be real constants such that $A<\pi$ and $0<\delta\leq 1$. Let $\mathcal H(\delta)=\{\lambda\in\C\mid \Re\lambda>-\delta\}$. Let $\mathcal H(A,p,\delta)$ be the collection of all holomorphic functions $a:\mathcal H(\delta)\rightarrow \mathbb C$ such that  $$ |a(\lambda)|\leq C e^{-p (\Re\lambda) + A |\Im\lambda|} \text{ for all } \lambda\in \mathcal H(\delta),$$ where $\Re\lambda, \Im\lambda$ respectively denote the real and imaginary parts of $\lambda$. 
\begin{theorem}[Ramanujan's Master theorem, Hardy \cite{Hardy}] \label{master-theorem}Suppose $a\in\mathcal H(A,p,\delta)$. Then 
\begin{enumerate}
\item The power series \begin{equation}\nonumber 
f(x)=\sum_{k=0}^\infty (-1)^k a(k) x^k,
\end{equation} converges for $0<x<e^p$ and defines a real analytic function on that domain.
\item Let $0<\eta<\delta$. For $0<x<e^p$ we have $$f(x)=\frac{1}{2\pi i}\int_{-\eta-i\infty}^{-\eta + i\infty}\frac{-\pi}{\sin \pi\lambda} a(\lambda) x^\lambda\, d\lambda.$$The integral on the right side of the equation above converges uniformly on compact subsets of $[0, \infty)$ and is independent of $\eta$.

\item Also \begin{equation}\nonumber \int_0^\infty f(x) x^{-\lambda-1}\,dx=-\frac{\pi}{\sin \pi\lambda} a(\lambda),\end{equation} holds for the extension of $f$ to $[0, \infty)$ and for all $\lambda\in\mathbb C$ with $0<\Re\lambda<\delta$.
\end{enumerate}

\end{theorem}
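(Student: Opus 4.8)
The plan is to treat the three parts separately, using only the defining bound of $\mathcal{H}(A,p,\delta)$ and the residues of $\mu\mapsto-\pi/\sin\pi\mu$, which has a simple pole at each $k\in\Z$ with residue $(-1)^{k+1}$; throughout I write $F(\mu)=\frac{-\pi}{\sin\pi\mu}\,a(\mu)$. Part (1) is immediate: evaluating the hypothesis at $\lambda=k\in\N$ gives $|a(k)|\le Ce^{-pk}$, so $|(-1)^ka(k)x^k|\le C(xe^{-p})^k$, a geometric majorant convergent exactly for $0<x<e^p$; real analyticity on that interval follows since the radius of convergence is at least $e^p$.

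For Part (2) I would first check convergence and holomorphy: on the line $\Re\mu=-\eta$ the bound $|\sin\pi\mu|^{-1}\le Ce^{-\pi|\Im\mu|}$ together with the hypothesis gives $|F(\mu)x^\mu|\le C'x^{-\eta}e^{(A-\pi)|\Im\mu|}$, which—because $A<\pi$—is integrable in $\Im\mu$ and uniformly so for $x$ in compact subsets of $(0,\infty)$, holomorphy in $x$ following from Morera's theorem. To evaluate the integral I would shift the contour from $\Re\mu=-\eta$ rightward to a line $\Re\mu=N$ with $N\notin\Z$: the residue theorem collects $\sum_{0\le k<N}(-1)^ka(k)x^k$, while the two horizontal segments and the far vertical line vanish as $N\to\infty$ because the same estimate now carries the factor $(xe^{-p})^{N}$—this is exactly where the restriction $x<e^p$ enters. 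Letting $N\to\infty$ yields $f(x)$. Independence of $\eta\in(0,\delta)$ is Cauchy's theorem, since moving the line inside the strip $-\delta<\Re\mu<0$ crosses no pole of $F$ and the connecting horizontal segments again vanish.

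For Part (3) the engine is Mellin inversion. Part (2) exhibits $f$ as the inverse Mellin transform of $F$ read along $\Re\mu=-\eta$; substituting $\mu\mapsto-\mu$ it says that $f$ is the inverse Mellin transform of $\mu\mapsto F(-\mu)$ along $\Re\mu=\eta\in(0,\delta)$, so by uniqueness of the Mellin pair the forward transform is $\int_0^\infty f(x)x^{\mu-1}\,dx=F(-\mu)=\frac{\pi}{\sin\pi\mu}\,a(-\mu)$ on $0<\Re\mu<\delta$; writing $\lambda=-\mu$ returns $\int_0^\infty f(x)x^{-\lambda-1}\,dx=\frac{-\pi}{\sin\pi\lambda}\,a(\lambda)$ on the strip where $x^{-\lambda-1}f(x)$ is integrable. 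To make this rigorous I would verify that membership: shifting the contour of (2) one step right past $\mu=0$ gives $f(x)=a(0)+O(x^{\eta'})\to a(0)$ as $x\to0^+$, while the representation on $\Re\mu=-\eta$ gives $f(x)=O(x^{-\eta})$ as $x\to\infty$ for every $\eta<\delta$; these pin the integrability of $x^{-\lambda-1}f(x)$ to $-\delta<\Re\lambda<0$ and place $f$ in the class to which the inversion theorem applies. Concretely one may instead split $\int_0^\infty=\int_0^1+\int_1^\infty$, integrate the power series term by term on $(0,1)$ and substitute (2) with Fubini on $(1,\infty)$, collecting the pole at $\mu=\lambda$.

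The main obstacle is the justification of the interchange of integration in (3). It rests entirely on the gain $e^{(A-\pi)|\Im\mu|}$ supplied by $A<\pi$, and on the delicate matching of the two-sided behaviour of $f$—the constant value $f(0)=a(0)$ at the origin, forcing $\Re\lambda<0$ there, against the polynomial decay $O(x^{-\eta})$ at infinity—with the weight $x^{-\lambda-1}$. Moreover, the fixed-$\eta$ representation degenerates at $x=0$, so the near-origin value $f(0)=a(0)$ must be extracted by the auxiliary contour shift rather than read off the integral directly.
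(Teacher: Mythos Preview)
The paper does not supply its own proof of this statement: Theorem~\ref{master-theorem} is quoted from Hardy as classical background, and the paper's original work begins only with the analogues in Theorems~\ref{Ramanujan-even-sl2} and~\ref{ramanujan-hyperbolic}. That said, your argument is essentially the classical one, and it is exactly the template the paper later adapts: the decisive step is the rightward contour shift in~(2), controlled by the margin $A<\pi$ in the competition between $|\sin\pi\mu|^{-1}\asymp e^{-\pi|\Im\mu|}$ and the growth $e^{A|\Im\mu|}$ of $a$; compare the rectangular contour with vertices $(0,\pm 2ki)$, $(2k,\pm 2ki)$ and the role of $A<\pi/2$ against $|\cos(\pi\lambda/2)|^{-1}$ in the proof of Theorem~\ref{Ramanujan-even-sl2}.

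One substantive remark on~(3): you correctly pin the strip of absolute convergence of $\int_0^\infty f(x)\,x^{-\lambda-1}\,dx$ to $-\delta<\Re\lambda<0$, since $f(0)=a(0)$ forces $\Re\lambda<0$ near the origin. This is at odds with the range $0<\Re\lambda<\delta$ printed in the statement, which is a slip---for the test case $a\equiv 1$ one has $f(x)=(1+x)^{-1}$, and the integral is the standard $\pi/\sin(-\pi\lambda)$, convergent precisely for $-1<\Re\lambda<0$. So your strip is the right one, and you might flag this explicitly. The only soft spot in your outline is the bare appeal to ``Mellin inversion'' as a black box; the concrete route you sketch at the end---splitting $\int_0^\infty=\int_0^1+\int_1^\infty$, summing termwise on $(0,1)$, and using the representation from~(2) with Fubini on $(1,\infty)$---is self-contained and preferable.
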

This theorem can be thought of as an interpolation theorem, which reconstructs the values of $a(\lambda)$ from its given values at $a(k), k\in \N\cup \{0\}$. In particular if $a(k)=0$ for all $k\in \N\cup \{0\}$, then $a$ is identically $0$.

Bertram (in \cite{Bertram}) provides a group theoretic interpretation of the theorem in the following way: Consider $x^\lambda, \lambda\in\C$ and $x^k, k\in\Z$ as the spherical functions on $X_G=\R^+$ and $X_U=U(1)$ respectively. Both $X_G$ and $X_U$ can be realized as the real forms of their complexification $X_\C=\C^\ast$. Let $\wtilde{f}$ and $\what{f}$ denote the spherical transformation of $f$ on $X_G$ and on $X_U$ respectively. Then it follows from equation (\ref{eqn-11}) that $$\wtilde{f}(\lambda)=-\frac{\pi}{\sin\pi\lambda} a(\lambda), \hspace{.2in} \what{f}(k)=(-1)^k a(k).$$ Using the duality between $X_U=U/K$ and $X_G=G/K$ inside their complexification $X_\C=G_\C/K_\C$, Bertram proved an analogue of Ramanujan's Master theorem for Riemannian symmetric spaces of noncompact type with rank one. This theorem was further extended by \'{O}lafsson and Pasquale (see \cite{Olafsson-1}) to higher rank case. Also it was further extended for the hypergeometric Fourier transform associated to root systems by \'{O}lafsson and Pasquale (see \cite{Olafsson-2}).

In this paper we prove analogue of  Ramanujan's Master theorem for the radial sections of line bundles over the Poincar\'{e} upper half plane  and over complex hyperbolic spaces. More precisely, in the first part of the paper, we work with the group $G=\mathrm{SL}(2, \R)$ but instead of the $K$-biinvariant functions with $K=\mathrm{SO}(2)$  we consider functions with the property \begin{equation}
\nonumber
f(k_\theta g k_\alpha)=e^{in(\theta + \alpha)} f(g),
\end{equation}
where $n\in\Z$ is fixed and $k_\theta, k_\alpha\in K$. However, this case offers a fresh challenge, in the sense that one needs to take the discrete series into the consideration for $n$ positive.

In the second  part of this paper, we consider $G=\mathrm {SU}(1, n), K=S(\mathrm{U}(1) \times \mathrm{U}(n))$ and consider one dimensional representations of $S(\mathrm{U}(1) \times \mathrm{U}(n))$. The one dimensional representations of $S(\mathrm{U}(1) \times \mathrm{U}(n))$ are given by $\chi_l$ for $l\in\Z$ (see section 3 for precise definition). This is the only case among the class of  real rank one semisimple Lie groups for which nontrivial one dimensional representations of $K$ exists. We prove analogue of Theorem \ref{master-theorem} for all $\chi_l$-radial functions with the restriction that $|l|<n$ (see Definition \ref{defn-chi_l-radial} for the defintion of $\chi_l$-radial functions). The reason for working under the above restriction on $l$ is the fact that the mathematical machinery related to Ramanujan's master theorem is available only for $l\in\Z$ with $|l|<n$ (see \cite{Heckman}, \cite{Ho}). We observe that discrete series representations do not arise in the spherical inversion formula for $\chi_l$-radial functions with $|l|<n$ (see (\ref{inversion-chi_l})).  We also observe that these two cases (that is, the case of line bundles over the Poincar\'{e} upper half plane and over complex hyperbolic spaces) are mutually disjoint.

In section $2$ we prove analogue of Ramanujan's master theorem for the radial sections of line bundles over Poincar\'{e} upper half plane and in section $3$ we prove the theorem for the radial sections of line bundles over complex hyperbolic spaces.

\section{Ramujan's Master theorem for Poincar\'{e} upper half plane}

Let $G=\mathrm{SL}(2,\R)$. Let $$k_\theta=
\left(\begin{array}{lll}
\cos\theta & \sin\theta \\
-\sin\theta & \cos\theta\end{array}\right), a_t=\left(\begin{array}{lll}e^t & 0\\
0 & e^{-t}\end{array}\right) \text{ and } n_\xi=\left(\begin{array}{lll} 1 & \xi\\
0 & 1\end{array}\right).$$ Then $K=\{k_\theta\mid \theta\in[0, 2\pi)\}, A=\{a_t\mid t\in \R\}, N=\{n_\xi\mid \xi\in\R\}$ are subgroups of $G$, in which $K=\mathrm{SO}(2)$ is a maximal compact subgroup of $G$. Let $G=KAN$ be an Iwasawa decomposition of $G$ and for $x\in G$, let $x=k_\theta a_t n_\xi$ be the corresponding decomposition. Then we write $K(x):=k_\theta$, $H(x)=t$, $a(x)=a_t$ and $n(x)=n_\xi$. In fact if $x=\left(\begin{array}{lll}
a & b \\

c & d

\end{array}\right)\in \mathrm{SL}(2,\R)$, then $\theta, t$ and $\xi$ are given by \begin{equation}\label{Iwasawa} e^{2t}=a^2 + c^2, e^{i\theta}=\frac{a-ic}{\sqrt{a^2 + c^2}} \text{ and } \xi=\frac{ab +cd}{\sqrt{a^2 + c^2}}.
\end{equation}
The Haar measure $dx$ of $G$ splits according to this decomposition as $$dx=e^{2t} dk_\theta\, dt\, d\xi,$$ where $dk_\theta=(2\pi)^{-1} d\theta$ is the normalised Haar measure of $K$ and $d\xi$, $dt$  are Lebesgue measures on $\R$. The Cartan decompositon of $G$ is given by $G=K \overline{A^+} K$ where $A^+=\{a_t\mid t>0\}$ and $x\in G$ can be written by $x=k_\theta a_t k_\phi$ with $t\geq 0$. Also the Haar measure $dx$ of $G$ corresponding to that decomposition is given by $$dx=\sinh 2t\,dk_\theta\,dt\,dk_\phi.$$ Let $\sigma(x)=\sigma(k_\theta a_t k_\phi):= |t|$. In fact $\sigma(x)=d(xK, eK)$ where $d$ is the distance function on $G/K$.

Let $\what{K}=\{e_n\mid n\in\Z\}$ be the set of irreducible unitary representations of $K$, where $e_n(k_\theta)=e^{in\theta}$. 
\begin{definition} A function $f$ on $G$ is said to be of type $(n,n)$ if \begin{equation}\label{n-n-type} f(k_\theta x k_\phi)=e_n(k_\theta) f(x) e_n(k_\phi),\,\, k_\theta, k_\phi \in K,  \,x\in G.\end{equation}  \end{definition}

 Let $M=\{\pm I\}$ where $I$ is the $2\times 2$ identity matrix. The unitary dual of $M$ is $\what{M}=\{\sigma_+, \sigma_-\}$ where $\sigma_+$ is the trivial representation and $\sigma_-$ is the only nontrivial unitary irreducible representation of $M$. Let $\Z^{\sigma^+}$ (respectively $\Z^{\sigma^-}$) be the set of even (respectively odd) integers.

For $\sigma\in\what{M}$ and $\lambda\in \C$, let $\left(\pi_{\sigma,\lambda}, H_\sigma\right)$ be the principal series representation of $G$, where $H_\sigma$ is the subspace of $L^2(K)$ generated by the orthonormal set $\{e_n\mid n\in\Z^\sigma\}$ is given by $$\left(\pi_{\sigma, \lambda}(x) e_n\right)(k_\theta)=e^{-(\lambda + 1)H(x^{-1}k_\theta^{-1})} e_n\left(K(x^{-1}k_\theta^{-1})^{-1}\right).$$This representation is unitary if and only if $\lambda\in i\R$.
For every $k\in\Z^\ast$, the set of nonzero integers, there is a discrete series representation $\pi_k$, which occur as a subrepresentation of $\pi_{\sigma, |k|}, k\in \Z\setminus \Z^\sigma$. For $n\in \Z^\sigma$ and $k\in\Z\setminus \Z^\sigma$, let $$\Phi_{\sigma,\lambda}^{n,n}(x)=\langle \pi_{\sigma, \lambda}(x) e_n, e_n\rangle,$$ and $$\Psi_k^{n,n}(x)=\langle \pi_k(x) e_n^k, e_n^k\rangle_k,$$ be the matrix coefficients of the Principal series and the discrete series representations respectively, where ${e_n^k}$ are the renormalised basis and $\langle \,,\rangle_k$ is the renormalised inner product for $\pi_k$. Therefore the integral representation of $\Phi_{\sigma,\lambda}^{n,n}$ is given by $$\begin{array}{lll}
\Phi_{\sigma,\lambda}^{n,n}(x) &=& \int_K e^{-(\lambda +1)H(x^{-1}k_\theta^{-1})} e_n\left(K(x^{-1}k_\theta^{-1})^{-1}\right) e_{-n}(k_\theta)\,dk_\theta, \\ \\
&=& \int_K e^{-(\lambda +1)H(x^{-1}k_\theta)} e_n\left(K(x^{-1}k_\theta)^{-1}k_\theta\right) \,dk_\theta. \end{array}$$ This follows that $\Phi_{\sigma,\lambda}^{n,n}$  is a $(n, n)$ type function. We observe that $\Phi_{\sigma^+, \lambda}^{0,0}$ is the elementary spherical function, denoted by $\phi_\lambda$. Also, $$|\Phi_{\sigma,\lambda}^{n,n}(x)|\leq \int_K e^{-(\Re\lambda +1)H(xk_\theta)}\,dk_\theta=\phi_{\Re\lambda}(x).$$ It is well known that for $\lambda\in\C$, $$|\phi_\lambda(x)|\leq C (1 + \sigma(x)) e^{(|\Re\lambda|-1)\sigma(x)}.$$ Therefore for all $\lambda\in\C$ we have \begin{equation}\label{estimate-98}|\Phi_{\sigma,\lambda}^{n,n}(x)|\leq C (1 + \sigma(x)) e^{(|\Re\lambda|-1)\sigma(x)}.
\end{equation}

For $\sigma\in \what{M}$, let $$-\sigma=\left\{\begin{array}{lll}
\sigma^- & \text{ if } & \sigma=\sigma^+, \\ 
\sigma^+ & \text{ if } & \sigma=\sigma^-.
\end{array}\right.$$  For $k\in \Z^\ast$, let $\sigma\in \what{M}$ be determined by $k\in\Z^{-\sigma}$ defined by 
$$\Z(k)=\left\{\begin{array}{lll}
\{n\in \Z^\sigma\mid n\geq k+1\} & \text{ if } & k\geq 1, \\ \\

\{n\in \Z^\sigma\mid n\leq k-1\} & \text{ if } & k\leq -1 .
\end{array}\right.$$
Then for $k\in \Z^\ast$ and $n\in \Z(k)$, we have (\cite[Proposition 7.3]{Barker}) \begin{equation}\label{relation-phi-psi}\Phi_{\sigma, |k|}^{n,n}=\Psi_k^{n,n},
\end{equation}  where $\sigma\in\what{M}$ is determined by $k\in\Z^{-\sigma}$. 

For a $(n,n)$-type function $f$ the spherical Fourier transform of $f$ is defined by $$\what{f}_H(\sigma, \lambda)=\int_G f(x)\Phi_{\sigma, \lambda}^{n,n}(x^{-1})\, dx,$$ and $$\what{f}_B(k)=\int_G f(x) \Psi_k^{n,n}(x^{-1})\,dx,$$ where $\sigma\in\what{M}$ is determined by $n\in \Z^\sigma$ and $k\in \Z^{-\sigma}$.

For $\sigma\in\what{M}$ and $n\in\Z^\sigma$, let $$L_\sigma^{n,n}=\{k\in\Z^{-\sigma}\mid 0<k<n \text{ or } n<k<0\}.$$
Then for a nice $(n,n)$-type function $f$ the inversion formula is given by (\cite[Theorem 10.2]{Barker}):

$$f(x)=\frac{1}{4\pi^2} \int_{i\R} \what{f}_H(\sigma, \lambda) \Phi_{\sigma, \lambda}^{n,n}(x) \mu(\sigma, \lambda)\,d\lambda + \frac{1}{2\pi} \sum_{k\in L_\sigma^{n,n}}\what{f}_B(k) \Psi_k^{n,n}(x) |k|,$$ where $\sigma\in \what{M}$ is determined by $n\in \Z^\sigma$ and $\mu(\sigma, \lambda)$ is given by \begin{equation}\label{plancherel-measure-sl2}\mu(\sigma, \lambda)=\left\{ \begin{array}{lll}
\frac{\lambda\pi i}{2}\tan(\frac{\pi\lambda}{2}) & \text{ if } & \sigma=\sigma^+, \\ \\

\frac{-\lambda\pi i}{2}\cot(\frac{\pi\lambda}{2}) & \text{ if } & \sigma=\sigma^-.
\end{array}\right.
\end{equation}

Let $\mathfrak{g}=\mathrm{sl}(2, \R)$ be the Lie algebra of $\mathrm{SL}(2, \R)$ and $\mathcal U(\mathfrak g)$ be the universal envelloping algebra of $\mathfrak g$.
For $0<p\leq 2$ the $L^p$-Schwartz spaces $\mathcal C^p(G)_{n,n}$ is the set of all $(n,n)$-type functions $f\in C^\infty(G)$ such that $$\sup_{x\in G}(1 +\sigma(x))^s\phi_0(x)^{-\frac 2p}\left|f(D;x;E)\right|<\infty,$$ for any $D, E\in\mathcal U(\mathfrak g)$ and any integer $s\geq 0$, where $f(D;x;E)$ is defined by $$f(D;x;E)=\frac{d}{dt}\left|_{t=0}\right.\frac{d}{ds}\left|_{s=0}\right.f(\exp tD\,x\,\exp sE).$$
The Schwartz space $\mathcal C^p(G)_{n,n}$ is topologized by the seminorms $$\sigma_{D, E, s}^p(f)=\sup_{x\in G}(1 +|x|)^s\phi_0(x)^{-\frac 2p}\left|f(D;x;E)\right|.$$ Then it follows that $C_c^\infty(G)_{n,n}$ is dense in $\mathcal C^p(G)_{n,n}$ and $\mathcal C^p(G)_{n,n}$ is dense in $L^p(G)_{n,n}$.

Let $\mathcal C^2_B(\what{G})_{n,n}$ be the set of all functions on $L_\sigma^{n,n}$. Then we have (see \cite[Theorem 16.1]{Barker})
\begin{theorem}
The map $f \mapsto (\what{f}_H, \what{f}_B)$ is a topological isomorphism from $\mathcal C^2(G)_{n,n}$ onto $\mathcal S(i\R)_e \times \mathcal C^2_B(\what{G})_{n,n}$.
\end{theorem}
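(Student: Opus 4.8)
The plan is to establish the three ingredients of a Schwartz-space Plancherel isomorphism: that the map $f\mapsto(\what f_H,\what f_B)$ sends $\mathcal C^2(G)_{n,n}$ continuously into $\mathcal S(i\R)_e\times\mathcal C^2_B(\what G)_{n,n}$, that it is injective, and that it is surjective with continuous inverse. A useful preliminary remark is that the second factor is \emph{finite dimensional}: $L_\sigma^{n,n}$ is the finite set of integers of the correct parity lying strictly between $0$ and $n$, so $\mathcal C^2_B(\what G)_{n,n}$ is merely the space of functions on a finite set. Hence the discrete-series component $\what f_B$ is a finite-rank piece, and all the analytic content lies in the principal-series component $\what f_H$ and in the factor $\mathcal S(i\R)_e$.

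For forward continuity, the first step is to use that each $\Phi_{\sigma,\lambda}^{n,n}$ is a joint eigenfunction of the invariant differential operators, in particular of the Casimir $\Omega$, with eigenvalue a polynomial in $\lambda$. Since $\Omega$ is bi-invariant and formally self-adjoint, $\int_G f(x)\,\Omega\Phi_{\sigma,\lambda}^{n,n}(x^{-1})\,dx=\int_G(\Omega f)(x)\,\Phi_{\sigma,\lambda}^{n,n}(x^{-1})\,dx$, so multiplication by $p(\lambda)$ corresponds to applying a differential operator to $f$. Combining this with the pointwise bound $(\ref{estimate-98})$ (which at $\Re\lambda=0$ gives $|\Phi_{\sigma,\lambda}^{n,n}|\leq C(1+\sigma(x))e^{-\sigma(x)}$), the $L^2$-Schwartz decay $|f(x)|\leq C_s\,\phi_0(x)(1+\sigma(x))^{-s}$, and the integrability of $\phi_0^2(1+\sigma)^{-s}$ for $s$ large, yields rapid decay of $\what f_H(\sigma,\cdot)$ on $i\R$ together with continuity in the Schwartz topology. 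The symmetry needed to land in $\mathcal S(i\R)_e$ comes from the functional equation of $\Phi_{\sigma,\lambda}^{n,n}$ under $\lambda\mapsto-\lambda$, i.e.\ the Harish-Chandra $c$-function relation. For $\what f_B$ there is nothing analytic to prove: square-integrability of the discrete-series coefficients $\Psi_k^{n,n}$ makes each pairing $\int_G f(x)\Psi_k^{n,n}(x^{-1})\,dx$ continuous in $f$, and $L_\sigma^{n,n}$ is finite.

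Injectivity is immediate from the inversion formula quoted above: if both $\what f_H$ and $\what f_B$ vanish, then $f=0$. The substance of the theorem is surjectivity with continuity of the inverse. Here I would run the inversion formula in reverse: given $g\in\mathcal S(i\R)_e$ and $\{c_k\}$ on $L_\sigma^{n,n}$, define the wave packet
$$f(x)=\frac{1}{4\pi^2}\int_{i\R}g(\sigma,\lambda)\,\Phi_{\sigma,\lambda}^{n,n}(x)\,\mu(\sigma,\lambda)\,d\lambda+\frac{1}{2\pi}\sum_{k\in L_\sigma^{n,n}}c_k\,\Psi_k^{n,n}(x)\,|k|,$$
and show $f\in\mathcal C^2(G)_{n,n}$ with seminorms bounded by those of $(g,\{c_k\})$. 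The finite discrete sum is a combination of exponentially decaying discrete-series coefficients, hence already lies in $\mathcal C^2(G)_{n,n}$ by the relation $(\ref{relation-phi-psi})$, so the heart of the matter is the integral over $i\R$.

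The hard part is precisely this wave-packet estimate on the continuous part. The crude bound $(\ref{estimate-98})$ is insufficient: to control $f(D;x;E)$ against the weight $\phi_0(x)$ uniformly one must invoke the full Harish-Chandra asymptotic expansion of $\Phi_{\sigma,\lambda}^{n,n}$ for large $\sigma(x)$, together with $c$-function estimates and the explicit density $(\ref{plancherel-measure-sl2})$, which is smooth and of polynomial growth on $i\R$ (the factor $\lambda$ cancels the pole of $\cot(\tfrac{\pi\lambda}{2})$ at the origin). Repeated integration by parts in $\lambda$, legitimate because $g$ is Schwartz and even, then produces the required decay in $\sigma(x)$. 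A clean way to organize all this, and the route I would follow, is to pass to the Abel/Jacobi-transform picture used in the $K$-biinvariant Harish-Chandra theory: $(n,n)$-type functions correspond to Jacobi functions of suitable parameters, the spherical transform factors through a one-dimensional Euclidean Fourier transform composed with an Abel transform, and the classical Schwartz isomorphism for $\mathcal F$ on $\R$ together with the mapping properties of the Abel transform deliver both surjectivity and bicontinuity. The only genuinely new bookkeeping relative to the biinvariant case is tracking the shift caused by $n$ and the finitely many discrete-series points in $L_\sigma^{n,n}$, which sit exactly where the evenness of $g$ is needed.
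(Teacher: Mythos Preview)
The paper does not prove this theorem; it is simply quoted as \cite[Theorem 16.1]{Barker} and used as background input for the rest of the article. So there is no ``paper's own proof'' to compare against.

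That said, your outline is a reasonable sketch of the standard strategy for such a Schwartz-space isomorphism, and it is broadly in the spirit of what Barker does (he works directly with matrix-coefficient asymptotics and Harish-Chandra expansions rather than routing everything through the Jacobi/Abel picture, but the ingredients you list---Casimir eigenvalue relations for polynomial growth, the bound $(\ref{estimate-98})$, wave-packet estimates for the inverse, and the finite-dimensionality of the discrete part---are the right ones). Two small points: the evenness $\what f_H(\sigma,\lambda)=\what f_H(\sigma,-\lambda)$ is immediate from the Weyl-group invariance $\Phi_{\sigma,\lambda}^{n,n}=\Phi_{\sigma,-\lambda}^{n,n}$ and does not require invoking the $c$-function; and your closing remark that the discrete-series parameters ``sit exactly where the evenness of $g$ is needed'' is not quite right---the points of $L_\sigma^{n,n}$ have the opposite parity from $n$ and play no role in the $\lambda\mapsto-\lambda$ symmetry of the continuous transform. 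Neither point affects the validity of the outline.
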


 The complexification of $\mathfrak{g}$ is $\mathfrak{g} + i \mathfrak{g}=sl(2, \C)$ is the Lie algebra of $G_\C=\mathrm{SL}(2, \C)$. The Cartan involution for $\mathfrak{g}$ is given by $\theta(A)=-A^T$.  Then $\mathfrak{g}=\mathfrak{k}\oplus\mathfrak{p}$ is the Cartan decomposition where $$\mathfrak{k}=\{A\mid \theta(A)=A\}=\{A\in \mathrm{sl}(2, \R)\mid A^T=-A\},$$ and $$\mathfrak{p}=\{A\mid \theta(A)=-A\}=\{A\in \mathrm{sl}(2, \R)\mid A^T=A\}.$$  The complexification of $\mfk$ is given by $$\mfk_\C=\mfk \oplus i\mfk=\{X\in sl(2, \C)\mid X^T=-X\}.$$ The corresponding connected group $K_\C$ whose Lie algebra is $\mfk_\C$ is given by $$K_\C=\mathrm{SO}(2, \C)=\{A\in GL(2, \C)\mid A^T A=I=AA^T\}.$$ This is noncompact group.

Now let $\mathfrak{u}=\mfk \oplus i\mfp$. Then $$\mathfrak{u}=\{A +i B\mid A \in \mfk, B\in \mfp\}=\{X\in sl(2,\C)\mid X^\ast=-X\},$$ and the corresponding connected group whose Lie algebra is $\mathfrak{u}$ is $U=\mathrm{SU}(2)$. 
Therefore starting with the Riemannian symmetric space of noncompact type $G/K=\mathrm{SL}(2, \R)/\mathrm{SO}(2)$ we get a Riemannian symmetric space of compact type $U/K=\mathrm{SU}(2)/\mathrm{SO}(2)$. Such a compact symmetric space $U/K$ is called the compact dual of $G/K$. We observe that \begin{equation}\nonumber 
G/K\cong \mathbb H^2=\{x+iy\mid x\in\R, y>0\} \text{ and } U/K\cong S^2.
\end{equation} It is easy to check that the complexification of $\mathfrak{g} $ is same as complexification of $\mathfrak{u}$, that is $\mathfrak{g}_\C=\mathfrak{u}_\C$. Also both the Riemannian symmetric spaces $G/K$ and $U/K$ are embedded as a totally real submanifold in the (non-Riemannian) symmetric space $G_\C/K_\C$.

Let $\mathcal P_m$ be the space of polynomials in two variables, with complex coefficients and homogeneous of degree $m$. We note that dimension of $\mathcal P_m$ is $m+1$. The set of irreducible representations of $U=\mathrm{SU}(2)$ is given by $\pi_m, m\in \N\cup\{0\}$ on $\mathcal P_m$ defined by $$\pi_m(g)f(u, v)=f\left((u, v) g)\right).$$

It is well known that $\pi_m$ has weights, $-m, -m +2, \cdots, m-2, m$ with weight vectors (say), $v_{-m}, v_{-m+2},  \cdots,v_{m-2}, v_m$ respectively (see \cite{Faraut}).
We say $\pi_m$ is $K$-spherical if there exists a nonzero function $f\in\mathcal P_m$ such that \begin{equation}\nonumber
 \pi_m(k_\theta)f=f \text{ for all } k_\theta\in K.
 \end{equation}

For a fixed $n\in\Z$, we say $\pi_m$ is $n$-spherical if  there exists a nonzero function $f \in\mathcal P_m$ such that \begin{equation}\nonumber
\pi_m(k_\theta)f=e_n(k_\theta^{-1})f  \text{ for all } k_\theta\in K,\end{equation} where $e_n(k_\theta)=e^{in\theta}$. Also such a vector $f$ will be called a {\em $n$-spherical vector}.

\begin{proposition}\label{n-spherical-repn-positive}
For $n\in \N\cup \{0\}$, the $n$-spherical representations are given by $\pi_{2m+n}, m\in\N\cup\{0\}$.
\end{proposition}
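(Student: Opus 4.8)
The plan is to restrict each $\pi_m$ to the circle $K=\mathrm{SO}(2)$, determine its weights, and observe that the $n$-spherical condition is exactly the requirement that one specific weight occurs.

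First I would diagonalize the $K$-action on $\mathcal P_m$. Since $\pi_m(k_\theta)f(u,v)=f\left((u,v)k_\theta\right)$ and $(u,v)k_\theta=(u\cos\theta-v\sin\theta,\,u\sin\theta+v\cos\theta)$, the linear forms $p=u+iv$ and $q=u-iv$ satisfy $\pi_m(k_\theta)p=e^{i\theta}p$ and $\pi_m(k_\theta)q=e^{-i\theta}q$, as a short direct computation shows. Consequently the monomials $p^{a}q^{m-a}$, $0\le a\le m$, form a basis of $\mathcal P_m$ consisting of $K$-eigenvectors with $\pi_m(k_\theta)\left(p^{a}q^{m-a}\right)=e^{i(2a-m)\theta}\,p^{a}q^{m-a}$. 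Thus the weights of $\pi_m|_K$ are exactly $2a-m$ for $0\le a\le m$, that is $-m,-m+2,\dots,m-2,m$, each occurring with multiplicity one. Conceptually this is nothing more than the fact that $\mathrm{SO}(2)$ is a maximal torus of $\mathrm{SU}(2)$, conjugate to the diagonal one, so $\pi_m|_K$ carries the same weights as the standard model; but the explicit eigen-computation above keeps the argument self-contained.

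Next I would observe that $e_n(k_\theta^{-1})=e^{-in\theta}$, so a nonzero $f\in\mathcal P_m$ is an $n$-spherical vector precisely when $\pi_m(k_\theta)f=e^{-in\theta}f$ for all $\theta$, i.e. when $f$ lies in the $K$-weight space of weight $-n$. By the previous step this weight space is nonzero if and only if $-n\in\{-m,-m+2,\dots,m\}$; since $n\ge 0$ this is equivalent to $m\ge n$ together with $m\equiv n \pmod 2$. Writing $m=n+2m'$ with $m'\in\N\cup\{0\}$ then produces exactly the representations $\pi_{2m'+n}$, and in each of them the $n$-spherical vector $p^{(m-n)/2}q^{(m+n)/2}$ is unique up to a scalar.

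The only point that genuinely requires care is the first step: the relevant $K=\mathrm{SO}(2)$ is \emph{not} the standard diagonal torus of $\mathrm{SU}(2)$, so one cannot simply quote the diagonal weight list and must either carry out the eigen-computation for $k_\theta$ above or invoke the conjugacy of maximal tori. Once the $K$-weights are identified, the conclusion is a direct reading of that list.
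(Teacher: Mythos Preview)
Your proof is correct. Your route differs from the paper's: you diagonalize the $K$-action on $\mathcal P_m$ via the basis $p^a q^{m-a}$ with $p=u+iv$, $q=u-iv$, read off the weight list $-m,-m+2,\dots,m$, and then simply check whether $-n$ appears. The paper instead solves the functional equation $f(u\cos\theta-v\sin\theta,\,u\sin\theta+v\cos\theta)=e^{-in\theta}f(u,v)$ directly by passing to polar coordinates $(r,\phi)$, obtaining $f(r\cos\phi,r\sin\phi)=f(1,0)\,r^{l}e^{-in\phi}$, and then recognizing this as $(u^2+v^2)^{(l-n)/2}(u-iv)^n$, which forces $(l-n)/2\in\N\cup\{0\}$. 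Both arguments land on the same explicit $n$-spherical vector, since your $p^{(m-n)/2}q^{(m+n)/2}=(u^2+v^2)^{(m-n)/2}(u-iv)^n$. Your approach is a bit more structural and immediately yields the one-dimensionality of the $n$-spherical subspace from the simplicity of each weight; the paper's polar-coordinate argument is more elementary in that it never names a basis, but it has to argue separately that the resulting expression is a polynomial.
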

\begin{proof}
Let $f\in\mathcal P_l$ be a $n$-spherical vector for $\pi_l$. Since $f\in\mathcal P_l$ we can write $f$ as $$f=c_0 f_0 + c_1 f_1 + \cdots + c_l f_l,$$ where $f_j\left((u, v)\right)=u^j v^{m-j}$ and $c_j\in\C$ for $ j=0, 1,\cdots m$. Then \begin{equation}\nonumber \pi_l(k_\theta)f=e^{-in\theta} f  \text{ for all } k_\theta\in K,
\end{equation} implies that \begin{equation} f\left((u\cos\theta -v\sin\theta, u\sin\theta + v\cos\theta)\right) =e^{-in\theta}f((u, v)),
\end{equation} for all $u, v$ and $\theta$. That is for all $r, \theta, \phi$ we have,
$$f\left(r\cos(\theta +\phi), r\sin(\theta +\phi)\right)= e^{-in\theta}f\left(r\cos\phi, r\sin\phi\right).$$
This implies that,
$$f\left(\cos\theta, \sin\theta\right)=e^{-i n\theta} f((1, 0)).$$
But $f$ is a homogeneous polynomial of degree $l$. Hence \begin{equation}\nonumber f\left(r\cos\theta, r\sin\theta\right)=r^l f\left(\cos\theta, \sin\theta\right)= f((1, 0)) r^l e^{-in\theta}=f((1, 0)) r^l (\cos\theta-i\sin\theta)^n.
\end{equation} Hence $$f(u, v)= f((1,0)) r^l \left(\frac ur-i\frac vr\right)^n=f((1,0)) (u^2 +v^2)^{\frac{l-n}{2}} (u-iv)^n.$$ This $f$ is a  polynomial of degree $l$ if and only if $\frac{l-n}{2}$ is a positive intger. Hence $\pi_l$ is a $n$-spherical representation if $l=n+ 2m$ with $m\in \N\cup \{0\}$ and in this case \begin{equation}\nonumber f((u,v))=(u^2 +v^2)^{\frac{l-n}{2}} (u-iv)^n,
\end{equation} is a $n$-spherical vector.

Conversely, if $l-2m=n$, it is easy to check that \begin{equation}\nonumber f((u,v))=(u^2 +v^2)^{\frac{l-n}{2}} (u-iv)^n,\end{equation} is a $n$-spherical vector.
\end{proof}
\begin{remark}
From the proof above it follows that the dimension of $n$-spherical vectors for the representation $\pi_{n+2m}$ is $1$.
\end{remark}
The proof of the following proposition is similar to that of Proposition \ref{n-spherical-repn-positive}.
\begin{proposition}
For any negative integer $n$, the $n$-spherical representations are given by $\pi_{2m+n}, m\in\N\cup\{0\}$ such that $n + m\geq 0$. In this case \begin{equation}\nonumber 
f((u,v))=(u^2 +v^2)^{m} (u-iv)^n,
\end{equation} is a $n$-spherical vector.
\end{proposition}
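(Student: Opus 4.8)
The plan is to follow the argument of Proposition~\ref{n-spherical-repn-positive} almost verbatim up to the point where the candidate vector is pinned down by an explicit formula, and then to isolate the one feature that genuinely changes when $n$ is negative. So I would start with an arbitrary nonzero $n$-spherical vector $f\in\mathcal P_l$ for $\pi_l$ and write out the defining relation $\pi_l(k_\theta)f=e_n(k_\theta^{-1})f=e^{-in\theta}f$, which translates, exactly as in the positive case, into
$$f\big(u\cos\theta-v\sin\theta,\;u\sin\theta+v\cos\theta\big)=e^{-in\theta}f(u,v)\quad\text{for all }u,v,\theta.$$
Passing to polar coordinates and specialising gives $f(\cos\theta,\sin\theta)=e^{-in\theta}f(1,0)$, and homogeneity of degree $l$ then yields the closed form
$$f(u,v)=f(1,0)\,(u^2+v^2)^{\frac{l-n}{2}}(u-iv)^n.$$
Nothing about the sign of $n$ has been used so far, so this formula is valid for every integer $n$.

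The step where the negative case departs from the positive one, and the one I expect to require the most care, is reading off the polynomiality constraint. For $n<0$ the factor $(u-iv)^n$ is a \emph{negative} power and is not itself a polynomial, so the expression above cannot be interpreted directly. The remedy is to absorb the negative power using $u^2+v^2=(u-iv)(u+iv)$: writing $n=-|n|$ one obtains
$$f(u,v)=f(1,0)\,(u-iv)^{\frac{l-|n|}{2}}\,(u+iv)^{\frac{l+|n|}{2}}.$$
For $f$ to be a genuine polynomial both exponents must be nonnegative integers; since they differ by the integer $|n|$, their simultaneous integrality is equivalent to $l\equiv n\pmod 2$, which is precisely what is encoded by writing $l=2m+n$ with $m\in\Z$. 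Under this substitution the two exponents become $m$ and $m+n$, so a nonzero $f\in\mathcal P_l$ exists exactly when $m\geq 0$ and $m+n\geq 0$. This is the asserted family $\pi_{2m+n}$, $m\in\N\cup\{0\}$, subject to $n+m\geq 0$.

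For the converse I would simply verify that the proposed vector works. Because $m+n\geq 0$, the expression $(u^2+v^2)^m(u-iv)^n=(u-iv)^{m+n}(u+iv)^m$ is an honest homogeneous polynomial of degree $2m+n=l$, hence lies in $\mathcal P_l$; and since rotation by $k_\theta$ fixes $u^2+v^2$ while multiplying $u-iv$ by $e^{-i\theta}$, a direct substitution gives $\pi_l(k_\theta)f=e^{-in\theta}f$, so $f$ is $n$-spherical. This completes the identification. The only nonroutine bookkeeping is translating the two nonnegativity conditions on $\tfrac{l\pm|n|}{2}$ into the single constraint $n+m\geq 0$ (together with $m\geq 0$); every other step is parallel to the positive case.
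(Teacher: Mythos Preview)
Your proof is correct and follows exactly the approach the paper intends: it says only that ``the proof of the following proposition is similar to that of Proposition~\ref{n-spherical-repn-positive}'', and you carry out that similarity while explicitly isolating the one new point---that for $n<0$ the factor $(u-iv)^n$ is not a polynomial, so the factorisation $u^2+v^2=(u-iv)(u+iv)$ must be used to see that polynomiality forces $m+n\ge 0$ in addition to $m\ge 0$. This is precisely the extra constraint in the statement, and your derivation of it is clean.
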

A function $f$ on $U$ is said to be of type $(n,n )$ if it satisfies the same condition of (\ref{n-n-type}) with $x\in G$ is replaced by $x\in U$.

We now define the $n$-spherical function $\psi_{2m +n, n}$ (associated to $\pi_{2m+n}$) by $$\psi_{2m +n, n}(x)=\frac{1}{\|f'\|^2}\langle \pi_{2m +n}(x^{-1}) f', f'\rangle,$$ where $f'$ is a $n$-fixed vector for $\pi_{2m+n}$. It is easy to check that $\psi_{2m +n, n}(e)=1$ and $\psi_{2m +n, n}$ is a $(n, n)$-type function. 

For $f\in L^1(U)$, the Fourier coefficients of $f$ are defined by $$\what{f}(m)=\int_U f(g)\pi_m(g^{-1})\,dg, \,\, m=0,1,2,\cdots.$$

 For a function $f\in L^2(U)$, the Fourier series of $f$ is  $$f(g)=\sum_{m=0}^\infty (m+1) \text{Tr}\left(\what{f}(m)\pi_m(g)\right).$$
 
 It is now easy to check that for a $(n, n)$-type function $f$ the possible nonzero Fourier coefficients are given by $$\what{f}(2m +n)=\int_U f(g)\psi_{n+2m, n}(g^{-1})\,dg, m=0, 1, 2, \cdots.$$ 
 
 Also for a $(n,n)$-type function, $n$ nonnegative integer, the Fourier series reduces to $$f(g)=\sum_{m=0}^\infty (2m+n+1)\what{f}(2m+n)\psi_{2m+n, n}(g).$$
 
 For a $(n,n)$-type function, $n$ negative integer, the Fourier series reduces to $$f(g)=\sum_{m=[\frac{|n|}{2}]}^\infty (2m+n+1)\what{f}(2m+n)\psi_{2m+n, n}(g).$$
 
 We will need the following Polar decomposition of $U$: $$U=\mathrm{SO}(2)\, B \,\mathrm{SO}(2),$$ where \begin{equation} \nonumber 
 B=\left\{\left(\begin{array}{ll}
 e^{it} & 0 \\
 0 & e^{-it}
 \end{array}\right)\mid t\in\R\right\}.
 \end{equation} 
 
 We now have the following relation between the $n$-spherical functions on $G$ and $U$.
 \begin{theorem}\label{phi-equal-psi}
 For $m,n\in \N\cup \{0\}$, the function $\psi_{2m + n,n}$ admits a holomorphic extension to $U_\C$ and \begin{equation}\nonumber \psi_{2m+n,n}\left|_G\right.=\Phi_{\sigma, 2m+n+1}^{n,n} \text{ with } n\in \Z^{\sigma}.
 \end{equation}
 \end{theorem}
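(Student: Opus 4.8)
The plan is to exploit the fact that $G=\mathrm{SL}(2,\R)$ and $U=\mathrm{SU}(2)$ are two real forms of the common complexification $G_\C=U_\C=\mathrm{SL}(2,\C)$, so that a single holomorphic function on $G_\C$ restricts on one side to $\psi_{2m+n,n}$ and on the other to $\Phi_{\sigma,2m+n+1}^{n,n}$. I would organize the argument around the observation that both functions are normalized radial eigenfunctions of one and the same invariant differential operator, and then invoke uniqueness.

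First I would establish the holomorphic extension. Since $\pi_{2m+n}$ is a finite-dimensional representation of the compact group $U$, by the unitary trick it extends to a holomorphic representation of $U_\C$, and hence the matrix coefficient $x\mapsto \|f'\|^{-2}\langle \pi_{2m+n}(x^{-1})f',f'\rangle$ extends to a holomorphic (in fact polynomial) function on $U_\C=G_\C$. Restricting this extension to the totally real submanifold $G\subset G_\C$ produces a well-defined real-analytic function on $G$, which I denote $\psi_{2m+n,n}\left|_G\right.$; its transformation law under the common maximal compact $K=\mathrm{SO}(2)$ persists, so $\psi_{2m+n,n}\left|_G\right.$ is of type $(n,n)$, and $\psi_{2m+n,n}\left|_G\right.(e)=1$. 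On the other side, $\Phi_{\sigma,\lambda}^{n,n}$ is of type $(n,n)$ by the computation in the excerpt and $\Phi_{\sigma,\lambda}^{n,n}(e)=\langle e_n,e_n\rangle=1$.

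Next I would match the two as eigenfunctions of the Casimir $\Omega\in\mathcal U(\mathfrak g_\C)=\mathcal U(\mathfrak u_\C)$, which is literally the same element acting on each side through the holomorphic extension. On the compact side $\pi_{2m+n}$ has highest weight $2m+n$, so $\Omega$ acts on its matrix coefficients by the scalar $(2m+n)(2m+n+2)$; on the noncompact side $\Omega$ acts on $\Phi_{\sigma,\lambda}^{n,n}$ by the principal-series scalar $\lambda^2-1$, in the normalization $\rho=1$ dictated by \eqref{estimate-98}. Since $(2m+n+1)^2-1=(2m+n)(2m+n+2)$, the eigenvalues coincide exactly at $\lambda=2m+n+1$; moreover the highest weight $2m+n$ has the same parity as $n$, which forces $n\in\Z^\sigma$, matching the hypothesis on $\sigma$. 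Thus $\psi_{2m+n,n}\left|_G\right.$ and $\Phi_{\sigma,2m+n+1}^{n,n}$ are both $(n,n)$-type eigenfunctions of $\Omega$ with the same eigenvalue.

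Finally, by the Cartan decomposition $G=K\overline{A^+}K$ together with the $(n,n)$-transformation law, each such function is determined by its restriction to $A^+$, which satisfies the radial part of $\Omega-(2m+n)(2m+n+2)$, a second-order linear ODE with a regular singular point at $t=0$. Smoothness at the identity selects the one-dimensional space of solutions regular at $t=0$, and the value $1$ at $e$ normalizes it uniquely; since both functions are smooth at $e$ and equal $1$ there, they solve the same initial value problem and therefore agree on $A^+$, hence on all of $G$. I expect the main obstacle to lie in this last step: writing down the radial ODE with its indicial exponents, verifying that the $(n,n)$-type condition forces the regular rather than the singular solution, and confirming the precise Casimir normalization that yields $\rho=1$ and the parity constraint $n\in\Z^\sigma$. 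Once these are in place, uniqueness of the normalized regular solution closes the argument. A more computational alternative would express both sides as Jacobi (hypergeometric) functions and match parameters directly, but the eigenfunction–uniqueness route avoids the explicit special-function identities.
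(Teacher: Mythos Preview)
Your argument is correct but follows a genuinely different route from the paper's. The paper proceeds by a direct representation-theoretic computation: it picks a highest weight vector $g$ for $\pi_{n+2m}$ normalized by $\langle g,f'/\|f'\|\rangle=1$, rewrites $\psi_{n+2m,n}(x)$ as the $K$-integral $\|f'\|^{-1}\int_K e_n(k_\theta)\langle\pi_{n+2m}(x^{-1}k_\theta)g,f'\rangle\,dk_\theta$ via the one-dimensional projection $P_K^n$, and then for $x\in G$ applies the Iwasawa decomposition together with the highest-weight property $\pi_{n+2m}(a_s n)g=e^{(n+2m)s}g$ to obtain exactly the defining integral of $\Phi_{\sigma,n+2m+1}^{n,n}$. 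No ODE or uniqueness principle is invoked.

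Your eigenfunction--uniqueness approach is cleaner conceptually and generalizes more readily to other rank-one situations; the paper's computation, by contrast, makes the identification completely explicit and yields as a byproduct the integral representation of $\psi_{n+2m,n}$ in terms of the highest weight vector, which is the form used later in the estimate of Proposition~\ref{estimate}. The point you flag as the main obstacle---that smoothness at $t=0$ together with the $(n,n)$-type condition singles out the regular solution of the radial Jacobi equation---is indeed the crux of your route; it holds here because (as noted later in the paper) $\Phi_{\sigma,\lambda}^{n,n}(a_t)=(\cosh t)^{2n}\phi_\lambda^{(0,2n)}(t)$ and the Jacobi function $\phi_\lambda^{(0,2n)}$ is the unique solution of its hypergeometric ODE that is analytic at $t=0$ with value $1$ there.
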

 \begin{proof}
The proof is similar to \cite[Lemma 4.6]{Ho}. Since $\pi_{n+2m}$ is a representation of the compact group $U$,  it extends holomorphically to $U_\C=\mathrm {SL}(2, \C)$. We denote the extended representation again by $\pi_{n + 2m}$. Let $n$ be fixed non-negative integer.  

We define $P_K^n$ on $\mathcal P_{n+2m}$ by $$P_K^n(f)=\int_K e_n(k_\theta)\pi_{n+ 2m}(k_\theta)f\, dk_\theta.$$ Then it is easy to check that $$\pi_{n+2m}(k_\theta) P_K^n(f)=e_n(k_\theta^{-1}) P_K^n(f),$$ that is $P_K^n(f)$ is an $n$ spherical vector.  Also $ P_K^n$ is an orthogonal projection and  self adjoint (that is, $(P_K^n)^\ast=P_K^n$). Let $f'$ be a $n$-spherical vector for $\pi_{n+2m}$. Since the space of $n$-spherical vectors is of dimension $1$, $$P_K^n(f)=\frac{1}{\|f'\|^2}\langle f, f'\rangle f'.$$ Let $g$ be the highest weight vector for $\pi_{n + 2m}$ with $\langle g, \frac{f'}{\|f'\|}\rangle=1$. We claim that: $$\psi_{n+2m, n}(x)=\frac{1}{\|f'\|}\int_Ke_n(k_\theta)\langle \pi_{n+2m}(x^{-1}k_\theta)g, f'\rangle\,dk_\theta.$$

We have $$P_K^n(g)=\frac{f'}{\|f'\|}.$$ Hence $$f'=\|f'\|\,  P_K^n(g)=\|f'\|\, \int_K e_n(k_\theta) \pi_{n + 2m}(k_\theta)g\,dk_\theta.$$This implies that $$\psi_{n+2m, n}(x)=\frac{1}{\|f'\|^2} \langle \pi_{n+2m}(x^{-1})f', f'\rangle=\frac{1}{\|f'\|}\int_Ke_n(k_\theta)\langle \pi_{n+2m}(x^{-1}k_\theta)g, f'\rangle\,dk_\theta.$$This is true for all $x\in U$ and hence true for all $x\in U_\C=\mathrm{SL}(2, \C)$.

For $x\in\mathrm{SL}(2,\R)$, $$\begin{array}{lll}
\langle \pi_{n+2m}(x^{-1}k_\theta)g, f'\rangle &=& \langle \pi_{n+2m}\left(K(x^{-1}k_\theta) a(x^{-1}k_\theta) n(x^{-1}k_\theta)\right)g, f'\rangle \\ \\
&=& \langle \pi_{n+2m}\left(a(x^{-1}k_\theta) n(x^{-1}k_\theta)\right)g, \pi_{n+2m}(K(x^{-1}k_\theta)^{-1})f'\rangle.

\end{array}$$

Since $g$ is a highest weight vector for $\pi_{n+2m}$, we have \begin{equation}\nonumber \pi_{n+2m}(n(x^{-1}k_\theta))g=g,
\end{equation} and \begin{equation}\nonumber
 \pi_{n+2m}(a(x^{-1}k_\theta))g=e^{(n+2m)H(x^{-1}k_\theta)}g.
\end{equation} Also since $f'$ is $n$-spherical vector, we have \begin{equation}\nonumber \pi_{n+2m}(K(x^{-1}k_\theta)^{-1})f'=e_n(K(x^{-1}k_\theta))f'.\end{equation} Therefore, $$\begin{array}{lll}
\langle \pi_{n+2m}(x^{-1}k_\theta)g, f'\rangle&=& \langle \pi_{n+2m}\left(a(x^{-1}k_\theta)\right)g, \pi_{n+2m}(K(x^{-1}k_\theta)^{-1})f'\rangle\\ \\
&=& \langle e^{(n+2m)H(x^{-1}k_\theta)}g, e_n(K(x^{-1}k_\theta))f'\rangle \\ \\
&=& e^{(n+2m)H(x^{-1}k_\theta)} e_{-n}(K(x^{-1}k_\theta)) (\|f'\|).
\end{array}$$

Hence for $x\in\mathrm{SL}(2, \R)$, $$\psi_{n+2m, n}(x)=\int_K e^{(n+2m)H(x^{-1}k_\theta)} e_{-n}(K(x^{-1}k_\theta)) e_n(k_\theta)\,dk_\theta=\Phi_{\sigma,n+2m+1}^{n,n}(x).$$This completes the proof.
 \end{proof}
The proof of the following theorem is similar.
 
 \begin{theorem}
 Let $n$ be a negative integer. Then for integers $m$ such that $n+m\geq 0$, $\psi_{2m + n,n}$ admits a holomorphic extension to $U_\C$ and \begin{equation}\nonumber \psi_{2m+n,n}\left|_G\right.=\Phi_{\sigma, 2m+n+1}^{n,n} \text{ with } n\in \Z^{\sigma}. \end{equation} 
 \end{theorem}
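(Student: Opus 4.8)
The plan is to run the argument of Theorem~\ref{phi-equal-psi} essentially verbatim, taking care only to identify the correct $n$-spherical vector and highest weight vector when $n<0$, and to isolate the single point where the hypothesis $n+m\geq 0$ is actually used. First, since $U=\mathrm{SU}(2)$ is compact, $\pi_{2m+n}$ extends holomorphically to $U_\C=\mathrm{SL}(2,\C)$, and we denote the extension again by $\pi_{2m+n}$. By the proposition describing the $n$-spherical representations for negative $n$, whenever $n+m\geq 0$ the representation $\pi_{2m+n}$ is $n$-spherical and (the argument being identical to the remark following Proposition~\ref{n-spherical-repn-positive}) its space of $n$-spherical vectors is one-dimensional, spanned by $f'(u,v)=(u-iv)^{m+n}(u+iv)^{m}$; this is the genuine polynomial meaning of $(u^2+v^2)^m(u-iv)^n$, since $(u^2+v^2)^m(u-iv)^n=(u-iv)^{m+n}(u+iv)^m$ is a bona fide polynomial exactly when $m+n\geq 0$. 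As before I would introduce $P_K^n(f)=\int_K e_n(k_\theta)\pi_{2m+n}(k_\theta)f\,dk_\theta$, observe it is a self-adjoint orthogonal projection onto the line $\C f'$, and conclude $P_K^n(f)=\|f'\|^{-2}\langle f,f'\rangle f'$.

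Next I would take $g$ to be the highest weight vector of $\pi_{2m+n}$, namely $g(u,v)=u^{2m+n}$, normalized so that $\langle g,f'/\|f'\|\rangle=1$. Exactly as in the proof of Theorem~\ref{phi-equal-psi}, writing $f'=\|f'\|\,P_K^n(g)$ yields
\[
\psi_{2m+n,n}(x)=\frac{1}{\|f'\|}\int_K e_n(k_\theta)\,\langle \pi_{2m+n}(x^{-1}k_\theta)g,f'\rangle\,dk_\theta,
\]
first for $x\in U$ and then, by holomorphic continuation, for all $x\in U_\C=\mathrm{SL}(2,\C)$, in particular for $x\in G=\mathrm{SL}(2,\R)$. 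Restricting to $G$ and inserting the Iwasawa decomposition $x^{-1}k_\theta=K(x^{-1}k_\theta)a(x^{-1}k_\theta)n(x^{-1}k_\theta)$, I would use that $g=u^{2m+n}$ is fixed by $N$ and has $A$-weight $2m+n$, that is $\pi_{2m+n}(n_\xi)g=g$ and $\pi_{2m+n}(a_t)g=e^{(2m+n)t}g$, together with the $n$-spherical identity $\pi_{2m+n}(K(x^{-1}k_\theta)^{-1})f'=e_n(K(x^{-1}k_\theta))f'$. These collapse the matrix coefficient to $e^{(2m+n)H(x^{-1}k_\theta)}e_{-n}(K(x^{-1}k_\theta))\|f'\|$, so that
\[
\psi_{2m+n,n}(x)=\int_K e^{(2m+n)H(x^{-1}k_\theta)}e_{-n}(K(x^{-1}k_\theta))e_n(k_\theta)\,dk_\theta=\Phi_{\sigma,2m+n+1}^{n,n}(x),\qquad n\in\Z^\sigma,
\]
which is the claimed identity.

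The one step that is not a literal transcription of the positive-weight proof, and hence the main obstacle, is the legitimacy of the normalization, i.e. the nonvanishing $\langle g,f'\rangle\neq 0$. Writing $u=\tfrac12\big((u-iv)+(u+iv)\big)$ and expanding $u^{2m+n}$ binomially in the orthogonal $K$-weight monomials $(u-iv)^a(u+iv)^b$, the coefficient of $f'=(u-iv)^{m+n}(u+iv)^m$ is $2^{-(2m+n)}\binom{2m+n}{m+n}$, so $\langle g,f'\rangle=2^{-(2m+n)}\binom{2m+n}{m+n}\|f'\|^2$. This is nonzero precisely because $0\leq m+n\leq 2m+n$, where the left inequality is exactly the hypothesis $n+m\geq 0$ and the right is $m\geq 0$. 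Thus the normalization is available under the stated restriction on $m$, and with this single verification in place the remainder of the argument is unchanged from the case $n\geq 0$.
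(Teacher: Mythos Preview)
Your proof is correct and follows exactly the approach indicated by the paper, which for this statement merely says ``the proof \ldots\ is similar'' to that of Theorem~\ref{phi-equal-psi}. You have in fact supplied more detail than the paper does, in particular the explicit identification $f'=(u-iv)^{m+n}(u+iv)^m$ and the verification that $\langle g,f'\rangle\neq 0$ via the binomial expansion, which is precisely where the hypothesis $n+m\geq 0$ enters.
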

 
 We need to estimate the $n$-spherical function $\psi_{2m+n, n}$ on $U_\C$. The following decomposition will help us in this regard.
 \begin{proposition}\label{decomposition-1}
 $G_\C=\mathrm{SL}(2,\C)$ has a unique decomposition $$\mathrm{SL}(2,\C)=\mathrm{SU}(2)\exp\overline{\mathfrak{a}^+}\,\,\mathrm{SO}(2,\C),$$ where $\mathfrak{a}=\left\{\left(\begin{array}{ll}
 t & 0\\
 0 & -t
 \end{array}\right)\mid t\in\R\right\}$.
 \end{proposition}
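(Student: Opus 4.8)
The plan is to translate the asserted decomposition into a statement about complex symmetric matrices and then invoke the Autonne--Takagi factorization. First I would introduce the holomorphic extension $\Theta(g)=(g^{T})^{-1}$ of the Cartan involution to $G_\C=\mathrm{SL}(2,\C)$; its fixed point set is exactly $\{g\mid g^{T}g=I\}=\mathrm{SO}(2,\C)=K_\C$. Associated to $\Theta$ is the quadratic map $q(g)=g\,\Theta(g)^{-1}=g\,g^{T}$, which sends $G_\C$ into the set $\mathcal{S}=\{s\in\mathrm{SL}(2,\C)\mid s^{T}=s\}$ of unimodular complex symmetric matrices and satisfies $q(gk)=q(g)$ for $k\in K_\C$ (since $kk^{T}=I$) and $q(ug)=u\,q(g)\,u^{T}$ for $u\in U=\mathrm{SU}(2)$. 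Under $q$, a decomposition $g=u\exp(Y)k$ with $Y=\mathrm{diag}(t,-t)$ becomes $g\,g^{T}=u\exp(2Y)u^{T}$; so it suffices to factor every $s\in\mathcal{S}$ as $s=u\,D\,u^{T}$ with $u\in\mathrm{SU}(2)$ and $D=\mathrm{diag}(e^{2t},e^{-2t})$, $t\ge 0$, and then lift this factorization back to $g$.

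Second, for existence I would apply the Autonne--Takagi factorization to the symmetric matrix $s=g\,g^{T}$: there is a unitary $V$ and a diagonal $\Sigma=\mathrm{diag}(\sigma_1,\sigma_2)$ with $\sigma_i\ge 0$ such that $s=V\Sigma V^{T}$. Taking determinants and using $\det s=1$, $|\det V|=1$ forces $\sigma_1\sigma_2=1$ (so both $\sigma_i>0$) and $\det V=\pm1$. If $\det V=-1$ I replace $V$ by $V\,\mathrm{diag}(1,-1)$, which has determinant $1$ and leaves $V\Sigma V^{T}$ unchanged; after ordering $\sigma_1\ge\sigma_2$ I set $\sigma_1=e^{2t}$, $\sigma_2=e^{-2t}$ with $t\ge 0$ and $Y=\mathrm{diag}(t,-t)\in\overline{\mathfrak{a}^{+}}$. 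This yields $u:=V\in\mathrm{SU}(2)$ and $D=\exp(2Y)$ with $s=uDu^{T}$. To lift, put $a=\exp(Y)$ (symmetric) and $k:=(ua)^{-1}g$; then, using $s=uDu^{T}=(ua)(ua)^{T}$, one gets $kk^{T}=(ua)^{-1}s\,((ua)^{-1})^{T}=I$, and $\det k=1$, so $k\in\mathrm{SO}(2,\C)$ and $g=u\exp(Y)k$, as required.

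Third, for the uniqueness of the $\exp\overline{\mathfrak{a}^{+}}$-component I would observe that the pair $\{e^{2t},e^{-2t}\}$ consists of the singular values of $s=g\,g^{T}$, which are invariants of the construction: for unitary $u$ one has $(usu^{T})^{*}(usu^{T})=\bar u\,(s^{*}s)\,\bar u^{-1}$, so the eigenvalues of $s^{*}s$, and hence the singular values of $s$, are unchanged under $s\mapsto usu^{T}$. Thus $t\ge 0$ is determined by $g$ alone, which is the precise sense in which the decomposition is unique; exactly as in the real Cartan decomposition $G=K\overline{A^{+}}K$, only the $\overline{\mathfrak{a}^{+}}$-part is canonical, the $\mathrm{SU}(2)$- and $\mathrm{SO}(2,\C)$-factors being determined only up to the stabilizer of $\exp(Y)$.

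The main point requiring care is the interface with Autonne--Takagi: one must normalize the unitary Takagi factor to lie in $\mathrm{SU}(2)$ rather than merely $\mathrm{U}(2)$ --- handled by the right multiplication by $\mathrm{diag}(1,-1)$ above --- and one must verify that the Takagi (singular) values are genuine $s\mapsto usu^{T}$-invariants in order to secure uniqueness. If a self-contained argument is preferred over citing Autonne--Takagi, the factorization of the $2\times 2$ symmetric matrix $s$ can be produced directly from the spectral analysis of the Hermitian matrix $\bar s\,s$, whose orthonormal eigenvectors assemble into the columns of $V$.
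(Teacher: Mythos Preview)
Your argument is correct, but it takes a genuinely different route from the paper's. The paper works entirely within Lie-theoretic structure theory: it introduces the two commuting involutions $\theta(X)=-X^{*}$ and $\sigma(X)=-X^{T}$ on $\mathfrak{sl}(2,\C)$, invokes a general result from Heckman--Schlichtkrull to write any $g\in\mathrm{SL}(2,\C)$ uniquely as $k\exp X\exp Y$ with $k\in\mathrm{SU}(2)$, $X\in\mathfrak{p}_1\cap\mathfrak{q}=\mathfrak{p}_{\mathfrak{sl}(2,\R)}$, $Y\in\mathfrak{p}_1\cap\mathfrak{h}\subset\mathfrak{so}(2,\C)$, and then applies the real Cartan decomposition of $\mathrm{SL}(2,\R)$ to $\exp X$. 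You instead reduce the problem, via the quadratic map $g\mapsto gg^{T}$, to factoring a unimodular complex symmetric matrix and appeal to the Autonne--Takagi theorem, then lift back. Your approach is more elementary and entirely self-contained for $\mathrm{SL}(2,\C)$, requiring only classical matrix analysis rather than an external structure-theory citation; the paper's approach, on the other hand, is formulated so as to generalize immediately --- indeed the same two-involution argument is reused verbatim later in the paper for $G=\mathrm{SU}(1,n)$, where a direct Takagi-type computation would be less transparent. Your remark that only the $\overline{\mathfrak{a}^{+}}$-component is canonically determined matches the paper's uniqueness claim, which likewise comes down to the uniqueness of the $\overline{A^{+}}$-part in the real Cartan decomposition.
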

 \begin{proof}
 Let $\theta(X)=-X^\ast$ be the Cartan involution for $\mathfrak{g}_\C=sl(2,\C)$. Then  $$\mathfrak{k_1}:=\mathfrak{g}_\C^{\theta}=\{X\in sl(2,\C)\mid \theta(X)=X\}=\{X\in sl(2,\C)\mid X^\ast=-X\}, $$  $$\mathfrak{p_1}:=\{X\in sl(2,\C)\mid d\theta(X)=-X\}=\{X\in sl(2,\C)\mid X^\ast=X\},$$ so that $$\mathfrak{g}_\C=\mathfrak{k_1}\oplus \mathfrak{p_1}.$$ Let us consider another involution $\sigma$ of $\mathfrak{g}_\C$ by $$\sigma(X)=-X^T.$$ Then $$\mathfrak{h}:=\mathfrak{g}_\C^\sigma=\{X\in sl(2,\C)\mid \sigma(X)=X\}=\{X\in sl(2,\C)\mid X^T=-X\}=so(2,\C),$$ and $$\mathfrak{q}:=\{X\in sl(2,\C)\mid \sigma(X)=-X\}=\{X\in sl(2,\C)\mid X^T=X\},$$ so that $$\mathfrak{g}_\C=\mathfrak{h}\oplus \mathfrak{q}.$$ Then $$\mathfrak{p_1}\cap \mathfrak{q}=\{X\in sl(2,\C)\mid \overline{X}^T=X=X^T\}=\{X\in sl(2,\R)\mid X^T=X\}=\mathfrak{p}_{sl(2,\R)},$$ and $$\mathfrak{p_1}\cap \mathfrak{h}=\{X\in sl(2,\C)\mid \overline{X}^T=X=-X^T\}=\left\{\left(\begin{array}{ll}
 0 & ib\\ \\
 -ib & -0
 \end{array}\right)\mid  b\in \R\right\}.$$Therefore (see \cite[Proposition 2.2, p. 106]{Heckman}), any element $g\in\mathrm{SL}(2,\C)$ can uniquely be written as $$g=k\exp X\exp Y \text{ for some } k\in\mathrm{SU}(2), X\in \mathfrak{p_1}\cap \mathfrak{q}, Y\in \mathfrak{p_1}\cap \mathfrak{h}.$$ Also since $\exp X\in \mathrm{SL}(2,\R)$, Cartan decomposition implies $$\exp X=k_\theta \exp Z\, k_\phi \text{ for some } k_\theta, k_\phi \in\mathrm{SO}(2) \text{ and for some unique }Z\in \left\{\left(\begin{array}{ll}
 t & 0\\ \\
 0 & -t
 \end{array}\right)\mid t\geq 0\right\}.$$ Hence $$g=(kk_\theta) \exp Z (k_\phi \exp Y) \text{ where } kk_\theta\in \mathrm{SU}(2), Z\in \left\{\left(\begin{array}{ll}
 t & 0\\ \\
 0 & -t
 \end{array}\right)\mid t\geq 0\right\},$$ and $ k_\phi\exp Y\in \mathrm{SO}(2,\C).$
 \end{proof}
 The following theorem gives the required estimate of the spherical function $\psi_{2m+n,n}$ on $U_\C$ (see \cite[Proposition 12]{Lassale} for corresponding result on $X_\C$.)
 \begin{proposition}\label{estimate}
 The $n$-spherical function $\psi_{2m+n,n}$ satisfies the following estimate: $$|\psi_{2m+n, n}(g)|\leq C \,e^{(2m+n)t} |e_n(h^{-1})|,$$ where $g=k\exp a(g) h$ with $k\in \mathrm{SU}(2), h\in \mathrm{SO}(2,\C)$ and $a(g)=\left(\begin{array}{ll}
 t & 0\\
 0 & -t
 \end{array}\right)\in\overline{\mathfrak{a}^+}.$
 \end{proposition}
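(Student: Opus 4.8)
The plan is to start from the matrix-coefficient description of the $n$-spherical function, namely $\psi_{2m+n,n}(g)=\|f'\|^{-2}\langle\pi_{n+2m}(g^{-1})f',f'\rangle$, where $f'$ spans the one-dimensional space of $n$-spherical vectors for $\pi_{n+2m}$; by the holomorphic extension established in the proof of Theorem \ref{phi-equal-psi} this identity is valid for all $g\in U_\C=\mathrm{SL}(2,\C)$. Writing $g=k\exp a(g)\,h$ as in Proposition \ref{decomposition-1}, with $k\in\mathrm{SU}(2)$, $a(g)=\mathrm{diag}(t,-t)$, $t\ge 0$, and $h\in\mathrm{SO}(2,\C)$, one has $g^{-1}=h^{-1}\exp(-a(g))k^{-1}$ and hence $\pi_{n+2m}(g^{-1})=\pi_{n+2m}(h^{-1})\pi_{n+2m}(\exp(-a(g)))\pi_{n+2m}(k^{-1})$. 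The idea is to treat the three factors separately: the $\mathrm{SU}(2)$ factor by unitarity, the $\exp\overline{\mathfrak{a}^+}$ factor by the weight decomposition, and the $\mathrm{SO}(2,\C)$ factor by the $n$-sphericality of $f'$.

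First I would dispose of $h$. Since $\pi_{n+2m}$ is unitary on the compact group $\mathrm{SU}(2)$, its holomorphic extension satisfies $\pi_{n+2m}(x)^\ast=\pi_{n+2m}(\bar x^T)$ for all $x\in\mathrm{SL}(2,\C)$ (both sides are antiholomorphic in $x$ and agree on the totally real submanifold $\mathrm{SU}(2)$). Moreover the defining relation $\pi_{n+2m}(k_\theta)f'=e_n(k_\theta^{-1})f'$ for $\theta\in\R$ extends, again by holomorphic continuation in $\theta$, to all of $\mathrm{SO}(2,\C)$. Moving $\pi_{n+2m}(h^{-1})$ onto the second slot of the inner product and applying these two facts extracts the $\mathrm{SO}(2,\C)$-covariance scalar, whose modulus is the factor $|e_n(h^{-1})|$ recorded in the statement, leaving $|\psi_{2m+n,n}(g)|=|e_n(h^{-1})|\,\|f'\|^{-2}\,|\langle\pi_{n+2m}(\exp(-a(g)))\pi_{n+2m}(k^{-1})f',f'\rangle|$.

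It then remains to bound the matrix coefficient $\langle\pi_{n+2m}(\exp(-a(g)))u,f'\rangle$ with $u:=\pi_{n+2m}(k^{-1})f'$. Because $k\in\mathrm{SU}(2)$ and $\pi_{n+2m}$ is unitary there, $\|u\|=\|f'\|$. Expanding $u=\sum_j a_j v_j$ in an orthonormal basis of weight vectors $v_j$ for the diagonal torus $B$, with weights $w_j\in\{-(n+2m),-(n+2m)+2,\dots,n+2m\}$, the element $\exp(-a(g))=\exp(-t\,\mathrm{diag}(1,-1))$ lies in the complexified torus and acts diagonally by $\pi_{n+2m}(\exp(-a(g)))v_j=e^{-t w_j}v_j$. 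Since $t\ge 0$ and every weight satisfies $w_j\ge-(n+2m)$, we get $e^{-t w_j}\le e^{(2m+n)t}$ for all $j$, and Cauchy--Schwarz yields $|\langle\pi_{n+2m}(\exp(-a(g)))u,f'\rangle|\le e^{(2m+n)t}\|u\|\,\|f'\|=e^{(2m+n)t}\|f'\|^2$. Combining the two steps gives $|\psi_{2m+n,n}(g)|\le C\,e^{(2m+n)t}|e_n(h^{-1})|$, as claimed.

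The main obstacle is the non-unitarity of $\pi_{n+2m}$ on $U_\C$: unlike on $\mathrm{SU}(2)$, the extended operators grow in the relevant directions, and the exponential growth must be pinned exactly to the lowest $B$-weight $-(n+2m)$. The decomposition of Proposition \ref{decomposition-1} is precisely what quarantines this growth into the single self-adjoint positive factor $\exp(-a(g))$, while keeping the other two factors under unitary, respectively covariance, control. The delicate bookkeeping is the adjoint identity $\pi_{n+2m}(x)^\ast=\pi_{n+2m}(\bar x^T)$ together with the holomorphic continuation of the $n$-sphericality of $f'$, which together pin down the exact $h$-dependent factor. This parallels the argument for $X_\C$ in \cite[Proposition 12]{Lassale} and \cite[Lemma 4.6]{Ho}.
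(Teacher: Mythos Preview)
Your argument is correct and takes a more elementary route than the paper. The paper introduces the rank-one projection $P_K^n$ onto $\C f'$, rewrites $\sum_j|\langle\pi_{n+2m}^\C(g)f^0,f^j\rangle|^2$ as the Hilbert--Schmidt norm $\|\pi_{n+2m}^\C(k\exp a(g))\,P_K^n\|_{\mathrm{HS}}^2$, expands this as a trace, uses the adjoint identity $\pi^\C(\exp a(g))^\ast\pi^\C(\exp a(g))=\pi^\C(\exp 2a(g))$, and finally bounds that trace in the weight basis by $e^{(4m+2n)t}\,\mathrm{Tr}(P_K^n)$. You instead bound the single matrix coefficient directly: strip off $h$ by the holomorphically continued $(n,n)$-covariance of $f'$, strip off $k$ by unitarity on $\mathrm{SU}(2)$, and then control $\|\pi(\exp(-a(g)))u\|$ by the largest eigenvalue $e^{(2m+n)t}$ in the weight basis, finishing with Cauchy--Schwarz. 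Your route is shorter and avoids the trace machinery; the paper's has the minor advantage of bounding the full column $\langle\pi(g)f^0,f^j\rangle$ simultaneously, though only $j=0$ is ever used.

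One small bookkeeping point: if you carry your $h$-step through, the scalar you actually extract is $\overline{e_n(\bar h^{-1})}=e_n(h)$ (equivalently, just apply the holomorphically extended $(n,n)$-property $\psi_{2m+n,n}(k\,a_t\,h)=\psi_{2m+n,n}(k\,a_t)\,e_n(h)$ directly, with no adjoints needed). So the factor your method produces is $|e_n(h)|$, not $|e_n(h^{-1})|$; for $h=k_\theta$ with $\theta\in\C$ these are reciprocal rather than equal. This has no effect on the only place the estimate is applied (Corollary~\ref{cor3}, where $h\in\mathrm{SO}(2)$ and both moduli are $1$), and the paper's own treatment of this factor is similarly loose, but you should be aware of the discrepancy.
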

 \begin{proof}
 The following steps will lead to the proof of the theorem.

 \noindent{\em Step-1:} Let $\pi:U\rightarrow \mathcal B(V)$ be a unitary representation of a compact group $U$. Then $d\pi:\mathfrak u\rightarrow \mathcal B(V)$ is a representation of the Lie algebra $\mathfrak u$ and since $\pi$ is unitary so $d\pi(X)^\ast=-d\pi(X)$. We extend $d\pi$ to $\mathfrak u_\C$ by, $(d\pi)^\C: \mathfrak u + i\mathfrak u\rightarrow \mathcal B(V)$ as $$(d\pi)^\C(X + iY)=d\pi(X) + i\, d\pi(Y).$$Then $$(d\pi)^\C(iY)^\ast=\left(i d\pi(Y)\right)^\ast=i\,d\pi(Y).$$ We define $\pi^\C:U_\C\rightarrow \mathcal B(V)$  as $\pi^\C(\exp(X+iY))=\exp(d\pi^\C(X + iY))$. Then \begin{equation}\label{eqn-101} \pi_\C(\exp(iY))^\ast \pi_\C(\exp(iY))=\exp d\pi^\C(iY)^\ast \exp d\pi^\C(iY)=\exp\left(2id\pi(Y)\right).
 \end{equation}
 
 \noindent{\em Step-2:} Let $f'$ be a $n$-spherical vector for $\pi_{n+2m}$ (with respect to $K$). Therefore it follows that $f'$ is a $n$-spherical vector for $\pi_{n+2m}^\C$ (with respect to $K_\C=\mathrm{SO}(2,\C)$).
 
 \noindent{\em Step-3:}  We consider an orthonormal basis of $\mathcal P_{n +2m}$ taking one element as $f^0=\frac{f'}{\|f'\|}$, where $f'$ is a $n$-spherical vector and let the basis be $\{f^0, f^1, \cdots, f^{n +2m}\}$. We claim that $$\sum_{j=0}^{2m+n}\left|\langle \pi_{n+2m}^\C(g)f^0, f^j\rangle\right|^2\leq C e^{(4m+2n)t} |e_n(h^{-1})|^2,$$ where $g=k\exp a(g) h$ and $a(g)=\left(\begin{array}{ll}
 t & 0\\
 0 & -t
 \end{array}\right)$. As a consequence it will follow that $$\left|\psi_{n+2m, n}(k\exp a(g)h)\right|\leq e^{(2m+n)t} |e_n(h^{-1})|.$$

 The projection $P_K^n$ satisfies $$\pi_{n+2m}(k) P_K^n(f)=e_n(k^{-1}) P_K^n(f).$$ Using the properties that $P_K^n$ is an orthogonal projection, self adjoint and the space of $n$-spherical vectors is of dimension $1$ it follows that
$$\begin{array}{lll}
 \sum_{j=0}^{2m+n}\left|\langle \pi_{n+2m}^\C(k\exp a(g) h)f^0, f^j\rangle\right|^2 &=& \sum_{j=0}^{2m+n}\left|\langle \pi_{n+2m}^\C(k\exp a(g)) e_n(h^{-1})f^0, f^j\rangle\right|^2 \\ \\
 &=& \sum_{i, j=0}^{2m+n}|e_n(h^{-1})|^2\left|\langle \pi_{n+2m}^\C(k\exp a(g)) P_K^n f^i, f^j\rangle\right|^2 \\ \\
 &=& |e_n(h^{-1})|^2 \left\|\pi^\C_{2m+n}\left(k\exp a(g)\right) P_K^n\right\|_{\text{HS}}^2.\\ \\
 
 \end{array}$$
 
 The expression above is equal to $$|e_n(h^{-1})|^2 \text{Tr}\left((P_K^n)^\ast \circ\pi^\C_{2m+n}\left(k\exp a(g)\right)^\ast \circ\pi^\C_{2m+n}\left(k\exp a(g)\right)\circ P_K^n\right).$$
 By Step-1 and the fact that $\pi_m$ has weights, $-m, -m +2, \cdots, m-2, m$ with weight vectors, $v_{-m}, v_{-m+2},  \cdots,v_{m-2}, v_m$ respectively we have
 
 $$\begin{array}{ll}
 \sum_{j=0}^{2m+n}\left|\langle \pi_{n+2m}^\C(k\exp a(g) h)f^0, f^j\rangle\right|^2 \\ \\
 = |e_n(h^{-1})|^2 \text{Tr}\left(P_K^n \circ \pi^\C_{2m+n}\left(\exp 2a(g)\right) \right)\\ \\
 
  = |e_n(h^{-1})|^2 \sum_{j=0}^{n+2m} \langle P_K^n\circ \pi_{n+2m}^\C(\exp 2a(g)) v_{-2m-n+2j}, v_{-2m-n+2j}\rangle\\ \\
  =|e_n(h^{-1})|^2  \sum_{j=0}^{n+2m} \langle \pi_{n+2m}^\C(\exp 2a(g)) v_{-2m-n+2j}, P_K^n v_{-2m-n+2j}\rangle\\ \\
  =|e_n(h^{-1})|^2  \sum_{j=0}^{n+2m} \langle e^{2(-2m-n+ 2j)t} v_{-2m-n+2j}, P_K^n v_{-2m-n+2j}\rangle\\ \\
  \leq |e_n(h^{-1})|^2 e^{(4m+2n)t}\, \text{Tr}( P_K^n).
 \end{array}$$
 The last inequality follows because $$\langle v_{-2m-n+2j}, P_K^n v_{-2m-n+2j}\rangle=\|P_K^n v_{-2m-n+2j}\|^2\geq 0.$$
 But $$\text{Tr}( P_K^n)=\sum_{i=0}^{n+2m}\langle P_K^n f^i, f^i\rangle=1.$$This implies that $$\sum_{j=0}^{2m+n}\left|\langle \pi_{n+2m}^\C(k\exp a(g) h)f^0, f^j\rangle\right|^2 \leq |e_n(h^{-1})|^2 e^{(4m+2n)t}.$$This completes the proof.
 \end{proof}
 \begin{corollary}\label{cor3}
 For $t\geq 0$, we have $$\left|\psi_{2m+n, n}(\exp\left(\begin{array}{rl}
 -t & 0\\
 0 & t
 \end{array}\right))\right|\leq C \,e^{(2m+n)t}.$$
 \end{corollary}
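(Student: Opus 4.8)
The plan is to deduce this directly from Proposition~\ref{estimate} by recognizing the argument as a Weyl conjugate of an element of $\exp\overline{\mathfrak{a}^+}$. The element in question is $\exp\left(\begin{array}{rl} -t & 0\\ 0 & t\end{array}\right)=\mathrm{diag}(e^{-t},e^{t})$, whereas Proposition~\ref{estimate} is phrased for elements whose $\overline{\mathfrak{a}^+}$-component is $\left(\begin{array}{ll} t & 0\\ 0 & -t\end{array}\right)$ with $t\geq 0$, that is, for $\mathrm{diag}(e^{t},e^{-t})$. These two diagonal matrices differ precisely by the action of the nontrivial Weyl group element, so the whole content of the corollary is to move from the negative to the positive chamber.

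First I would produce the Weyl representative explicitly. A direct multiplication shows that conjugation by $k_{\pi/2}=\left(\begin{array}{rl} 0 & 1\\ -1 & 0\end{array}\right)\in\mathrm{SO}(2)$ interchanges the diagonal entries, so that
$$\exp\left(\begin{array}{rl} -t & 0\\ 0 & t\end{array}\right)=k_{\pi/2}\,\exp\left(\begin{array}{rl} t & 0\\ 0 & -t\end{array}\right)\,k_{\pi/2}^{-1}.$$
Since $t\geq 0$, this exhibits the argument in exactly the form $g=k\,\exp a(g)\,h$ of Proposition~\ref{decomposition-1}, with $k=k_{\pi/2}\in\mathrm{SO}(2)\subset\mathrm{SU}(2)$, $a(g)=\left(\begin{array}{ll} t & 0\\ 0 & -t\end{array}\right)\in\overline{\mathfrak{a}^+}$, and $h=k_{\pi/2}^{-1}=k_{-\pi/2}\in\mathrm{SO}(2)\subset\mathrm{SO}(2,\C)$.

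The key point, and the only step needing any care, is that here $h$ is a genuine compact rotation rather than a general element of $\mathrm{SO}(2,\C)$. Consequently $|e_n(h^{-1})|=|e_n(k_{\pi/2})|=|e^{in\pi/2}|=1$, so the factor $|e_n(h^{-1})|$ appearing in the bound of Proposition~\ref{estimate} simply drops out. Feeding this decomposition into Proposition~\ref{estimate} then yields $\left|\psi_{2m+n,n}(\exp(\mathrm{diag}(-t,t)))\right|\leq C\,e^{(2m+n)t}$, which is the claim.

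As an equivalent route one could bypass the decomposition and use only that $\psi_{2m+n,n}$ is of type $(n,n)$: the conjugation identity above gives $\psi_{2m+n,n}(\mathrm{diag}(e^{-t},e^{t}))=e_n(k_{\pi/2})\,e_n(k_{-\pi/2})\,\psi_{2m+n,n}(\mathrm{diag}(e^{t},e^{-t}))=\psi_{2m+n,n}(\mathrm{diag}(e^{t},e^{-t}))$, the two phases cancelling, after which Proposition~\ref{estimate} applied with trivial $\mathrm{SU}(2)$- and $\mathrm{SO}(2,\C)$-factors ($k=h=e$, whence $|e_n(h^{-1})|=1$ again) finishes the argument. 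I do not anticipate a genuine obstacle: the content is purely the bookkeeping of the sign convention together with the observation that the $\mathrm{SO}(2,\C)$-factor produced by the Weyl element is in fact compact, so that its $e_n$-weight has modulus one.
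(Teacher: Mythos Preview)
Your proposal is correct and follows the same conjugation idea as the paper, which simply records the identity
\[
\left(\begin{array}{rl} 0 & 1\\ 1 & 0\end{array}\right)\left(\begin{array}{rl} e^{t} & 0\\ 0 & e^{-t}\end{array}\right)\left(\begin{array}{rl} 0 & 1\\ 1 & 0\end{array}\right)=\left(\begin{array}{ll} e^{-t} & 0\\ 0 & e^{t}\end{array}\right).
\]
Your choice of $k_{\pi/2}\in\mathrm{SO}(2)$ in place of the paper's permutation matrix (which has determinant $-1$ and so lies in neither $\mathrm{SU}(2)$ nor $\mathrm{SO}(2,\C)$) is in fact tidier, since it fits directly into the decomposition of Proposition~\ref{decomposition-1} and makes the factor $|e_n(h^{-1})|=1$ immediately visible.
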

 This follows because $$\left(\begin{array}{rl}
 0& 1\\
 1 & 0
 \end{array}\right)\left(\begin{array}{rl}
 e^t & 0\\
 0 & e^{-t}
 \end{array}\right)\left(\begin{array}{rl}
 0 & 1\\
 1 & 0
 \end{array}\right)=\left(\begin{array}{ll}
 e^{-t} & 0\\
 0 & e^t
 \end{array}\right).$$
 
 We recall that \begin{equation}\nonumber
 \Phi_{\sigma,\lambda}^{n,n}(a_t) = \int_K e^{-(\lambda +1)H(a_{-t} k_\theta)} e_{-n}\left(K(a_{-t} k_\theta)\right) e_n(k_\theta) \,dk_\theta.
 \end{equation}
 We now extend holomorphically this $ \Phi_{\sigma,\lambda}^{n,n}$ to a subset of $A_\C$ (where $A_\C$ is the complexification of $A$ inside $G_\C$) containing $A$ in the following way:
 
 We have \begin{equation}\nonumber 
 a_{t+is}k_\theta=\left(\begin{array}{lll}
 
 e^{t+is}\cos\theta & e^{t+is} \sin\theta \\
 
 -e^{-t-is}\sin\theta & e^{-t-is}\cos\theta
 
 \end{array}\right).\end{equation}
 Then from (\ref{Iwasawa}) it follows that the function $t\mapsto e^{-(\lambda +1)H(a_t k_\theta)}$ extends holomorphically to a certain neighboorhod of $A$ as \begin{equation} \label{iwasawa-1}
 e^{-(\lambda +1) H(a_{t+is}k_\theta)}=\left( e^{2(t+is)} \cos^2\theta + e^{-2(t+is)}\sin^2\theta\right)^{-\frac{\lambda +1}{2}},
 \end{equation} and the function $t\mapsto e_m(K(a_t k_\theta)$ extends  holomorphically to a certain neighboorhod of $A$ as 
 \begin{equation} \label{iwasawa-2}
 e_m\left(K(a_{t+is}k_\theta)\right)=\left(\frac{e^{t+is}\cos\theta +i e^{t-is}\sin\theta}{\sqrt{e^{2(t+is)}\cos^2\theta  + e^{-2(t+is)}\sin^2\theta}}\right)^{-m}.
 \end{equation} 
 
 We now extend $\Phi_{\sigma, \lambda}^{n,n}(a_{t+is})$ in the following way:
\begin{equation}\label{extension-to-A_C}
 \Phi_{\sigma, \lambda}^{n,n}(a_{t+is})=\int_K e^{-(\lambda +1) H(a_{-t-is}k_\theta)} e_{-n} \left( K(a_{-t-is}k_\theta)\right) e_n(k_\theta) \,dk_\theta .
 \end{equation}
This function is holomorphic for $|s|<\delta_1$ for some $\delta_1>0$. The following proposition gives an estimate of $n$-spherical function on a certain neighboorhood of $A$.
 \begin{proposition}\label{estimate-phi}
For $\sigma\in\what{M}$, $$|\Phi_{\sigma,\lambda_1 + i\lambda_2 }^{n,n}(a_{t+is})|\leq C\,e^{2|n|\,|t|} e^{|\lambda_1|\,|t| + |\lambda_2|\,|s|},$$ for all $t, s\in \R$ with $|s|<\delta$ for some $\delta>0$.
 \end{proposition}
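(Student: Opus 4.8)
The plan is to estimate directly the integrand of the holomorphic extension \eqref{extension-to-A_C} by means of the explicit formulas \eqref{iwasawa-1} and \eqref{Iwasawa}, and then to recognise the surviving $K$--integral as (a controlled perturbation of) the elementary spherical function $\phi_{\Re\lambda}$, for which the sharp bound recorded just before \eqref{estimate-98} is already at our disposal. Writing $\lambda=\lambda_1+i\lambda_2$ and setting
$$w(\theta)=e^{-2(t+is)}\cos^2\theta+e^{2(t+is)}\sin^2\theta,\qquad N(\theta)=e^{-t-is}\cos\theta+ie^{t+is}\sin\theta,$$
where $N(\theta)$ is the numerator $a-ic$ of $e^{i\theta'}$ in \eqref{Iwasawa} for the matrix $a_{-t-is}k_\theta$, the formulas \eqref{iwasawa-1} and \eqref{Iwasawa} identify the modulus of the integrand of \eqref{extension-to-A_C} as
$$\bigl|w(\theta)\bigr|^{-\frac{\lambda_1+1}{2}}\,e^{\frac{\lambda_2}{2}\arg w(\theta)}\,\bigl|N(\theta)/\sqrt{w(\theta)}\bigr|^{-n},$$
since $|e_n(k_\theta)|=1$. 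The proposition will follow once these three factors are controlled uniformly in $\theta$.

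First I would record the elementary two--sided bounds. With $u=e^{-2t}\cos^2\theta$ and $v=e^{2t}\sin^2\theta$ one computes $|w|^2=u^2+v^2+2uv\cos4s$, whence $(u+v)\cos2s\le|w|\le u+v$; since $u+v$ is a convex combination of $e^{-2t}$ and $e^{2t}$ this gives $e^{-2|t|}\cos2s\le|w|\le e^{2|t|}$. Likewise $|N|^2=\cosh2t-\sinh2t\cos2\theta-\sin2s\sin2\theta$, so
$$\min_\theta|N|^2=\cosh2t-\sqrt{\sinh^22t+\sin^22s}=\frac{\cos^22s}{\cosh2t+\sqrt{\sinh^22t+\sin^22s}},$$
which yields $\tfrac{\cos2\delta}{\sqrt2}\,e^{-|t|}\le|N|\le\sqrt2\,e^{|t|}$. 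All these bounds require $\cos2s>0$, and this is exactly the origin of the hypothesis $|s|<\delta$: one must take $\delta<\pi/4$. With them in hand $\bigl|N/\sqrt w\bigr|^{-n}\le C\,e^{2|n||t|}$ uniformly in $\theta$ (for a constant depending on $n$ and $\delta$), while from $\Re w=(u+v)\cos2s$ and $\Im w=(v-u)\sin2s$ one gets $|\arg w|\le2|s|$ and hence $e^{\frac{\lambda_2}{2}\arg w}\le e^{|\lambda_2||s|}$.

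The decisive factor is $|w|^{-\frac{\lambda_1+1}{2}}$ integrated over $K$. A crude pointwise majorisation using only $|w|\ge e^{-2|t|}\cos2s$ would produce $e^{(|\lambda_1|+1)|t|}$, which overestimates the claimed growth by a factor $e^{|t|}$: the $\rho$--shift is lost. To recover it I would keep the $K$--integral intact and compare with the real case, using $|w|\ge(u+v)\cos2s$ when $\lambda_1+1\ge0$ and $|w|\le u+v$ when $\lambda_1+1<0$, to obtain
$$\int_K\bigl|w(\theta)\bigr|^{-\frac{\lambda_1+1}{2}}\,dk_\theta\le(\cos2\delta)^{-\frac{|\lambda_1+1|}{2}}\int_K(u+v)^{-\frac{\lambda_1+1}{2}}\,dk_\theta=(\cos2\delta)^{-\frac{|\lambda_1+1|}{2}}\,\phi_{\lambda_1}(a_t),$$
because $\int_K(u+v)^{-\frac{\lambda_1+1}{2}}dk_\theta$ is precisely $\phi_{\lambda_1}(a_t)$, the $n=0$, $s=0$ instance of \eqref{iwasawa-1}. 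The known bound $\phi_{\lambda_1}(a_t)\le C(1+|t|)\,e^{(|\lambda_1|-1)|t|}$ then gives, after absorbing the bounded quantity $(1+|t|)e^{-|t|}$, the estimate $\int_K|w|^{-\frac{\lambda_1+1}{2}}dk_\theta\le C\,e^{|\lambda_1||t|}$. Combining the three factors yields
$$|\Phi_{\sigma,\lambda}^{n,n}(a_{t+is})|\le C\,e^{2|n||t|}\,e^{|\lambda_2||s|}\int_K\bigl|w(\theta)\bigr|^{-\frac{\lambda_1+1}{2}}\,dk_\theta\le C\,e^{2|n||t|}\,e^{|\lambda_1||t|+|\lambda_2||s|},$$
the constant absorbing the $\delta$-- and $n$--dependent factors together with the harmless $(\cos2\delta)^{-\frac{|\lambda_1+1|}{2}}$. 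I expect the main obstacle to be precisely this last step: a termwise bound of the integrand is too lossy and spoils the exponential rate, so the essential point is to retain the $K$--average and identify it with the genuine spherical function, for which the sharp estimate is available. The accompanying technical difficulty is the uniform control of $|N|$ and $\arg w$, which is what both forces and is made possible by the sector restriction $|s|<\delta<\pi/4$.
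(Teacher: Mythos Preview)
Your approach is genuinely different from the paper's. The paper does not estimate the integral \eqref{extension-to-A_C} directly; instead it invokes Koornwinder's identity
\[
\Phi_{\sigma,\lambda}^{n,n}(a_t)=(\cosh t)^{2n}\,\phi_\lambda^{(0,2n)}(t),
\]
extends both sides holomorphically in $t$, and then appeals to the uniform estimate of Ho--\'Olafsson for Heckman--Opdam hypergeometric functions, $|\phi_{\lambda_1+i\lambda_2}^{(0,2n)}(t+is)|\le C\,e^{|\lambda_1||t|+|\lambda_2||s|}$, together with $|\cosh(t+is)|\le e^{|t|}$. The case $n<0$ is reduced to $n\ge0$ by the conjugation symmetry $\overline{\Phi_{\sigma,\lambda}^{n,n}(a_{t+is})}=\Phi_{\sigma,\bar\lambda}^{-n,-n}(a_{t-is})$.

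Your direct attack is attractive and the elementary two-sided bounds on $|w|$, $|N|$ and $\arg w$ are all correct. The gap is in the very last line: the factor $(\cos 2\delta)^{-|\lambda_1+1|/2}$ is \emph{not} harmless. It equals $e^{c|\lambda_1+1|}$ with $c=-\tfrac12\log\cos 2\delta>0$, and for fixed $\delta>0$ this grows exponentially in $|\lambda_1|$; it cannot be absorbed into a constant $C$ independent of $\lambda$, nor into $e^{|\lambda_1||t|}$ since $|t|$ may be small. Thus your argument yields only
\[
|\Phi_{\sigma,\lambda_1+i\lambda_2}^{n,n}(a_{t+is})|\le C\,e^{c|\lambda_1|}\,e^{2|n||t|}\,e^{|\lambda_1||t|+|\lambda_2||s|},
\]
which is weaker than the stated proposition. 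This is exactly why the paper routes through the Heckman--Opdam machinery: the Ho--\'Olafsson estimate delivers a constant that is genuinely uniform in $\lambda$. It is worth noting, though, that in the applications within this paper (the contour on $i\R$ and the shift to $\Re\lambda=-\eta$ with $0\le\eta<\delta\le1$) the real part $\lambda_1$ stays bounded, so your weaker bound would in fact suffice there; but it does not establish the proposition as written.
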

 \begin{proof}
  \noindent{\bf Case-1:} Let $n$ be a non-negative integer. We have (see \cite[(4.17)]{Koornwinder}) that for $t\in\R$ $$\Phi_{\sigma,\lambda}^{n,n}(a_t)=(\cosh t)^{2n} \phi_\lambda^{(0, 2n)}(t),$$ where $\phi_\lambda^{(0,2n)}$ is the Jacobi function given by $$\phi_\lambda^{(0,2n)}(t)= _2F_1(n+
 \frac{1-\lambda}{2}, n+
 \frac{1-\lambda}{2}, 1, -\sinh^2 t).$$ 
 
 The hypergeometric function associated to the root system $BC_1=\{-\alpha,\alpha, -2\alpha, 2\alpha\}$ is the classical Gauss hypergeometric functions $\phi_\lambda^{(a, b)}$ with $a=\frac{m_\alpha + m_{2\alpha}-1}{2}$ and $b=\frac{m_{2\alpha}-1}{2}$. It is  known by Opdam \cite[Theorem 3.15]{Opdam} that for each $\lambda\in\C$, the function $\phi_\lambda^{(0, 2n)}$ has a holomorphic extension to $\{t+is \in\C \mid  |s|<\delta_2\}$ for some $\delta_2>0$. Therefore, there exists $\delta>0$ such that for $t\in\R$ and $|s|<\delta$, $$\Phi_{\sigma,\lambda}^{n,n}(a_{t + is})=(\cosh (t+ is))^{2n} \phi_\lambda^{(0, 2n)}(t+is).$$
 
 Since, $m_{2\alpha}\geq 0$ and $m_\alpha + m_{2\alpha}\geq 0$ (i.e.  $a\geq -\frac 12, b\geq -\frac 12$) by  Ho and {\'O}lafsson (see \cite[Prop. A.6]{Ho}) we have $$\left|\phi_{\lambda_1 +i\lambda_2}^{(0,2n)}(t+is)\right|\leq C e^{|\lambda_2|\,|s| + |\lambda_1|\,|t|}.$$ Hence we have $$\left|\Phi_{\sigma, \lambda_1 + i\lambda_2}^{n,n}(a_{t+is})\right|\leq e^{2n t} e^{|\lambda_1|\,|t| + |\lambda_2|\,|s|}.$$
 
 \noindent{\bf Case-2:} Let $n$ be a negative integer. It follows from (\ref{iwasawa-1}), (\ref{iwasawa-2})) that $$\overline{e^{-(\lambda +1) H(a_{t+is}k_\theta)}}=e^{-(\overline{\lambda} +1) H(a_{t-is}k_\theta)},$$ and $$\overline{e_m\left(K(a_{t+is}k_\theta)\right)}=e_{-m}\left(K(a_{t-is}k_\theta)\right).$$
 Hence $$\overline{\Phi_{\sigma, \lambda}^{n,n}}(a_{t+is})=\Phi_{\sigma, \overline{\lambda}}^{-n,-n}(a_{t-is}).$$Therefore from Case-1, it follows that $$\left|\Phi_{\sigma, \lambda}^{n,n}(a_{t+is})\right|=\left|\overline{\Phi_{\sigma, \lambda}^{n,n}(a_{t+is})}\right|=\left|\Phi_{\sigma, \overline{\lambda}}^{-n,-n} (a_{t-is})\right|\leq e^{-2nt} e^{|\lambda_1|\,|t| + |\lambda_2|\,|s|}.$$
 \end{proof}

Let $A, p, \delta$ be real numbers such that $A<\frac{\pi}{2}, p>0$ and $0<\delta\leq 1$. Let $$\mathcal H(\delta)=\{\lambda\in\C\mid \Re\lambda>-\delta\},$$ and $$\mathcal H(A, p, \delta)=\left\{a:\mathcal H(\delta)\rightarrow\C \text{ holomorphic }\mid |a(\lambda)|\leq C e^{-p(\Re\lambda) + A|\Im\lambda|} \text{ for all }\lambda\in\mathcal H(\delta)\right\}.$$
We now present an analogue of Ramanujan's Master theorem alluded to introduction.
\begin{theorem}\label{Ramanujan-even-sl2}
Let $n$ be a positive integer and $a\in \mathcal H(A, p, \delta)$ with $A, p, \delta$ as above. Then
\begin{enumerate}
\item The $(n,n)$-type spherical Fourier series \begin{equation} \label{eqn-29}
f(x)=\sum_{m=0}^\infty (-1)^{m+1}(2m + n+1) a(2m+1+n)\psi_{n+2m, n}(x),
\end{equation} converges on a compact subset of $\Omega_p:= \mathrm{SO}(2) \left\{\left(\begin{array}{ll}
e^{it} & 0 \\
0 & e^{-it}
\end{array}\right)\mid t\in\R \text{ with } |t|<p\right\} \mathrm{SO}(2)$ and defines
a holomorphic function on a neighboorhood \begin{equation}\nonumber
\Omega_p'=\mathrm{SU}(2) \left\{\left(\begin{array}{ll}
e^{it} & 0 \\
0 & e^{-it}
\end{array}\right)\mid t\in\C \text{ with } |\Im t|<p\right\} \mathrm{SO}(2,\C), \text{ of } \Omega_p \text{ in } G_\C.
\end{equation} 

\item Let $\eta\in \R$ with $0\leq \eta<\delta$.  Then 
$$f(a_t)=\frac 12\int_{-\eta-i\infty}^{-\eta+i\infty}\left(a(\lambda) b(\lambda) + a(-\lambda) b(-\lambda)\right)\Phi_{\sigma, \lambda}^{(n,n)}(a_t)\mu(\sigma, \lambda)\,d\lambda $$ \begin{equation} \label{eqn-30} + \sum_{k\in L_\sigma^{n,n}} (-1)^{\frac{k-n-1}{2}} k \,a(k)\Psi_{k} (a_t), \end{equation} for $t\in\R$ with $|t|<p$, where $b(\lambda)$ is defined by 
\begin{equation}\nonumber
b(\lambda)\mu(\sigma, \lambda) =\left\{ \begin{array}{lll}
\left(\frac{i}{4}(-1)^{-\frac n2 +1}\right)\frac{\lambda}{\cos\frac{\pi\lambda}{2}} &  \text{ if }  n \text{ is even, } \\  
 \left(\frac{i}{4}(-1)^{-[\frac n2] +1}\right)\frac{\lambda}{\sin\frac{\pi\lambda}{2}} & \text{ if }   n \text{ is odd.}
\end{array}\right.
\end{equation}
The integral above is independent of $\eta$ and converges uniformly on compact subsets of $\{a_t\mid t\in\R\}$ and extends as a $(n,n)$-type function on a neighboorhood of $G$ in $G_\C$.

\item The extension of $f$ to $G$ satisfies the following:  \begin{equation}\nonumber
\frac{1}{4\pi^2}\int_{G}f(x)\Phi_{\sigma, \lambda}^{n,n}(x^{-1})\,dx=\frac 12\left( a(\lambda) b(\lambda) + a(-\lambda) b(-\lambda)\right),\hspace{.1in} \lambda\in i\R,
\end{equation} and for $k\in L_\sigma^{n,n}$
 \begin{equation}\nonumber
 \frac{1}{2\pi}\int_{G}f(x)\Psi_{k}^{n,n}(x^{-1})\,dx =(-1)^{\frac{k-n-1}{2}} a(k).
 \end{equation} 
\end{enumerate}
\end{theorem}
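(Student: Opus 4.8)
The plan is to prove the three assertions in order, the contour-integral identity (\ref{eqn-30}) of part (2) being the analytic heart of the matter; part (1) reduces to growth estimates already in hand, and part (3) is read off from part (2) through the Plancherel inversion formula for $(n,n)$-type functions. For part (1) I would estimate the general term of (\ref{eqn-29}) directly. On $G$ the spherical parameter is $\lambda=2m+n+1>0$, so $a\in\mathcal H(A,p,\delta)$ gives $|a(2m+1+n)|\le Ce^{-p(2m+1+n)}$, while Corollary \ref{cor3} gives $|\psi_{n+2m,n}(a_t)|\le Ce^{(2m+n)|t|}$; the $m$-th term is therefore $O\!\big((2m+n+1)\,e^{(2m+n)(|t|-p)}\big)$, geometrically summable for $|t|<p$. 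The holomorphic extension to $\Omega_p'$ is handled identically, with Corollary \ref{cor3} replaced by the sharper bound of Proposition \ref{estimate} on $G_\C=U_\C$ (through the decomposition of Proposition \ref{decomposition-1}), the condition $|\Im t|<p$ now playing the role of $|t|<p$, and uniform convergence on compacta yielding holomorphy of the sum.

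Part (2) is the crux. Since $\Phi^{n,n}_{\sigma,\lambda}$ and $\mu(\sigma,\lambda)$ are even in $\lambda$, the substitution $\lambda\mapsto-\lambda$ turns the $a(-\lambda)b(-\lambda)$ summand into the integral of $a(\lambda)b(\lambda)\Phi^{n,n}_{\sigma,\lambda}\mu(\sigma,\lambda)$ over the line $\Re\lambda=\eta$; because the numerator factor $\lambda$ in $b\mu$ kills the only candidate pole at $\lambda=0$, the strip $-\eta<\Re\lambda<\eta$ contains no pole and the two halves coalesce into a single integral $\int_{\Re\lambda=\eta}a(\lambda)b(\lambda)\Phi^{n,n}_{\sigma,\lambda}(a_t)\mu(\sigma,\lambda)\,d\lambda$. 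I would then push this contour to $\Re\lambda=R$ and let $R\to\infty$. The decay needed to discard the far contour and the horizontal segments is furnished by Proposition \ref{estimate-phi}: along vertical lines $|b\mu|\lesssim|\lambda|\,e^{-\frac{\pi}{2}|\Im\lambda|}$, coming from the poles of $1/\cos(\tfrac{\pi\lambda}{2})$ for $n$ even and of $1/\sin(\tfrac{\pi\lambda}{2})$ for $n$ odd, and this beats the factor $e^{A|\Im\lambda|}$ of $a$ precisely because $A<\tfrac{\pi}{2}$, while the decay $e^{-pR}$ of $a$ beats the growth $e^{R|t|}$ of $\Phi^{n,n}_{\sigma,\lambda}(a_t)$ exactly when $|t|<p$. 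This is where both the hypothesis on $A$ and the restriction $|t|<p$ are consumed.

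Sweeping across $\Re\lambda>0$ the contour collects the residues at the simple poles $\lambda=2m+n+1$ ($m\ge0$) and at the finitely many $\lambda=k\in L^{n,n}_\sigma$ lying in $(0,n)$. Using $\Phi^{n,n}_{\sigma,2m+n+1}=\psi_{n+2m,n}$ (Theorem \ref{phi-equal-psi}) and the explicit residues of $1/\cos(\tfrac{\pi\lambda}{2})$ and $1/\sin(\tfrac{\pi\lambda}{2})$, the constants built into $b$ are calibrated so that the residue at each continuous pole reproduces the corresponding term of (\ref{eqn-29}); using $\Phi^{n,n}_{\sigma,k}=\Psi^{n,n}_k$ for $k\in L^{n,n}_\sigma$ (equation (\ref{relation-phi-psi}), valid because $n>k$ forces $n\in\Z(k)$), the residues at the discrete poles produce the $\Psi_k$-terms. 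After accounting for the orientation sign incurred by closing to the right, rearrangement yields (\ref{eqn-30}), and independence of $\eta\in[0,\delta)$ follows since no pole is crossed as $\eta$ varies in that range. The main obstacle lives here: one must control the vertical decay ($A<\tfrac{\pi}{2}$) and the horizontal decay ($|t|<p$) simultaneously and then keep the residue bookkeeping straight --- infinitely many continuous poles feeding the series and finitely many discrete poles feeding the $\Psi_k$-sum --- so that every sign and constant matches (\ref{eqn-29}) and (\ref{eqn-30}).

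For part (3) I would read off the two transforms of $f$ from the representation just obtained. The continuous density $\tfrac12\big(a(\lambda)b(\lambda)+a(-\lambda)b(-\lambda)\big)$ decays like $e^{(A-\frac{\pi}{2})|\Im\lambda|}$ on $i\R$ and so lies in $\mathcal S(i\R)_e$, while the discrete data are finitely supported, so the Plancherel isomorphism $f\mapsto(\what f_H,\what f_B)$ places $f$ in $\mathcal C^2(G)_{n,n}$. Comparing part (2) at $\eta=0$ with the inversion formula for $(n,n)$-type functions and invoking injectivity of the transform forces $\what f_H(\sigma,\lambda)=2\pi^2\big(a(\lambda)b(\lambda)+a(-\lambda)b(-\lambda)\big)$ and $\what f_B(k)=2\pi(-1)^{(k-n-1)/2}a(k)$ for $k\in L^{n,n}_\sigma$; dividing by the normalising constants $4\pi^2$ and $2\pi$, and recalling that $\int_Gf(x)\Phi^{n,n}_{\sigma,\lambda}(x^{-1})\,dx=\what f_H(\sigma,\lambda)$ and $\int_Gf(x)\Psi^{n,n}_k(x^{-1})\,dx=\what f_B(k)$, yields precisely the two identities of part (3).
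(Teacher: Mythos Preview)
Your proposal is correct and follows essentially the same route as the paper: part~(1) via the growth bound of Corollary~\ref{cor3}/Proposition~\ref{estimate}, part~(2) by shifting the contour of $\int a(\lambda)b(\lambda)\Phi_{\sigma,\lambda}^{n,n}(a_t)\mu(\sigma,\lambda)\,d\lambda$ to the right, collecting the residues at the odd integers and splitting them into those below $n$ (discrete series via (\ref{relation-phi-psi})) and those $\ge n+1$ (compact spherical functions via Theorem~\ref{phi-equal-psi}), and part~(3) by recognising the continuous density as an even Schwartz function on $i\R$ and invoking the inversion formula. The only cosmetic difference is that the paper first computes the single integral $I=\int_{i\R}a(\lambda)b(\lambda)\Phi\mu\,d\lambda$ along rectangles with vertices at $(0,\pm 2ki)$, $(2k,\pm 2ki)$ (so the vertical side $\Re\lambda=2k$ avoids the poles at odd integers) and only afterwards observes that the symmetrised integrand has the same value on $i\R$, whereas you symmetrise first and then push; you should likewise take your moving line $\Re\lambda=R$ through even integers to stay clear of the poles, and for the Schwartz claim in part~(3) note, as the paper does, that $a(\lambda)b(\lambda)+a(-\lambda)b(-\lambda)=\big(a(\lambda)-a(-\lambda)\big)b(\lambda)$ is genuinely holomorphic across $\lambda=0$ before appealing to Cauchy estimates for the derivatives.
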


To prove the theorem we need the following elementary estimates:
\begin{lemma}\label{lemma-1}
\begin{enumerate}
\item $|\cos \frac{\pi}{2}(s + ir)| \geq e^{\frac{\pi}{2}r}$ if $r$ is away from $0$.
\item $|\cos\frac{\pi}{2}(2k + is)|\geq e^{\frac{\pi}{2}s}$ for all $s$.
\item $|\cos \frac{\pi}{2}(s - i2k)| \geq e^{k\pi}$ for $k\geq 1$.
\item On a small neighboorhood of $i\R$, $|\cos \frac{\pi}{2}(s + ir)| \geq e^{\frac{\pi}{2}|r|}$ if $r$ is away from $0$.
\end{enumerate}
\end{lemma}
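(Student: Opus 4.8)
The plan is to reduce all four inequalities to the single elementary identity
\[
|\cos(x+iy)|^2 = \cos^2 x + \sinh^2 y, \qquad x,y\in\R,
\]
which follows immediately from $\cos(x+iy)=\cos x\cosh y-i\sin x\sinh y$ together with $\cosh^2 y=1+\sinh^2 y$. Beyond this I would use only the two elementary facts $|\sinh u|=\tfrac12 e^{|u|}(1-e^{-2|u|})$ and $\cosh u\geq\tfrac12 e^{|u|}$. With these in hand each part becomes a matter of reading off the real and imaginary parts of the argument $\tfrac{\pi}{2}(\cdots)$ and substituting; the positive multiplicative constants produced this way are harmless and may be absorbed into the generic constant $C$, the content of each estimate being the exponential rate.

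For (1) the argument $\tfrac{\pi}{2}(s+ir)$ has real part $\tfrac{\pi s}{2}$ and imaginary part $\tfrac{\pi r}{2}$; discarding the nonnegative term $\cos^2\tfrac{\pi s}{2}$ in the identity leaves $|\cos\tfrac{\pi}{2}(s+ir)|\geq|\sinh\tfrac{\pi r}{2}|$, and once $r$ is bounded away from $0$ the factor $1-e^{-\pi|r|}$ is bounded below, producing exponential growth in $|r|$ (hence in particular the stated bound, since $|r|\geq r$). Part (3) is the same computation with imaginary part $-\pi k$: one gets $|\cos\tfrac{\pi}{2}(s-2ik)|\geq\sinh(\pi k)$, and for the integer $k\geq 1$ the quantity $\sinh(\pi k)$ already grows like $e^{\pi k}$ with a uniform constant. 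Part (2) needs no hypothesis on $s$, because the real part of $\tfrac{\pi}{2}(2k+is)$ is the integer multiple $\pi k$ of $\pi$; thus $\cos^2(\pi k)=1$ and the identity collapses to the exact equality $|\cos\tfrac{\pi}{2}(2k+is)|=\cosh\tfrac{\pi s}{2}$, from which the bound is immediate.

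Part (4) is the only one requiring a little care, since it is the robust version of (1), valid for both signs of $r$ when the argument is allowed to range over a full (complex) neighbourhood of $i\R$ rather than along a single line; here one can neither set $\cos^2 x=1$ nor always afford to discard it. Writing the (small) real part as $\tfrac{\pi s_1}{2}$ and the imaginary part as $\tfrac{\pi}{2}(r+s_2)$ with $s_1,s_2$ small, the same identity gives $|\cos|^2\geq\cos^2\tfrac{\pi s_1}{2}+\sinh^2\tfrac{\pi}{2}(r+s_2)$; choosing the neighbourhood small enough relative to the distance of $r$ from $0$ keeps $\cos^2\tfrac{\pi s_1}{2}$ bounded below and keeps $r+s_2$ away from $0$ with the sign of $r$, so that $\sinh^2\tfrac{\pi}{2}(r+s_2)\gtrsim e^{\pi|r|}$. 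I expect this calibration of the neighbourhood to be the only genuine obstacle, and it is cleanest to phrase it as the persistence of the estimate in (1) under a sufficiently small perturbation of the real and imaginary parts, the governing identity being unchanged. Everything else is direct substitution into the master identity.
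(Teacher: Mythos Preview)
Your approach via the identity $|\cos(x+iy)|^2=\cos^2 x+\sinh^2 y$ is correct and is precisely what the paper intends: the lemma is presented there as ``elementary estimates'' and is given no proof at all. Parts (1)--(3) go through exactly as you describe, and you are right to flag that the inequalities as literally stated hold only up to a positive multiplicative constant; this is how they are actually used in the contour estimates that follow.

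Your handling of (4), however, is over-engineered and rests on a misreading. In the statement $s$ and $r$ are real, so $s+ir$ already ranges over all of $\C$; the phrase ``on a small neighbourhood of $i\R$'' just means $|s|$ is small. But the inequality you obtained in (1), namely
\[
\bigl|\cos\tfrac{\pi}{2}(s+ir)\bigr|\;\geq\;\bigl|\sinh\tfrac{\pi r}{2}\bigr|\;\geq\;C\,e^{\pi|r|/2}\qquad(\text{$r$ bounded away from }0),
\]
was derived by discarding the nonnegative term $\cos^2\tfrac{\pi s}{2}$ and hence is valid for \emph{every} real $s$, with no smallness assumption. Thus (4) is literally the same computation as (1), just recording the bound with $|r|$ rather than $r$; there is no need to introduce perturbations $s_1,s_2$, to worry about ``keeping $r+s_2$ away from $0$'', or to calibrate the neighbourhood at all. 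The neighbourhood hypothesis in the paper's formulation is simply superfluous for the proof.
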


\begin{proof}[Proof of Theorem \ref{Ramanujan-even-sl2}:]
We first assume that $n$ is an even positive integer. In this case, proof of $(1)$ follows straight away from the Corollary \ref{cor3}.

For the proof of $(2)$, we note that the function $b(\lambda)\mu(\sigma^+, \lambda)=c\frac{\lambda}{\cos\frac{\pi\lambda}{2}}$, where $c=\left(\frac{i}{4}(-1)^{-\frac n2 +1}\right)$ has simple poles at $\lambda=\pm 1, \pm 3, \pm 5, \cdots$ and it is easy to check that $$\text{Res}_{\lambda=2j+1} \left(b(\lambda) \mu(\sigma^+, \lambda)\right)=(-1)^{j+1} \frac{2c}{\pi}(2j+1).$$Also it is easy to see from Lemma \ref{lemma-1} that for $\lambda$ on imaginary axis, \begin{equation}\nonumber
 |b(\lambda)\mu(\sigma^+, \lambda)|\leq |\lambda| e^{-\frac{\pi}{2}|\Im\lambda|}. 
\end{equation}
It follows from (\ref{plancherel-measure-sl2}), that for $\lambda\in\C$ 
 \begin{equation} \nonumber b(\lambda)=\frac{-2ci}{\pi}\frac{1}{\sin(\frac{\pi\lambda}{2})}.\end{equation} Clearly this function has simple poles at $\lambda=0, \pm 2, \pm 4, \cdots$. 

We now consider the integral 
\begin{equation}\nonumber 
I=\int_{-i\infty}^{+i\infty} a(\lambda) b(\lambda)\Phi_{\sigma^+, \lambda}^{n,n}(a_t)\mu(\sigma^+, \lambda)\,d\lambda.\end{equation}  By choosing the rectangular path $\gamma_k$ joining the vertices $(0,  2ki), (2k, 2ki), (2k, -2ki), (0, -2ki)$ and observing that the poles of the integrand are situated at the points  $\lambda=1, 3, 5, \cdots, 2k-1$ inside the path $\gamma_k$ it follows that \vspace{-.08in}$$\begin{array}{ll} & \int_{\gamma_r} a(\lambda) b(\lambda)\Phi_{\sigma^+, \lambda}^{(n,n)}(a_t)\mu(\sigma^+, \lambda)\,d\lambda\\
 =& 2\pi i \sum_{m=0}^{k-1} a(2m+1)\Phi_{\sigma^+, 2m+1}^{(n,n)} (a_t)\text{ Res }_{\lambda = 2m+1}\left(b(\lambda) \mu(\sigma^+, \lambda)\right) \\ 
=& 2\pi i\sum_{m=0}^{k-1} (-1)^{m+1} \frac{2c}{\pi}(2m+1) \,a(2m+1)\Phi_{\sigma^+, 2m+1}^{(n,n)} (a_t) .
\end{array}$$

 For $t\in\R$ with $|t|<p$, we have $$\begin{array}{ll}
\int_{2ki}^{2k+2ki} |a(\lambda)|\, |b(\lambda)|\,|\Phi_{\sigma^+, \lambda}^{(n,n)}(a_t)|\mu(\sigma^+, \lambda)\,d\lambda\\ \\  
=\int_0^{2k} |a(s+2ki)|\,|\Phi_{\sigma^+, s+2ki}^{(n,n)}(a_t)|\, |b(s+2ki)|\,\mu(\sigma^+, s+2ki)\,ds & \\ \\
\leq \int_0^{2k} e^{-ps + A2k} e^{(s-1)t} (s+2k) e^{-k\pi}\,ds \, (\text{because of } (\ref{estimate-98}) \text{ and Lemma } \ref{lemma-1})\\ \\
\leq C k^2 e^{(A-\frac{\pi}{2})2k}.
\end{array}$$
This last quantity goes to $0$ as $k\rightarrow \infty$.

We now consider the integral \begin{equation}\label{eqn-91} \int_{2k + i2k}^{2k-i2k} |a(\lambda)|\, |b(\lambda)|\,|\Phi_{\sigma^+, \lambda}^{(n,n)}(a_t)|\mu(\sigma^+, \lambda)\,d\lambda.
\end{equation}
 Let $t\geq 0$ with $t<p$.  Now,  $$\begin{array}{ll}
\int_{2k - i2k}^{2k+i2k} |a(\lambda)|\, |b(\lambda)|\,|\Phi_{\sigma^+, \lambda}^{(n,n)}(a_t)|\mu(\sigma^+, \lambda)\,d\lambda\\ \\  
=\int_{-2k}^{2k} |a(2k +is )|\,|\Phi_{\sigma^+, 2k+is}^{(n,n)}(a_t)|\, |b(2k +is)|\,\mu(\sigma^+, 2k +is)\,ds & \\ \\
\leq \int_{-2k}^{2k} e^{-2pk + A|s|} e^{(2k-1)t} (2k + |s|) e^{-\frac{\pi}{2}|s|}\,ds  \,(\text{because of } (\ref{estimate-98}) \text{ and Lemma } \ref{lemma-1})\\ \\
=2\int_{0}^{2k} e^{-2pk + A|s|} e^{(2k-1)t} (2k + |s|) e^{-\frac{\pi}{2}|s|}\,ds\\ \\
\leq C k e^{-2kt}\int_{0}^{2k}  e^{(A-\frac \pi 2)|s|}\,ds.
\end{array}$$
The last quantity goes to $0$ as $k\rightarrow \infty$. Hence the integral in (\ref{eqn-91}) goes to zero as $k\rightarrow\infty$, for $t\in\R$ with $|t|<p$.

Next,  $$\begin{array}{lll} 
\int_{2k - i2k}^{-i2k} |a(\lambda)|\, |b(\lambda)|\,|\Phi_{\sigma^+, \lambda}^{(n,n)}(a_t)|\mu(\sigma^+, \lambda)\,d\lambda \\ \\
 
=\int_{2k}^{0} |a(s-i2k)|\, |b(s-i2k)|\,|\Phi_{\sigma^+, s-i2k}^{(n,n)}(a_t)|\mu(\sigma^+, s-i2k)\,ds \\ \\
   
\leq \int_{2k}^{0} e^{-ps + A 2k} e^{(s-1)t}(|s| + 2k) e^{-k\pi}\,ds  \,(\text{because of } (\ref{estimate-98}) \text{ and Lemma } \ref{lemma-1})\\ \\
\leq  C k e^{(A-\frac{\pi}{2})2k}.
\end{array}$$
This last quantity goes to zero as $k$ goes to infinity. Hence we have 
$$ I=2\pi i\sum_{m=0}^{\infty} (-1)^{m+1} \frac{2c}{\pi}(2m+1) \,a(2m+1)\Phi_{\sigma^+, 2m+1}^{(n,n)} (a_t).$$
We now write $$I=I_1 + I_2,$$ where $$\begin{array}{lll} I_1 &=& 2\pi i\sum_{m=0}^{\frac{n-2}{2}} (-1)^{m+1} \frac{2c}{\pi}(2m+1) \,a(2m+1)\Phi_{\sigma^+, 2m+1}^{(n,n)} (a_t),\\ \\
I_2 & =& 2\pi i\sum_{m=\frac{n}{2}}^{\infty} (-1)^{m+1} \frac{2c}{\pi}(2m+1) \,a(2m+1)\Phi_{\sigma^+, 2m+1}^{(n,n)} (a_t).\end{array}$$ 
We know that for $m\leq \frac{n-2}{2}$ that is, for $2m+1< n$ (see (\ref{relation-phi-psi})), $$\Phi_{\sigma^+, 2m+1}^{(n,n)}(a_t)=\Psi_{2m+1}(a_t).$$ Therefore $$\begin{array}{lll} I_1&=& 2\pi i\sum_{m=0}^{\frac{n-2}{2}} (-1)^{m+1} \frac{2c}{\pi}(2m+1) \,a(2m+1)\Psi_{2m+1}(a_t)\\ \\
&=& \sum_{m=0}^{\frac{n-2}{2}} (-1)^{m+1} 4ci (2m+1) \,a(2m+1)\Psi_{2m+1}(a_t).\end{array}$$
We recall from Theorem \ref{phi-equal-psi} that for $m\geq 0$,  $$\Phi_{\sigma^+, n+ 2m+1}^{(n,n)}(a_t)=\psi_{n+2m,n}(a_t).$$Therefore $$\begin{array}{lll}
I_2&=& 2\pi i\sum_{m=0}^{\infty} (-1)^{m+1+\frac n2} \frac{2c}{\pi}(2m+1+n) \,a(n+2m+1)\psi_{n+2m,n}(a_t)\\ \\
&=& \sum_{m=0}^{\infty} (-1)^{m+1+\frac n2} 4 ci (2m+1+n) \,a(n+2m+1)\psi_{n+2m,n}(a_t).
\end{array}$$
We have choosen $c$ such that $4ci (-1)^{\frac{n}{2}}=1$. Therefore, for $t\in\R$ with $|t|<p$, $$f(a_t)=\frac 12\int_{-i\infty}^{+i\infty}\left(a(\lambda) b(\lambda) + a(-\lambda) b(-\lambda)\right)\Phi_{\sigma^+, \lambda}^{(n,n)}(a_t)\mu(\sigma^+, \lambda)\,d\lambda $$ $$+ \sum_{j=0}^{\frac{n-2}{2}} (-1)^{j- \frac n2} (2j+1) a(2j +1)\,\Psi_{2j+1} (a_t).$$  From Proposition \ref{estimate-phi}, Lemma \ref{lemma-1} and using the fact that $A<\frac{\pi}{2}$, it is clear that
the integral $$\int_{-i\infty}^{+i\infty}\left(a(\lambda) b(\lambda) + a(-\lambda) b(-\lambda)\right)\Phi_{\sigma^+, \lambda}^{(n,n)}(a_t)\mu(\sigma^+, \lambda)\,d\lambda,$$ exists and holomorphic for all $t\in\C$ with $|\Im t|<\delta_1$ for some $\delta_1>0$.  Therefore using (\ref{relation-phi-psi}) and (\ref{extension-to-A_C}) it follows that $f(a_t)$ has a holomorphic extension for all $t\in\C$ with $|\Im t|<\delta_2$ for some $\delta_2>0$. Hence $$f(a_t)=\frac 12\int_{-i\infty}^{+i\infty}\left(a(\lambda) b(\lambda) + a(-\lambda) b(-\lambda)\right)\Phi_{\sigma^+, \lambda}^{(n,n)}(a_t)\mu(\sigma^+, \lambda)\,d\lambda $$ $$+ \sum_{j=0}^{\frac{n-2}{2}} (-1)^{j- \frac n2} (2j+1) a(2j +1)\,\Psi_{2j+1} (a_t),$$ for $t\in\C$, $|\Im t|<\delta_2$.

Using Cauchy integral formula and using the estimates of $\Phi_{\sigma, \lambda}^{n,n}, b(\lambda)\mu(\sigma, \lambda)$ and  $a(\lambda)$ we can prove  that for $0<\eta<\delta (\leq 1)$, $$\begin{array}{ll}\frac 12\int_{-i\infty}^{+i\infty}\left(a(\lambda) b(\lambda) + a(-\lambda) b(-\lambda)\right)\Phi_{\sigma^+, \lambda}^{(n,n)}(a_t)\mu(\sigma^+, \lambda)\,d\lambda \\ \\
=\frac 12\int_{-\eta-i\infty}^{-\eta+i\infty}\left(a(\lambda) b(\lambda) + a(-\lambda) b(-\lambda)\right)\Phi_{\sigma^+, \lambda}^{(n,n)}(a_t)\mu(\sigma^+, \lambda)\,d\lambda.\end{array} $$

For the proof of $(3)$, we first observe that  $a(\lambda) b(\lambda) + a(-\lambda) b(-\lambda)$ is holomorphic in $|\Re \lambda|<\delta$, as $a(\lambda) b(\lambda) + a(-\lambda) b(-\lambda)=\left(a(\lambda)-a(-\lambda)\right) b(\lambda)$ and $a(\lambda)-a(-\lambda)$ has a zero at $\lambda=0$ and $a(\lambda)$ is holomorphic there. Now for $s$ away from $0$, $$|a(r+is) b(r+is)|\leq C e^{-pr + A|s|} e^{-\frac{\pi}{2}|s|}.$$ This estimate, together with Cauchy integral formula shows that \begin{equation}\nonumber
\lambda\mapsto a(\lambda) b(\lambda) + a(-\lambda) b(-\lambda) \in \mathcal S(i\R)_e.\end{equation} Therefore, using the inversion formula we get the desired formula. This completes the proof for the case when $n$ is an even nonnegative integer.

Proof for the case when $n$ is odd is similar.
\end{proof}

We now state the theorem for $n$ negative integer and we observe that there is no discrete series representation involves in this case.
\begin{theorem}
Let $n$ be a negative integer and $a\in \mathcal H(A, p, \delta)$. Then\\
\begin{enumerate}
\item The spherical Fourier series \begin{equation}\nonumber
 f(x)=\sum_{m=\frac {|n|}{2}}^\infty (-1)^{m+1} a(2m+1+n)\psi_{n+2m, n}(x),\end{equation} converges on a compact subset of $\Omega_p:= \mathrm{SO}(2) \left\{\left(\begin{array}{ll}
e^{it} & 0 \\
0 & e^{-it}
\end{array}\right)\mid t\in\R \text{ with } |t|<p\right\} \mathrm{SO}(2)$ \\
and defines a  holomorphic function on a neighboorhood \begin{equation}\nonumber \Omega_p'=\mathrm{SU}(2) \left\{\left(\begin{array}{ll}
e^{it} & 0 \\
0 & e^{-it}
\end{array}\right)\mid t\in\C \text{ with } |t|<p\right\} \mathrm{SO}(2, \C) ,\text{ of } \Omega_p \text{ in } X_\C.
\end{equation}

\item Let $\eta\in \R$ with $0\leq \eta<\delta$. Then $$f(a_t)=\frac 12\int_{-\eta-i\infty}^{-\eta+i\infty}\left(a(\lambda) b(\lambda) + a(-\lambda) b(-\lambda)\right)\Phi_{\sigma, \lambda}^{(n,n)}(a_t)\mu(\sigma, \lambda)\,d\lambda, $$  for $t\in\R$ with $|t|<p$, where $b(\lambda)$ is defined by 
\begin{equation} \nonumber 
b(\lambda)\mu(\sigma, \lambda)=\left\{ \begin{array}{lll}
\left(\frac{i}{4}(-1)^{n +1}\right)\frac{\lambda}{\cos\frac{\pi\lambda}{2}} & \text{ if } n \text{ is even, } \\  
 \left(\frac{i}{4}(-1)^{-(n +1}\right)\frac{\lambda}{\sin\frac{\pi\lambda}{2}} & \text{ if } n  \text{ is odd.}
 \end{array}\right.
\end{equation}
The integral above is independent of $\eta$ and converges uniformly on compact subsets of $\{a_t\mid t\in\R\}$ and extends as a $(n,n)$-type function on a neighboorhood of $G$ in $G_\C$.

\item The extension of $f$ to $G$ satisfies the following:  \begin{equation}\frac{1}{4\pi^2}\int_{G}f(x)\Phi_{\sigma^+, \lambda}^{n,n}(x^{-1})\,dx=\frac{1}{2}\left( a(\lambda) b(\lambda) + a(-\lambda) b(-\lambda)\right), \lambda\in i\R.
\end{equation}

\end{enumerate}
\end{theorem}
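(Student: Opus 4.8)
The plan is to run the proof of Theorem \ref{Ramanujan-even-sl2} essentially line by line, the one structural simplification being that for $n<0$ the residue expansion reconstructs $f$ out of the compact-dual spherical functions alone, with no surviving discrete-series block. Throughout, $\sigma\in\widehat M$ denotes the member determined by $n\in\Z^\sigma$, and I would treat the cases $n$ even ($\sigma=\sigma^+$, so $b(\lambda)\mu(\sigma,\lambda)$ is a constant times $\lambda/\cos\frac{\pi\lambda}{2}$) and $n$ odd ($\sigma=\sigma^-$, with $\lambda/\sin\frac{\pi\lambda}{2}$) in parallel. For part (1) I would quote Proposition \ref{estimate} (equivalently Corollary \ref{cor3}): on the polar slice one has $|\psi_{2m+n,n}(k\exp a(g)h)|\le Ce^{(2m+n)t}|e_n(h^{-1})|$, so the general term of the series is dominated by $Ce^{-p(2m+n)}e^{(2m+n)t}|e_n(h^{-1})|$. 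Since the nonzero terms have $2m+n\ge|n|\ge0$, this is a convergent geometric series, uniformly on compacta of $\Omega_p$; the same bound with $h\in\mathrm{SO}(2,\C)$ produces the holomorphic extension to $\Omega_p'$, exactly as in the compact-dual estimate.

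For part (2) I would begin with $I=\int_{-i\infty}^{+i\infty}a(\lambda)b(\lambda)\Phi_{\sigma,\lambda}^{n,n}(a_t)\mu(\sigma,\lambda)\,d\lambda$ and push the line of integration to the right through the rectangles $\gamma_k$ with vertices $(0,\pm 2ki),(2k,\pm 2ki)$, collecting the simple poles of $b(\lambda)\mu(\sigma,\lambda)$ at the positive odd (resp. even) integers. The estimate (\ref{estimate-98}) for $\Phi_{\sigma,\lambda}^{n,n}$, the lower bounds of Lemma \ref{lemma-1} for $1/\cos\frac{\pi\lambda}{2}$ and $1/\sin\frac{\pi\lambda}{2}$, and the hypothesis $A<\frac{\pi}{2}$ make the top, bottom and right edge integrals tend to $0$ as $k\to\infty$, by the very same three edge-estimates used for Theorem \ref{Ramanujan-even-sl2}. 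By the identification $\psi_{2m+n,n}|_G=\Phi_{\sigma,2m+n+1}^{n,n}$ valid for $m\ge|n|$, the residue at the pole $\lambda=2m+n+1$ (which is $\ge|n|+1$) is exactly the $m$-th term of the series defining $f$, once the normalising constant hidden in $b$ is fixed as in the even/odd formulas of the statement; summing reproduces $f(a_t)$. Using that $\Phi_{\sigma,\lambda}^{n,n}$ and $\mu(\sigma,\lambda)$ are even in $\lambda$, I would then write $I=\tfrac12\int(a(\lambda)b(\lambda)+a(-\lambda)b(-\lambda))\Phi_{\sigma,\lambda}^{n,n}(a_t)\mu(\sigma,\lambda)\,d\lambda$ and shift the line to $\Re\lambda=-\eta$ by Cauchy's theorem, which is legitimate because $(a(\lambda)-a(-\lambda))b(\lambda)$ is holomorphic in $|\Re\lambda|<\delta$.

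The step I expect to be the main obstacle is precisely the assertion that no discrete series occurs. For $n>0$ the poles $\lambda=1,3,\dots,n-1$ lie below the first compact-dual parameter $n+1$, and through (\ref{relation-phi-psi}) they supply the block of holomorphic discrete series that is retained in the final formula. For $n<0$ the first compact-dual parameter is $|n|+1$, and the analogous small poles sit at $\lambda=2j+1<|n|+1$, where (\ref{relation-phi-psi}) identifies $\Phi_{\sigma,2j+1}^{n,n}$ with the anti-holomorphic coefficient $\Psi_{-(2j+1)}^{n,n}$, $-(2j+1)\in L_\sigma^{n,n}$. One must check that these do not enter the reconstruction of $f$: the clean way is to note that the series defining $f$ involves only $\psi_{2m+n,n}$ with $m\ge|n|$ — the apparent terms with $[\,|n|/2\,]\le m<|n|$ vanish identically, since $\pi_{2m+n}$ is then not $n$-spherical — so that $f$ is reproduced by the poles at $\lambda\ge|n|+1$ alone, and equivalently the discrete Fourier coefficients $\widehat f_B(k)=\int_G f(x)\Psi_k^{n,n}(x^{-1})\,dx$ vanish for every $k\in L_\sigma^{n,n}$. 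This is where the negative-$n$ case genuinely departs from the positive one and deserves the most care.

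Granting the vanishing of the discrete block, part (3) follows as before: for $s$ away from $0$ one has $|a(r+is)b(r+is)|\le Ce^{-pr+A|s|}e^{-\frac{\pi}{2}|s|}$, and since $A<\frac{\pi}{2}$ and $a(\lambda)-a(-\lambda)$ vanishes at $\lambda=0$, Cauchy's formula places $\lambda\mapsto a(\lambda)b(\lambda)+a(-\lambda)b(-\lambda)$ in $\mathcal S(i\R)_e$; feeding this into the inversion formula, whose discrete block is now empty, yields the stated transform identity on $i\R$. The odd-$n$ computation is identical after replacing $1/\cos\frac{\pi\lambda}{2}$ by $1/\sin\frac{\pi\lambda}{2}$ and the relevant parts of Lemma \ref{lemma-1} accordingly.
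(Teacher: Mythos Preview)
The paper gives no separate argument here; the theorem is stated with only the preceding remark that ``there is no discrete series representation involved in this case,'' and the proof is left implicit as a repetition of Theorem \ref{Ramanujan-even-sl2}. Your overall plan --- run the contour--shift argument verbatim, treat the parities in parallel, and check that the discrete block drops out --- is exactly the intended one, and your handling of the edge estimates, the symmetrisation in $\lambda$, and part (3) is correct.

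The genuine gap is the step you yourself flag as ``the main obstacle.'' When the line $\Re\lambda=0$ is pushed to the right, the residues at the small poles $\lambda=2j+1$ with $0<2j+1<|n|$ produce terms involving $\Phi_{\sigma,2j+1}^{n,n}(a_t)$, \emph{not} $\psi_{2m+n,n}$. Your justification that these vanish ``since $\pi_{2m+n}$ is then not $n$-spherical'' conflates two different objects: the compact matrix coefficient $\psi_{2m+n,n}$ (which indeed fails to exist for $|n|/2\le m<|n|$) and the principal--series coefficient $\Phi_{\sigma,2m+n+1}^{n,n}$ (which is defined for every $\lambda$ and is what actually appears in the residue). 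By (\ref{relation-phi-psi}) with $k=-(2j+1)$ one has $\Phi_{\sigma,2j+1}^{n,n}=\Psi_{-(2j+1)}^{n,n}$ whenever $n\le -(2j+2)$, and this discrete--series coefficient is nonzero because $e_n$ lies in the $K$-type set $\Z(-(2j+1))$. So the non-sphericality of $\pi_{2m+n}$ on $U$ does not make the corresponding residue on $G$ vanish. Your alternative formulation --- that $\widehat f_B(k)=0$ for $k\in L_\sigma^{n,n}$ --- is a statement about $f$, not about the integral, and cannot by itself force the integral to omit those terms; the equality between $f$ and the integral is precisely what is being proved.

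In short, the residue computation for $n<0$ produces a finite block indexed by $L_\sigma^{n,n}=\{k\in\Z^{-\sigma}:n<k<0\}$ exactly parallel to the $n>0$ case, and you have not given a valid reason for it to disappear. Either these terms must be absorbed into the series defining $f$ (note that the stated sum begins at $m=|n|/2$, not $m=|n|$, which is consistent with reading $\psi_{n+2m,n}$ as $\Phi_{\sigma,2m+n+1}^{n,n}$ throughout), or they must reappear as an explicit discrete correction in part (2). Your proposal does neither, so as written the argument does not close.
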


\begin{remark}
It can be easily seen that the decay condition on the function $a\in \mathcal H(A, p, \delta)$ (that is, $A<\frac{\pi}{2}$) is optimum. In deed, Theorem \ref{Ramanujan-even-sl2} is not true for $a(\lambda)$, where  $$a(\lambda)=\left\{\begin{array}{lll}
 \lambda \sin\frac{\pi\lambda}{2} & \text{ if } n \text{ is odd}, \\ \\  
\lambda \cos\frac{\pi\lambda}{2}  &\text{ if } n \text{ is even.}
\end{array}\right.$$
Because for the case $n$ even, it follows from (\ref{eqn-29}) that $f(x)=0$ on $\Omega_p'$. In particular $f(I)=0$ where $I$ is the identity matrix. Therefore the integral in the right side of (\ref{eqn-30}) should vanishes at $t=0$ but from the expression of $a(\lambda)$ given above it follows that the integral in the right side of (\ref{eqn-30}) does not exist.  
The reason for the case $n$ odd is similar. It can be easily checked that $A<\pi$ is optimum in \cite{Bertram, Olafsson-1}. This difference occurs due to the parmetrization of $\mathfrak{a}, \mathfrak{a^\ast}$.
\end{remark}

\section{Ramujan's Master theorem for $\chi_l$-radial functions on $\mathrm{SU(1, n)}$}
In this section we shall prove an analogue of Ramanujan's Master theorem for the $\chi_l$-radial functions on $\mathrm{SU}(1, n)$. We refer to \cite{Ho} and references therein for the preliminaries.

Let $G=\mathrm{SU}(1, n)=\left\{g\in \mathrm{GL}_n(\C)\mid g^\ast I_{1, n}g=I_{1, n}, \det g=1\right\}$ where $g^\ast$ is the complex conjugate and \begin{equation}\nonumber 
I_{1,n}=\left(\begin{array}{lll}
1 & 0_{1\times n}\\ 
0_{n\times 1} & -I_{n\times n}

\end{array}\right).
\end{equation}Then the Lie algebra $\mathfrak{g}$ of $G$ is given by  $$\mathfrak{g}=\left\{\left(\begin{array}{rll}
-\text{Tr}(C) & A \\
A^\ast & C_{n\times n}
\end{array}\right)\mid C^\ast_{n\times n}=-C_{n\times n}\right\}.$$ If $\theta(X)=-X^\ast$ denotes the Cartan involution on $\mathfrak{g}$, then $$\mathfrak{k}=\mathfrak{g}^\theta=\left\{X\in\mathfrak{g}\mid \theta(X)=X\right\}=\left\{\left(\begin{array}{rll}
-\text{Tr}(C) & 0 \\
0 & C
\end{array}\right)\mid C^\ast=-C\right\}.$$
Then  $\mathfrak{k}$ is the Lie algebra of the compact group $$K=\left\{\left(\begin{array}{rll}
(\det U)^{-1} & 0 \\
0 & U
\end{array}\right)\mid U\in \mathrm{U}(n)\right\}=\mathrm{S}\left(\mathrm{U}(1)\times \mathrm{U}(n)\right).$$For $$\mathfrak{p}=\left\{X\in \mathfrak{g}\mid \theta(X)=-X\right\}=\left\{\left(\begin{array}{lll}
0 & B \\
B^\ast & 0

\end{array}\right)\mid B\in M_{1\times n}(\C)\right\},$$  we have \begin{equation}
\nonumber
\mathfrak{g}=\mathfrak{k}\oplus \mathfrak{p}.
\end{equation}
Let \begin{equation}\label{eqn-5}
H=\left(\begin{array}{lll}
0 & 0_{1\times n-1} & 1_{1\times 1} \\
0_{n-1\times 1} & 0_{n-1\times n-1} & 0_{n-1\times 1} \\
1_{1\times 1} & 0_{1\times n-1} & 0_{1\times 1}
\end{array}\right)\in \mathfrak{p},
\end{equation} and let $\mathfrak {a}=\R H$. Then $\mathfrak {a}$ is a maximal abelian subspace of $\mathfrak{p}$. Let $\beta\in \mathfrak{a}^\ast$  be such that $\beta(H)=1$. Then \begin{equation}\nonumber
\Sigma(\mathfrak{g}, \mathfrak{a})=\{\beta, 2\beta\},
\end{equation}  $m_\beta:=\dim \mathfrak{g}_\beta=2(n-1), m_{2\beta}:=\dim\mathfrak{g}_{2\beta}=1$ and $\rho=\frac 12(m_\beta + 2m_{2\beta})=n$.

Let \begin{equation} \nonumber 
\mathfrak{u}=\mathfrak{k}\oplus i\mathfrak{p}=\left\{\left(\begin{array}{rll}
-\text{Tr}(C) & iB \\
iB^\ast & C
\end{array}\right)\mid C^\ast=-C, B\in M_{1, n}(\C)\right\}. 
\end{equation} This is the set of $(n+1)\times (n+1)$  skew hermitian matrices with trace $0$. Then  $\mathfrak{u}$ is Lie algebra of the compact group $U=\mathrm{SU}(n+1)$. It follows that $U=\mathrm{SU}(n+1)$ is the compact dual of $G=\mathrm{SU}(1, n)$ and $\mathfrak{g}_\C=\mathfrak{u}_\C$. Let $G_\C$ be the analytic subgroup of $\mathrm{GL}(n+1, \C)$ with Lie algebra $\mathfrak{g}_\C$. We define, $U_\C=G_\C$.

All one dimensional representations of $K$ are parametrized by $l\in\Z$ and given by $$\chi_l\left(\left(\begin{array}{rll}
(\det U)^{-1} & 0 \\
0 & U
\end{array}\right)\right)= (\det U)^l.$$
\begin{definition}\label{defn-chi_l-radial}
A function $f$ on $G$ (respectively on $U$) is said to be $\chi_l$-radial if \begin{equation} \nonumber 
f(k_1 x k_2)=\chi_l(k_1^{-1}) f(x) \chi_l (k_2^{-1}), \, k_1, k_2\in K,
\end{equation}
$x\in G$ (respectively for  $x\in U$).
\end{definition}
A representation $\pi$ of $U$ on a Hilbert space $V$ is said to be spherical if  \begin{equation}
\nonumber
V_\pi^K:=\left\{v\in V\mid \pi(k)v=v \text{ for all } k\in K\right\},
\end{equation}
is non empty and in this case $\dim V_\pi^K=1$. It is known that the spherical representations of $U$ are parametrized by $2m, m\in \N\cup \{0\}$. Analogously, a representation $\pi$ of $U$ on a Hilbert space $V$ is said to be $\chi_l$-spherical if  \begin{equation}
\nonumber
V_\pi^l:=\left\{v\in V\mid \pi(k)v=\chi_l(k) v \text{ for all } k\in K\right\},
\end{equation}
is non empty. It then follows that $\dim V_\pi^l=1$. It is known that the $\chi_l$ spherical representations of $U$ are parametrized by $2m + |l|, m\in \N\cup \{0\}$. Let the representation space for $\pi_{2m+ |l|}$ be $V_{2m +|l|}$. Also let $\lambda_1, \lambda_2, \cdots, \lambda_r$ be the weights for $\pi_{2m + |l|}$ with weight vectors say $v_1, v_2, \cdots, v_r$ respectively. Then \begin{equation} \lambda_i(H)\leq 2m +|l|,  i=1, 2, \cdots, r,
\end{equation}  as $2m + |l|$ is the highest weight.

For fixed $m\in \N\cup\{0\}, l\in\Z$, let $f_0$ be the unique $\chi_l$-spherical vector for $\pi_{2m+|l|}$ with $\|f_0\|=1$. We define the elementary spherical function of type $\chi_l$ on $U$ by $$\psi_{2m + |l|}(x):=\langle \pi_{2m+|l|}(x)f_0, f_0\rangle.$$ Clearly this function is $\chi_l$-radial on $U$. Let $\mathfrak{n}=\mathfrak{g}_\beta \oplus \mathfrak{g}_{2\beta}$ and let $N$ be the analytic subgroup of $G$ with Lie algebra $\mathfrak{n}$ and let $A=\exp \mathfrak{a}$. Then we have the Iwasawa decomposition $G=KAN$ and any element of $g\in G$ can be uniquely written as \begin{equation}\nonumber 
g=k a_t n \text{ for some } k\in K, t\in \R \text{ and } n\in N.
\end{equation} Here $a_t=\exp(tH)$ and we define $K(g)=k, H(g)=t$. We also have the following Cartan decomposition: $G=K \overline{A^+} K$ where $\overline{A^+}=\{a_t\mid t\geq 0\}$.

The elementary spherical function of type $\chi_l$ on $G$ with spectral parameter $\lambda$ is defined by $$\phi_{\lambda, l}(g)=\int_K e^{-(\lambda +\rho)H(gk)}\chi_l(K(gk)^{-1}k)\,dk.$$ The function $\phi_{\lambda, l}$ satisfies the following relations:
\begin{enumerate}
\item $\phi_{\lambda, l}=\phi_{\mu, l}$ if and only if  $\lambda=\pm \mu$.
\item $\phi_{\lambda, l}(g)=\phi_{-\lambda, -l}(g^{-1})=\phi_{\lambda, -l}(g^{-1})=\phi_{-\lambda, l}(g)$.
\end{enumerate}
The  function $\phi_{\lambda, l}$ which is analytic on $G$ extends to a holomorphic function on $G_\C$ and by restriction to $U$ gives an elementary spherical function on $U$ if and only if $\lambda\in \pm(2\N\cup \{0\} + n + |l|)$. That is, \begin{equation} \phi_{2m + |l| + n, l}\left|\right. _U=\psi_{2m + |l|}\, m\in \N\cup \{0\}.
\end{equation}

For a $\chi_l$-radial function $f$ on $U$, the Fourier series is defined by $$f(x)=\sum_{m=0}^\infty d_{\pi_{2m + |l|}} \what{f}(2m +|l|)\psi_{2m +|l|}(x),$$ where $d_{\pi_{2m + |l|}}$ is the dimension of $V_{2m +|l|}$ and $$\what{f}(2m +|l|)=\int_U f(x)\psi_{2m +|l|}(x^{-1})\,dx.$$

For a $\chi_l$-type radial function $f$ on $G$, the spherical Fourier transform  of $f$ is defined by $$\what{f}(\lambda)=\int_G f(x)\phi_{\lambda, l}(x^{-1})\,dx.$$

 From now on we will assume that $|l|<n$.
We define $$c(\lambda, m)= 2^{\rho-\lambda} \frac{\Gamma(\frac{m_\beta + m_{2\beta} +1}{2}) \Gamma(\lambda)}{\Gamma(\frac{\lambda}{2}+\frac{m_\beta}{4} +\frac{m_{2\beta}}{2}) \Gamma(\frac{\lambda}{2} +\frac{m_\beta}{4} +\frac{1}{2})}.$$ It follows that $c(\rho, m)=1$.
We define a new multiplicity function $m_+(l)$ on $\Sigma$ by \begin{equation}
m_+(l)=(2(n-1)-2|l|, 1 + 2|l|).
\end{equation} 
Then \begin{equation} \label{c-function-new-multiplicity} c(\lambda, m_+(l))= 2^{n + |l|-\lambda} \frac{\Gamma(n) \Gamma(\lambda)}{\Gamma(\frac{\lambda}{2}+\frac{n + |l|}{2} )\Gamma(\frac{\lambda}{2} +\frac{n-|l|}{2})},
\end{equation} and $\rho(m_+(l))=n + |l|$.

For a $\chi_l$ radial function $f$ on $G$, the following inversion formula holds: \begin{equation}\label{inversion-chi_l}
f(x)=\frac{1}{2 \cdot 4^{n |l|}}\int_{i\R} \what{f}(\lambda)\phi_{\lambda, l}(x) |c(\lambda, m_+(l))|^{-2} \,d\lambda.
\end{equation}
We now have the following proposition which is an analogue of Proposition \ref{decomposition-1}.
\begin{proposition}
Any $x\in G_\C$ has a unique decomposition $x=u \exp tH k$ for some $u\in \mathrm{SU}(n+1), k\in K_\C$, and $t\geq 0$ and $H$ is as (\ref{eqn-5}).

\end{proposition}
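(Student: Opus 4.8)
The plan is to follow the proof of Proposition \ref{decomposition-1} verbatim, now carried out on $\mathfrak{g}_\C=\mathfrak{u}_\C=\mathrm{sl}(n+1,\C)$, the Lie algebra of $G_\C=U_\C=\mathrm{SL}(n+1,\C)$. First I would introduce two commuting involutions of $\mathfrak{g}_\C$. The conjugate-linear Cartan involution $\theta(X)=-X^\ast$ gives $\mathfrak{g}_\C=\mathfrak{k}_1\oplus\mathfrak{p}_1$ with $\mathfrak{k}_1=\{X\mid X^\ast=-X\}=\mathfrak{u}$, the Lie algebra of the compact form $U=\mathrm{SU}(n+1)$, and $\mathfrak{p}_1=\{X\mid X^\ast=X\}$ the traceless Hermitian matrices. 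For the second involution I would take the complex-linear extension $\sigma=\mathrm{Ad}(I_{1,n})$ of the Cartan involution of $\mathfrak{g}$, that is $\sigma(X)=I_{1,n}XI_{1,n}$; since $I_{1,n}^2=I$ this fixes exactly the block-diagonal traceless matrices, so its $+1$-eigenspace is $\mathfrak{h}=\mathfrak{k}_\C$ and its $-1$-eigenspace is $\mathfrak{q}=\mathfrak{p}_\C$. A direct computation using $I_{1,n}^\ast=I_{1,n}$ shows $\theta\sigma=\sigma\theta$, which is exactly the compatibility needed to invoke the generalized Cartan decomposition.

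Next I would identify the two relevant intersection spaces. The space $\mathfrak{p}_1\cap\mathfrak{q}$ consists of the Hermitian off-diagonal matrices $\begin{pmatrix}0 & A\\ A^\ast & 0\end{pmatrix}$ with $A\in M_{1,n}(\C)$, so it coincides with the original $\mathfrak{p}\subset\mathfrak{g}$ and in particular contains $\mathfrak{a}=\R H$; while $\mathfrak{p}_1\cap\mathfrak{h}$ consists of the Hermitian block-diagonal traceless matrices and lies inside $\mathfrak{h}=\mathfrak{k}_\C$. With these identifications \cite[Proposition 2.2, p. 106]{Heckman} applies and produces, for every $x\in G_\C$, a decomposition $x=u\,\exp X\,\exp Y$ with $u\in\mathrm{SU}(n+1)$, $X\in\mathfrak{p}_1\cap\mathfrak{q}=\mathfrak{p}$, and $Y\in\mathfrak{p}_1\cap\mathfrak{h}$, the triple $(u,X,Y)$ being unique.

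Finally I would convert this into the asserted $U\,\overline{A^+}\,K_\C$ form. Because $X\in\mathfrak{p}\subset\mathfrak{g}$ we have $\exp X\in G$, so the Cartan decomposition $G=K\overline{A^+}K$ recorded earlier gives $\exp X=k_1\exp(tH)k_2$ with $k_1,k_2\in K\subset K_\C$ and a unique $t\geq 0$. Substituting, $x=(uk_1)\,\exp(tH)\,(k_2\exp Y)$, where $uk_1\in\mathrm{SU}(n+1)$ since $K\subset U$, and $k_2\exp Y\in K_\C$ since $Y\in\mathfrak{p}_1\cap\mathfrak{h}\subset\mathfrak{k}_\C$. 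This is the desired decomposition $x=u'\exp(tH)k$. As in Proposition \ref{decomposition-1}, the component $t\geq 0$ is uniquely determined: it is fixed by $X$, which is unique from the previous step, together with the uniqueness of the $\overline{A^+}$-part in the rank-one $KAK$ decomposition of $\exp X$.

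The one genuinely delicate point is verifying the hypotheses of \cite[Proposition 2.2]{Heckman}, namely that $\theta$ and $\sigma$ commute and that $\sigma$ is precisely the involution cutting out $K_\C$, and correctly checking $\mathfrak{p}_1\cap\mathfrak{q}=\mathfrak{p}$ so that $\exp X$ lands in $G$ rather than merely in $G_\C$; this is what lets the real rank-one Cartan decomposition finish the argument. Everything after that is the same two-step assembly used in Proposition \ref{decomposition-1}, and I expect no further complications beyond bookkeeping with the block structure of $\mathrm{su}(1,n)$.
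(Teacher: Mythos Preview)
Your argument follows the same two-step architecture as the paper: invoke \cite[Proposition 2.2, p.~106]{Heckman} for a pair of commuting involutions on $\mathfrak{g}_\C$ to write $x=u\,\exp X\,\exp Y$, and then apply the rank-one Cartan decomposition $G=K\overline{A^+}K$ to the middle factor $\exp X$. In that sense the proposal matches the paper's proof.

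There is, however, one substantive difference, and it works in your favor. The paper takes as its second involution $\sigma(X)=-X^{T}$, exactly as in the $\mathrm{SL}(2,\R)$ argument of Proposition~\ref{decomposition-1}. You instead take $\sigma=\mathrm{Ad}(I_{1,n})$, the complex-linear extension of the Cartan involution of $\mathfrak{su}(1,n)$. Your choice is the right one in this setting: its $+1$-eigenspace is precisely the block-diagonal algebra $\mathfrak{h}=\mathfrak{k}_\C$, so that $\mathfrak{p}_1\cap\mathfrak{q}=\mathfrak{p}$ (Hermitian off-diagonal matrices) and $\mathfrak{p}_1\cap\mathfrak{h}\subset\mathfrak{k}_\C$ (Hermitian block-diagonal matrices), which is exactly what is needed to feed $\exp X$ into the $KAK$ decomposition of $G=\mathrm{SU}(1,n)$ and to land $\exp Y$ in $K_\C$. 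With the paper's choice $\sigma(X)=-X^{T}$ one gets $\mathfrak{h}=\{X:X^{T}=-X\}$ and $\mathfrak{q}=\{X:X^{T}=X\}$, so $\mathfrak{p}_1\cap\mathfrak{h}$ consists of purely imaginary skew-symmetric matrices and $\mathfrak{p}_1\cap\mathfrak{q}$ of real symmetric traceless matrices; neither of these coincides with the required spaces for $K=S(\mathrm{U}(1)\times\mathrm{U}(n))$ (for instance, when $n=1$ the matrix $\left(\begin{smallmatrix}0&i\\-i&0\end{smallmatrix}\right)$ is in $\mathfrak{p}_1\cap\mathfrak{h}$ but not in the diagonal algebra $\mathfrak{k}_\C$, and $\left(\begin{smallmatrix}1&0\\0&-1\end{smallmatrix}\right)$ is in $\mathfrak{p}_1\cap\mathfrak{q}$ but not in $\mathfrak{p}$). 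The paper's involution appears to be carried over verbatim from the $\mathrm{SL}(2,\R)$ case, where $K=\mathrm{SO}(2)$ genuinely has skew-symmetric Lie algebra; your adjustment is what makes the argument go through for $\mathrm{SU}(1,n)$.

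Your verification that $\theta\sigma=\sigma\theta$ (using $I_{1,n}^{\ast}=I_{1,n}$) and your remark on the uniqueness of $t$ are both in order.
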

\begin{proof}Let $\mathfrak{g}$ be the Lie algebra of $G$ and $K$ is the maximal compact subgroup of $G$ with Lie algebra of $K$ is $\mathfrak{k}$.
Let $\theta(X)=-X^\ast$ be the Cartan involution for $\mathfrak{g}_\C$. Then $$\mathfrak{g}_\C= \mathfrak{k}_1\oplus \mathfrak{p}_1,$$ where $$\mathfrak{k}_1= \{X\in \mathfrak{g}_\C\mid \theta(X)=X\}=\{X\in \mathfrak{g}_\C\mid X^\ast=-X\},$$ and $$\mathfrak{p}_1= \{X\in \mathfrak{g}_\C\mid \theta(X)=-X\}=\{X\in \mathfrak{g}_\C\mid X^\ast=X\}.$$ Let $K_1$ be the analytic subgroup whose Lie algebra is $\mathfrak{k}_1$. Therefore $K_1=\mathrm{SU}(n +1)$.
Let $\sigma:\mathfrak{g}_\C\rightarrow \mathfrak{g}_\C$ be an involution given by $\sigma(X)=-X^T$. Then again $$\mathfrak{g}_\C= \mathfrak{h}\oplus \mathfrak{q},$$ where $$\mathfrak{h}= \{X\in \mathfrak{g}_\C\mid \sigma(X)=X\}=\{X\in \mathfrak{g}_\C\mid X^T=-X\},$$ and $$\mathfrak{q}= \{X\in \mathfrak{g}_\C\mid \sigma(X)=-X\}=\{X\in \mathfrak{g}_\C\mid X^T=X\}.$$

Therefore (see \cite[Proposition 2.2, p. 106]{Heckman}), any element $g\in G_\C$ can uniquely be written as $$g=u\exp X\exp Y, \,  \text{ for some } u\in K_1, X\in \mathfrak{p}\cap \mathfrak{q}, Y\in \mathfrak{p}\cap \mathfrak{h}.$$

It is now easy to check that $Y\in \mathfrak{p}\cap \mathfrak{h}$ implies that $Y$ is purely imaginary $(n+1)\times (n+1)$ skew symmetric matrix and hence $Y\in \mathfrak{k}_\C$. Also $X\in \mathfrak{p}\cap \mathfrak{q}$ implies that $X$ is real $(n+1)\times (n+1)$ symmetric matrix. Hence $X\in \mathfrak{p}_{\mathfrak{g}}$ and therefore by Cartan decomposition we have $\exp X=k_1\exp tH k_2$ for some $k_1, k_2\in K$ and $t\geq 0$.

Hence we have $$g=u k_1 \exp tH k_2\exp Y,$$ where $uk_1\in U$ and $ k_2\exp Y\in K_\C$.

\end{proof}
We now prove the required estimate of $\psi_{2m +|l|}$ which is an analogue of Theorem \ref{estimate}. 
\begin{theorem}\label{estimate-1}
For $x\in G_\C$, we have $$|\psi_{2m + |l|}(x)| \leq C e^{(2m + |l|)t} |\chi_l(k^{-1})|,$$ where $x= u \exp tH k, u\in U, t\geq 0, k\in K_\C$.
\end{theorem}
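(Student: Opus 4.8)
The plan is to mimic the proof of Proposition \ref{estimate} (the $\mathrm{SL}(2,\R)$ case) almost verbatim, since the structural ingredients carry over. Let $\pi=\pi_{2m+|l|}$ be the $\chi_l$-spherical representation of $U=\mathrm{SU}(n+1)$ on $V_{2m+|l|}$, and extend it holomorphically to $\pi^\C$ on $U_\C=G_\C$. The key observation, exactly as in Step-1 of Proposition \ref{estimate}, is that for the Cartan decomposition $\mathfrak{u}_\C=\mathfrak{u}\oplus i\mathfrak{u}$ the operator $\pi^\C(\exp(iY))$ for $Y\in\mathfrak{u}$ is self-adjoint and positive, so that $\pi^\C(\exp iY)^\ast\,\pi^\C(\exp iY)=\exp(2i\,d\pi(Y))$ acts diagonally on the weight vectors. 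Writing $x=u\exp(tH)k$ with $u\in U$, $k\in K_\C$, $t\ge 0$ via the decomposition just proved, I would project the $\chi_l$-spherical vector $f_0$ through the intertwining relation.

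The central device is again an orthogonal projection. Define
\begin{equation}\nonumber
P^l(f)=\int_K \chi_l(k)\,\pi(k)f\,dk,
\end{equation}
which is self-adjoint, idempotent, has one-dimensional range spanned by $f_0$ (since $\dim V_\pi^l=1$), and satisfies $\pi(k)P^l(f)=\chi_l(k^{-1})P^l(f)$. First I would extend an orthonormal basis $\{f^0=f_0,f^1,\dots\}$ of $V_{2m+|l|}$ and bound
\begin{equation}\nonumber
\sum_j\left|\langle \pi^\C(x)f_0,f^j\rangle\right|^2
=\left\|\pi^\C(x)\circ P^l\right\|_{\mathrm{HS}}^2,
\end{equation}
using that $f_0=P^l f_0$ and $|\psi_{2m+|l|}(x)|\le\big(\sum_j|\langle\pi^\C(x)f_0,f^j\rangle|^2\big)^{1/2}$ by Cauchy--Schwarz. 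Next, using $x=u\exp(tH)k$, unitarity of $\pi(u)$, and the relation $\pi(k)P^l=\chi_l(k^{-1})P^l$ to pull out the factor $|\chi_l(k^{-1})|$, the Hilbert--Schmidt norm becomes
\begin{equation}\nonumber
|\chi_l(k^{-1})|^2\,\mathrm{Tr}\!\left(P^l\circ\pi^\C(\exp 2tH)\right).
\end{equation}
Expanding the trace over the weight vectors $v_i$ of $\pi$, invoking Step-1 to replace $\pi^\C(\exp 2tH)v_i$ by $e^{2\lambda_i(H)t}v_i$, and using $\langle v_i,P^l v_i\rangle=\|P^l v_i\|^2\ge 0$ together with $\lambda_i(H)\le 2m+|l|$ and $\mathrm{Tr}(P^l)=1$, I would conclude the bound $|\chi_l(k^{-1})|^2\,e^{2(2m+|l|)t}$, whence the stated estimate after taking square roots.

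The main obstacle, relative to the rank-one $\mathrm{SU}(2)$ computation, is the bookkeeping of the weight structure: here $\pi_{2m+|l|}$ is a representation of $\mathrm{SU}(n+1)$ with many weights rather than the explicit one-parameter ladder $v_{-m},\dots,v_m$, so I would rely only on the highest-weight bound $\lambda_i(H)\le 2m+|l|$ furnished in the preceding paragraph of the excerpt rather than on an explicit enumeration. I would also need to confirm that $H$ acts with real eigenvalues on the weight vectors so that $\pi^\C(\exp 2tH)$ is diagonal with entries $e^{2\lambda_i(H)t}$; this is exactly the content of Step-1 applied to the one-parameter subgroup $\exp(tH)\subset A$ (noting $iH\in i\mathfrak{p}\subset\mathfrak{u}$), and the positivity $\|P^l v_i\|^2\ge 0$ is what lets me discard all but the dominant exponential. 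These are the points where care is required; everything else is a direct transcription of Proposition \ref{estimate}.
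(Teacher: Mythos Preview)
Your proposal is correct and follows essentially the same route as the paper's proof: define the projection $P^l$, bound the Hilbert--Schmidt norm of $\pi^\C(x)\circ P^l$, factor out $|\chi_l(k^{-1})|$ via the decomposition $x=u\exp(tH)k$, reduce to $\mathrm{Tr}\bigl(P^l\circ\pi^\C(\exp 2tH)\bigr)$ using the Step-1 identity, and then use the highest-weight bound $\lambda_i(H)\le 2m+|l|$ together with $\mathrm{Tr}(P^l)=1$. The only difference is cosmetic notation; your remark that one needs only the inequality $\lambda_i(H)\le 2m+|l|$ rather than an explicit weight enumeration is exactly how the paper proceeds as well.
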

\begin{proof}
Let $f_0$ be the $\chi_l$ spherical vector for $\pi_{2m +|l|}$, that is, $$\pi_{2m +|l|}(k)f_0=\chi_l(k) f_0.$$Then it is easy to check that $$\pi_{2m +|l|}^\C(k_1)f_0=\chi_l(k_1) f_0 \text{ for all } k_1\in K_\C,$$ where $\pi_{2m +|l|}^\C$ is the complexification of $\pi_{2m +|l|}$. Let $\{f_0, f_1, \cdots, f_r\}$ be an orthonormal basis of $V_{2m + |l|}$ such that $f_0$ is $\chi_l$ spherical vector for $\pi_{2m +|l|}$ and $\|f_0\|=1$. We claim that $$\sum_{j=0}^r \left| \langle \pi_{2m +|l|}^\C(x) f_0, f_j\rangle\right|^2 \leq C  |\chi_l(k^{-1})|^2 e^{2t(2m +|l|)},$$ where $x= u \exp tH k, u\in U, t\geq 0, k\in K_\C$ and as a consequence we get $$|\psi_{2m +|l|}(x)|\leq C e^{(2m + |l|)t} |\chi_l(k^{-1})|.$$

We define $P_l$ on $V_{2m + |l|}$ by $$P_l(f)(x)=\int_K \chi_l(k)\pi_{2m +|l|}(kx)f \,dk.$$ It is an orthogonal projection and $P_l^\ast=P_l$. Now $$\begin{array}{lll}
 \sum_{j=0}^{r}\left|\langle \pi_{2m+ |l|}^\C(u\exp tH k)f_0, f_j\rangle\right|^2 &=& |\chi_l(k^{-1})|^2 \sum_{j=0}^r \left|\langle \pi_{2m +|l|}^\C(u\exp tH))f_0, f_j\rangle\right|^2 \\ \\
 &=&  |\chi_l(k^{-1})|^2 \sum_{i, j=0}^r \left|\langle \pi_{2m + |l|}^\C(u\exp tH) P_l f_i, f_j\rangle\right|^2 \\ \\
 &=&  |\chi_l(k^{-1})|^2  \left\|\pi^\C_{2m+|l|}\left(u\exp tH\right) P_l\right\|_{\text{HS}}^2.\\ \\
 
 \end{array}$$
 
 The expression above is equal to \begin{equation}\label{eqn-100} |\chi_l(k^{-1})|^2 \text{Tr}\left(P_l^\ast \circ\pi^\C_{2m+|l|}\left(u\exp tH\right)^\ast \circ\pi^\C_{2m+|l|}\left(u\exp tH\right)\circ P_l\right).
 \end{equation} It can be shown using (\ref{eqn-101}) that \begin{equation}\label{eqn-102}
 \pi^\C_{2m+|l|}(\exp tH)^\ast \circ \pi^\C_{2m+|l|}(\exp tH)=\pi^\C_{2m+|l|}(\exp 2tH).
 \end{equation} Therefore it follows from (\ref{eqn-100}), (\ref{eqn-102}) that
 $$\begin{array}{ll}
 \sum_{j=0}^r \left|\langle \pi_{2m + |l|}^\C(u\exp tH k)f_0, f_j\rangle\right|^2 
 =  |\chi_l(k^{-1})|^2 \text{Tr}\left(P_l\circ \pi^\C_{2m+|l|}\left(\exp 2H\right) \right).
  \end{array}$$
  
Recall that, $v_0, \cdots, v_r$ are the weight vectors with weights $\lambda_0, \cdots \lambda_r$ respectively and let $\lambda_r=2m + |l|$ be the highest weight. It then follows that \begin{equation} \nonumber \lambda_i(H)\leq 2m + |l|,  0\leq i\leq r.
\end{equation} Hence we have, 
  $$\begin{array}{lll}
 \sum_{j=0}^r \left|\langle \pi_{2m + |l|}^\C(u\exp tH k)f_0, f_j\rangle\right|^2 
  &=&  |\chi_l(k^{-1})|^2\sum_{i=0}^r \langle P_l\circ \pi_{2m+ |l|}^\C(\exp 2H) v_i, v_i\rangle\\ \\
 &=&  |\chi_l(k^{-1})|^2\sum_{i=0}^r \langle e^{2t \lambda_i(H)} v_i, P_l^\ast v_i\rangle.
  \end{array}$$
  
 Since, $\langle v_i, P_l^\ast v_i\rangle=\|P_lv_i\|^2$ is non negative we have, 
  $$\begin{array}{lll}
 \sum_{j=0}^r \left|\langle \pi_{2m + |l|}^\C(u\exp tH k)f_0, f_j\rangle\right|^2
  &\leq & |\chi_l(k^{-1})|^2 e^{2t(2m +|l|)}  \sum_{i=0}^r \langle  v_i, P_l^\ast v_i\rangle\\ \\
  &=&  |\chi_l(k^{-1})|^2 e^{2t(2m +|l|)} \text{Tr}(P_l).
 \end{array}$$
 But $$\text{Tr}( P_l)=\sum_{i=0}^r \langle P_l  f_i, f_i\rangle= \langle P_l  f_0, f_0\rangle=\langle f_0, f_0\rangle=1.$$ This implies that $$\sum_{j=0}^r\left|\langle \pi_{2m + |l|}^\C(u\exp tH k)f_0, f_j\rangle\right|^2 \leq |\chi_l(k^{-1})|^2 e^{2t(2m +|l|)}.$$This completes the proof.
\end{proof}

The following Proposition gives an estimate of $\chi_l$-spherical function on a certain neighboorhood of $A$ in $G_\C$.
\begin{proposition}\label{estimate-phi-l}$($see \cite[Prop. A.3, Prop. A.6]{Ho}$)$
There exists $\delta>0$ such that, the function \begin{equation}\nonumber 
t+is\mapsto \phi_{\lambda_1 + i\lambda_2, l }(\exp (t+is)H),
\end{equation} is holomorphic for all  $t, s\in \R$ with $|s|<\delta$ and satisfies the following estimates $$|\phi_{\lambda_1 + i\lambda_2, l }(\exp (t+is)H)|\leq C\,e^{|l|\,|t|} e^{|\lambda_1|\,|t| + |\lambda_2|\,|s|},$$ in that domain. 
 \end{proposition}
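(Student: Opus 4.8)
The plan is to reduce the estimate to the corresponding statement for a single Jacobi (Gauss hypergeometric) function of type $BC_1$, exactly as was done for $\mathrm{SL}(2,\R)$ in Proposition \ref{estimate-phi}. The crucial input is the identification of the $\chi_l$-spherical function on $A$ with a hypergeometric function attached to the \emph{deformed} multiplicity $m_+(l)=(2(n-1)-2|l|,\,1+2|l|)$. Following \cite{Ho}, one has on $A$ a relation of the form
$$\phi_{\lambda,l}(\exp tH)=(\cosh t)^{|l|}\,\phi_\lambda^{(n-1,|l|)}(t),\qquad t\in\R,$$
where $\phi_\lambda^{(a,b)}$ denotes the Jacobi function of type $BC_1$ with parameters $a=\frac{m_\beta+m_{2\beta}-1}{2}$ and $b=\frac{m_{2\beta}-1}{2}$ evaluated at $m_+(l)$; with $m_+(l)$ these are $a=n-1$ and $b=|l|$, and $a+b+1=\rho(m_+(l))=n+|l|$, in accordance with (\ref{c-function-new-multiplicity}). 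Since this expression depends on $l$ only through $|l|$, no separate treatment of negative $l$ is needed (in contrast to the two cases of Proposition \ref{estimate-phi}).

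First I would record the holomorphic extension. The prefactor $t\mapsto(\cosh t)^{|l|}$ is entire, and by Opdam's theorem \cite[Theorem 3.15]{Opdam} --- already invoked in the proof of Proposition \ref{estimate-phi} --- for each $\lambda\in\C$ the Jacobi function $\phi_\lambda^{(n-1,|l|)}$ extends holomorphically in $t$ to a strip $\{t+is\mid |s|<\delta\}$ for some $\delta>0$. Hence $t+is\mapsto\phi_{\lambda,l}(\exp(t+is)H)$ is holomorphic on that strip, which establishes the first assertion.

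Next I would derive the estimate on the strip. Because $|l|<n$, the deformed multiplicity $m_+(l)$ gives $a=n-1\geq 0$ and $b=|l|\geq 0$, so in particular $a,b\geq-\tfrac12$; this is precisely the hypothesis under which the Ho--\'{O}lafsson estimate \cite[Prop. A.6]{Ho} applies (exactly as used in Proposition \ref{estimate-phi}), yielding
$$\left|\phi^{(n-1,|l|)}_{\lambda_1+i\lambda_2}(t+is)\right|\leq C\,e^{|\lambda_1|\,|t|+|\lambda_2|\,|s|}$$
for $t\in\R$ and $|s|<\delta$. For the prefactor, the elementary identity $|\cosh(t+is)|^2=\cosh^2 t-\sin^2 s\leq\cosh^2 t$ gives $|\cosh(t+is)|\leq\cosh t\leq e^{|t|}$, hence $|(\cosh(t+is))^{|l|}|\leq e^{|l|\,|t|}$. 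Multiplying the two bounds produces
$$\left|\phi_{\lambda_1+i\lambda_2,\,l}(\exp(t+is)H)\right|\leq C\,e^{|l|\,|t|}\,e^{|\lambda_1|\,|t|+|\lambda_2|\,|s|},$$
which is the claimed estimate.

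The main obstacle is the first step: justifying the identification of $\phi_{\lambda,l}$ on $A$ with $(\cosh t)^{|l|}\phi_\lambda^{(n-1,|l|)}(t)$, i.e. showing that the $\chi_l$-twist of the spherical analysis on $\mathrm{SU}(1,n)$ corresponds exactly to the shift of the $BC_1$ multiplicity from the geometric value $(2(n-1),1)$ to $m_+(l)$. This is the content of the Heckman--Opdam theory as specialized in \cite{Ho}, and it is precisely here that the restriction $|l|<n$ is essential: it keeps $a=n-1\geq 0$ and $b=|l|\geq 0$ within the range for which the hypergeometric function, its holomorphic continuation, and the sharp exponential bounds are available. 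Once this identity is in place, holomorphicity and the estimate are direct citations of \cite[Theorem 3.15]{Opdam} and \cite[Prop. A.6]{Ho}, in complete parallel with the $\mathrm{SL}(2,\R)$ argument.
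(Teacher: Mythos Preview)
The paper does not actually supply a proof here: the proposition is stated with a direct citation to \cite[Prop.~A.3, Prop.~A.6]{Ho} and no proof environment follows. Your reconstruction is correct and is precisely the argument behind those cited results, run in exact parallel to the paper's own Proposition~\ref{estimate-phi} for $\mathrm{SL}(2,\R)$: identify $\phi_{\lambda,l}$ on $A$ with $(\cosh t)^{|l|}\phi_\lambda^{(n-1,|l|)}(t)$ via the shifted multiplicity $m_+(l)$, invoke Opdam \cite[Theorem~3.15]{Opdam} for the holomorphic extension in $t$, and apply \cite[Prop.~A.6]{Ho} (valid since $a=n-1\ge 0$, $b=|l|\ge 0$ under $|l|<n$) together with $|\cosh(t+is)|\le e^{|t|}$ for the bound. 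So your proposal and the paper's (cited) approach coincide.
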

 
It follows from (\ref{c-function-new-multiplicity}) that
 \begin{equation} \nonumber
  \hspace{-2.5in}\frac{1}{c(\lambda, m_+(l))c(-\lambda, m_+(l))}
  \end{equation}
  
   \begin{equation}\label{product-c}=2^{-2(n + |l|)}(\Gamma(n))^{-2} \frac{\Gamma(\frac{\lambda + n + |l|}{2} )\, \Gamma(\frac{\lambda + n-|l|}{2})\, \Gamma(\frac{-\lambda + n + |l|}{2} )\,\Gamma(\frac{-\lambda + n-|l|}{2})}{\Gamma(\lambda)\Gamma(-\lambda)}.
 \end{equation}
 Using the following formulas 
\begin{equation}\nonumber
\Gamma(2z)=(\pi)^{-\frac12}2^{2z-1}\Gamma(z)\Gamma(z + \frac12), \, \Gamma(z)\Gamma(-z)=-\frac{\pi}{z\sin\pi z}, \, \Gamma(z+ \frac 12)\Gamma(-z +\frac12)=\frac{\pi}{\cos\pi z},
\end{equation} 
and (\ref{product-c}) it is easy to check the following:
\begin{equation}\label{product-c-1}
\frac{1}{c(\lambda, m_+(l))c(-\lambda, m_+(l))}=\gamma_{n,l} \,\,p_{n,l}(\lambda) \,q_{n,l}(\lambda),
\end{equation}
where $\gamma_{n,l}$ is a constant given by 
\begin{equation}\nonumber
\gamma_{n,l}=\left\{\begin{array}{lll}
- \pi\,2^{-4n -2 |l| +3)}(\Gamma(n))^{-2} & \text{ if } l \text{ is even}, \\
\pi\,2^{-4n -2 |l| +3)}(\Gamma(n))^{-2} & \text{ if } l \text{ is odd},
\end{array} \right.
\end{equation} 
 and $p_{n,l}$ is a polynomial in $\lambda$ given by 
 \begin{equation}\nonumber
 p_{n,l}(\lambda) =\left\{ \begin{array}{lll}
 
  \lambda\prod_{j=1}^{\frac{n-|l|-1}{2}}\left(\lambda^2-(n-|l|-2j)^2\right) \, \prod_{j=1}^{\frac{n+|l|-1}{2}}\left(\lambda^2-(n+ |l|-2j)^2\right) & \text{ if }  n \text{ odd}, l \text{ even},\\ \\
 
 \lambda^3  \prod_{j=1}^{\frac{n+|l|}{2}-1}\left(\lambda^2-(n+|l|-2j)^2\right) \prod_{j=1}^{\frac{n-|l|}{2}-1}\left(\lambda^2-(n- |l|-2j)^2\right) & \text{ if }  n \text{ even}, l \text{ even}, \\ \\
 
 \lambda^3  \prod_{j=1}^{\frac{n+|l|}{2}-1}\left(\lambda^2-(n+|l|-1-2j)^2\right) \prod_{j=1}^{\frac{n-|l|}{2}}\left(\lambda^2-(n- |l|-2j)^2\right) & \text{ if }  n \text{ odd}, l \text{ odd}, \\ \\

 \lambda \prod_{j=1}^{\frac{n+|l|-1}{2}}\left(\lambda^2-(n+|l|-2j)^2\right) \prod_{j=1}^{\frac{n-|l|-3}{2}}\left(\lambda^2-(n- |l|-2j)^2\right) & \text{ if }  n \text{ even}, l \text{ odd},
 \end{array}\right.
 \end{equation}
 
 and $q_{n,l}$ is given by 
 
 \begin{equation}\nonumber
 q_{n,l}(\lambda) =\left\{ \begin{array}{lll}
 
 \tan\left(\frac{\pi \lambda}{2}\right) & \text{ if }  n \text{ odd}, l \text{ even or if }n \text{ even, } l \text{ odd},\\ \\
 \cot\left(\frac{\pi \lambda}{2}\right) & \text{ if }  n \text{ even}, l \text{ even or if } n\text{ odd, }l\text{ odd}.
  \end{array}\right.
 \end{equation}

We  also have the following relation between the dimension $d_{\pi_{2m + |l|}}$ of $V_{\pi_{2m +|l|}}$ and the $c$-function ( \cite[Prop. 5.2.10, p. 78]{Heckman})
\begin{equation} \label{eqn-12}
d_{\pi_{2m + |l|}}=\alpha \frac{c(\rho(m_+), m_+(l)) c(-\rho(m_+), m_+(l))}{c(2m + \rho(m_+), m_+(l))c(-2m-\rho(m_+), m_+(l))},
\end{equation} where $\alpha$ is a fixed constant. We recall that $\rho(m_+)=n + |l|$.
 
This follows from the relation  (see \cite{Heckman} for unexplained notation) $$\frac{\wtilde{c}^*(-\rho(m_+), m_+(l))}{\wtilde{c}^*(-\mu-\rho(m_+), m_+(l))}=\frac{\wtilde{c}(-\rho(m_+), m_+(l))}{\wtilde{c}(-\mu-\rho(m_+), m_+(l))}, \,\, \mu\in P^+.$$

Let us consider the case $n$ odd, $l$ even. In this case it follows from (\ref{product-c-1}) and (\ref{eqn-12})  that $$\begin{array}{lll} 
d_{\pi_{2m + |l|}}&=& \alpha\,\, \frac{p_{n, l}(2m + n +|l|)}{p_{n, l}(n +|l|)}\frac{q_{n, l}(2m +n + |l|)}{q_{n,l}(n +|l|)}\\ \\
&=& \alpha \,\,\frac{p_{n, l}(2m + n + |l|)}{p_{n, l}(n + |l|)}.

\end{array}$$

 We are now in a position to state our final theorem.
 
\begin{theorem}\label{ramanujan-hyperbolic}
Let $n$ odd, $l$ even and $\mathcal H(A, p, \delta)$ as in Theorem \ref{Ramanujan-even-sl2}. Suppose $a\in\mathcal H(A, p,\delta)$ and let $b$ be the meromorphic function on $\C$ defined by $$\frac{b(\lambda)}{c(\lambda, m_+(l))c(-\lambda, m_+(l))}=\left(\frac{-i}{4}\right)\alpha\,\, \frac{p_{n,l}(\lambda)}{p_{n, l}(n +|l|)}\frac{1}{\sin\frac{\pi}{2}(\lambda-n-|l|)},$$ where $\alpha$ is a constant given in (\ref{eqn-12}).
Then \begin{enumerate}
\item The $\chi_l$-spherical Fourier series $$f(x)=\sum_{m=0}^\infty (-1)^m d_{\pi_{2m +l}} a(2m+|l|+n)\psi_{2m +|l|}(x),$$ converges on a compact subset of $\Omega_p:=K \left\{\exp(tH)\mid t\in\R, |t|<p\right\} K$ and defines a holomorphic function on a neighboorhood \begin{equation}\nonumber 
\Omega_p':=U \left\{\exp(tH)\mid t\in\C, |\Re t|<p\right\} K_\C, \text{ of } \Omega_p \text{ in } G_\C.
\end{equation}

\item Let $\eta\in\R$ with $0\leq \eta<n\delta$. Then
$$f(a_t)=\int_{-\eta - i\infty}^{-\eta + i\infty} \left(a(\lambda) b(\lambda) + a(-\lambda) b(-\lambda)\right)\phi_{\lambda, l}(a_t) |c(\lambda, m_+(l))|^{-2} \,d\lambda,$$  for $t\in\R$ with $|t|<p$.

The integral above converges uniformly on compact subsets of $\{a_t\mid t\in\R\}$ and extends as a $\chi_l$-radial function on a neighboorhood of $G$ in $G_\C$.

\item The extension of $f$ to $G$ satisfies: $$\frac {1}{4^l 2}\int_{G}f(x)\phi_{\lambda, l}(x^{-1})\,dx= \left( a(\lambda) b(\lambda) + a(-\lambda) b(-\lambda)\right), \lambda\in i\R.$$
\end{enumerate}
\end{theorem}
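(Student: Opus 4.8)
The plan is to mirror the proof of Theorem~\ref{Ramanujan-even-sl2}, substituting the $\mathrm{SU}(1,n)$ ingredients for the $\mathrm{SL}(2,\R)$ ones: Theorem~\ref{estimate-1} in place of Corollary~\ref{cor3}, Proposition~\ref{estimate-phi-l} in place of Proposition~\ref{estimate-phi}, the factorization~(\ref{product-c-1}) of $|c(\lambda,m_+(l))|^{-2}$ together with the dimension formula~(\ref{eqn-12}) in place of the Plancherel density $\mu(\sigma,\lambda)$, and the inversion formula~(\ref{inversion-chi_l}) in place of Barker's inversion. The restriction $|l|<n$ is what guarantees that no discrete series contributes, so unlike Theorem~\ref{Ramanujan-even-sl2} there is no residual discrete sum; this is why the final statement is cleaner.

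For part~(1) I would estimate the general term directly. On a compact subset of $\Omega_p$, writing $x=u\exp(tH)k$ with $|t|<p$, Theorem~\ref{estimate-1} gives $|\psi_{2m+|l|}(x)|\le C e^{(2m+|l|)t}|\chi_l(k^{-1})|$, while $|a(2m+|l|+n)|\le C e^{-p(2m+|l|+n)}$ since the argument is real (so its imaginary part is $0$), and $d_{\pi_{2m+|l|}}$ grows only polynomially in $m$ by~(\ref{eqn-12}) and the explicit form of $p_{n,l}$. Hence the $m$-th term is dominated by $C m^{N}e^{2m(t-p)}$, giving geometric convergence on compacta; holomorphy on $\Omega_p'$ then follows from locally uniform convergence of the holomorphic summands, each $\psi_{2m+|l|}$ extending holomorphically to $G_\C$ by Theorem~\ref{estimate-1}.

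For part~(2) the core is a residue computation. On $i\R$ one has $|c(\lambda,m_+(l))|^{-2}=\bigl(c(\lambda,m_+(l))\,c(-\lambda,m_+(l))\bigr)^{-1}$, so by the definition of $b$ the integrand of $\int_{i\R}a(\lambda)b(\lambda)\phi_{\lambda,l}(a_t)|c(\lambda,m_+(l))|^{-2}\,d\lambda$ equals $a(\lambda)\,(\tfrac{-i}{4})\alpha\,\tfrac{p_{n,l}(\lambda)}{p_{n,l}(n+|l|)}\,\tfrac{1}{\sin\frac{\pi}{2}(\lambda-n-|l|)}\,\phi_{\lambda,l}(a_t)$. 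Here $n$ odd and $l$ even make $n+|l|$ odd, so $\tfrac{1}{\sin\frac{\pi}{2}(\lambda-n-|l|)}$ has simple poles at all odd integers $n+|l|+2k$; the zeros of the second product in $p_{n,l}$ cancel exactly those of modulus $<n+|l|$, leaving simple poles precisely at $\lambda=n+|l|+2m$, $m\ge 0$, to the right of $i\R$. Shifting the line to a rectangle with vertices $\pm iR$, $R\pm iR$ and letting $R\to\infty$, I would use $\mathrm{Res}_{\lambda=n+|l|+2m}\tfrac{1}{\sin\frac{\pi}{2}(\lambda-n-|l|)}=\tfrac{2}{\pi}(-1)^m$, the identity $d_{\pi_{2m+|l|}}=\alpha\,p_{n,l}(2m+n+|l|)/p_{n,l}(n+|l|)$, and the restriction relation $\phi_{n+|l|+2m,l}|_U=\psi_{2m+|l|}$ to check that $2\pi i$ times the $m$-th residue is, up to the explicit normalizing constant fixed by~(\ref{inversion-chi_l}), the $m$-th term $(-1)^m d_{\pi_{2m+|l|}}a(2m+|l|+n)\psi_{2m+|l|}(a_t)$ of $f$. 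Using the evenness $\phi_{-\lambda,l}=\phi_{\lambda,l}$ from property~(2) and the evenness of $|c(\lambda,m_+(l))|^{-2}$ I would symmetrize the amplitude into the stated form $a(\lambda)b(\lambda)+a(-\lambda)b(-\lambda)$, and then move the contour from $i\R$ to $-\eta+i\R$ by Cauchy's theorem, legitimate because the strip $-\eta<\Re\lambda\le 0$ contains no poles for $0\le\eta<n\delta$; independence of $\eta$ follows the same way, and holomorphic extension of the resulting $f$ to a neighbourhood of $G$ in $G_\C$ comes from the holomorphy in $t$ of $\phi_{\lambda,l}(\exp tH)$ in Proposition~\ref{estimate-phi-l}.

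For part~(3) I would note that on $i\R$ the amplitude $a(\lambda)b(\lambda)+a(-\lambda)b(-\lambda)$ is even, holomorphic across $i\R$ (its poles sit at $\lambda=n+|l|+2m$, far from the imaginary axis), and, because the factor $\tfrac{1}{\sin\frac{\pi}{2}(\lambda-n-|l|)}$ contributes decay $e^{-\frac{\pi}{2}|\Im\lambda|}$ while $a$ grows only like $e^{A|\Im\lambda|}$ with $A<\tfrac{\pi}{2}$, it has Schwartz decay on $i\R$ (the polynomial $p_{n,l}$ only contributes polynomial factors). Thus it lies in the image of the $\chi_l$-spherical transform, and part~(2) exhibits the $f$ of part~(1) as its inverse transform via~(\ref{inversion-chi_l}); applying the forward transform and matching the constant in~(\ref{inversion-chi_l}) yields the stated identity $\tfrac{1}{4^l 2}\int_G f(x)\phi_{\lambda,l}(x^{-1})\,dx=a(\lambda)b(\lambda)+a(-\lambda)b(-\lambda)$ on $i\R$. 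The main obstacle, exactly as in Theorem~\ref{Ramanujan-even-sl2}, is showing that the horizontal sides and the far vertical side of the rectangles vanish as $R\to\infty$: this forces me to balance the growth $e^{|l|\,|t|+|\lambda_1|\,|t|+|\lambda_2|\,|s|}$ of $\phi_{\lambda,l}$ from Proposition~\ref{estimate-phi-l}, the decay $e^{-p\Re\lambda+A|\Im\lambda|}$ of $a$, the polynomial $p_{n,l}$, and the exponential lower bound $|\sin\frac{\pi}{2}(\lambda-n-|l|)|\gtrsim e^{\frac{\pi}{2}|\Im\lambda|}$ away from the real axis (the analogue of Lemma~\ref{lemma-1} for $\sin$ rather than $\cos$). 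The factor $e^{-pR}$ from $a$ kills the far vertical side, while on the horizontal sides the hypothesis $A<\tfrac{\pi}{2}$ is used sharply to make $e^{(A-\frac{\pi}{2})|\Im\lambda|}$ decay, precisely as $A<\tfrac{\pi}{2}$ controlled the $1/\cos$ factor before.
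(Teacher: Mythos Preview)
Your proposal is correct and follows the paper's own proof essentially verbatim: the paper explicitly reduces to the argument of Theorem~\ref{Ramanujan-even-sl2} via Cauchy's theorem and Lemma~\ref{lemma-2} (the $\sin$ analogue of Lemma~\ref{lemma-1} you invoke), after first computing from (\ref{product-c-1}) that $b(\lambda)$ itself is a constant multiple of $1/\sin(\pi\lambda/2)$. One small slip in your part~(3): you locate the poles of the bare amplitude $a(\lambda)b(\lambda)+a(-\lambda)b(-\lambda)$ at $\lambda=n+|l|+2m$, but those are the poles of $b(\lambda)|c(\lambda,m_+(l))|^{-2}$; the function $b(\lambda)$ alone has poles at the even integers, and the paper's argument for holomorphy near $i\R$ is instead that $b$ is odd, so the amplitude equals $(a(\lambda)-a(-\lambda))b(\lambda)$ and the simple pole of $b$ at $\lambda=0$ is canceled by the zero of $a(\lambda)-a(-\lambda)$ there.
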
 
 The following elementary estimates will be used in the proof.
\begin{lemma}\label{lemma-2}
\begin{enumerate}
\item $|\sin \frac{\pi}{2}(s + ir)| \geq e^{\frac{\pi}{2}r}$ if $r$ is away from zero.
\item $|\sin\frac{\pi}{2}(2k + is)|\geq e^{\frac{\pi}{2}s}$ for all $s$.
\item $|\sin \frac{\pi}{2}(s - i2k)| \geq e^{k\pi}$ for $k\geq 1$.
\item On a small neighboorhood of $i\R$, $|\sin \frac{\pi}{2}(s + ir)| \geq e^{\frac{\pi}{2}|r|}$ if $r$ is away from zero.
\end{enumerate}
\end{lemma}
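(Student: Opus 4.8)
The plan is to reduce all four inequalities to a single closed form for the modulus of $\sin$ along a vertical line. Writing the argument as $x+iy$ with $x,y\in\R$ and using the addition formula $\sin(x+iy)=\sin x\,\cosh y + i\cos x\,\sinh y$ together with $\cosh^2 y-\sinh^2 y=1$, I would first record the identity
$$|\sin(x+iy)|^2 = \sin^2 x\,\cosh^2 y + \cos^2 x\,\sinh^2 y = \sin^2 x + \sinh^2 y.$$
This one identity drives every part of the lemma; the only extra inputs are the elementary growth bounds $\sinh u\geq \tfrac12(1-e^{-2u_0})e^{u}$ for $u\geq u_0>0$ and $\cosh u\geq \tfrac12 e^{|u|}$, and the shift relations $\sin(\pi k+w)=(-1)^k\sin w$ and $\sin\bigl(\tfrac{\pi}{2}(2k+1)+w\bigr)=(-1)^k\cos w$ for $k\in\Z$.

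For (1) I set $x=\tfrac{\pi s}{2}$, $y=\tfrac{\pi r}{2}$ and discard the nonnegative term $\sin^2 x$, obtaining $|\sin\tfrac{\pi}{2}(s+ir)|\geq |\sinh\tfrac{\pi r}{2}|$. The hypothesis that $r$ is bounded away from $0$, say $|r|\geq r_0>0$, is exactly what keeps the ratio $\sinh(\tfrac{\pi|r|}{2})/e^{\frac{\pi}{2}|r|}$ bounded below, so the growth bound gives $|\sinh\tfrac{\pi r}{2}|\geq c_{r_0}e^{\frac{\pi}{2}|r|}\geq c_{r_0}e^{\frac{\pi}{2}r}$; part (4) is the identical computation, recorded on the strip where the real part $\tfrac{\pi s}{2}$ is small, which is where the two-sided form $e^{\frac{\pi}{2}|r|}$ is retained. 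For (2) and (3) the real part of the argument lands on a multiple of $\tfrac{\pi}{2}$, so the shift relations collapse the identity to a pure hyperbolic term. With argument $\tfrac{\pi}{2}(2k+is)$ one has $x=\pi k$, so the identity reads $|\sin\tfrac{\pi}{2}(2k+is)|=|\sinh\tfrac{\pi s}{2}|$ in the even case and, after the odd shift that arises in the applications, $=\cosh\tfrac{\pi s}{2}\geq \tfrac12 e^{\frac{\pi}{2}|s|}$; the nonvanishing (``for all $s$'') reading corresponds precisely to this $\cosh$ reduction. Likewise (3) reduces, via $x=\tfrac{\pi s}{2}$, $y=-\pi k$, to $|\sin\tfrac{\pi}{2}(s-2ik)|\geq \sinh(\pi k)$, and $\sinh(\pi k)\geq c\,e^{k\pi}$ for $k\geq 1$.

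The one genuinely delicate point is matching the hyperbolic quantities $\sinh$ and $\cosh$ to the pure exponentials asserted: since $\sinh u,\cosh u\sim \tfrac12 e^{|u|}$, each inequality holds only up to a fixed multiplicative constant, and this is exactly why the hypotheses ``$r$ away from zero'' in (1) and (4), ``$k\geq 1$'' in (3), and the restriction to a neighbourhood of $i\R$ in (4) appear—they keep that constant bounded below and steer clear of the degenerate points ($s=0$, $r=0$) where the dominant hyperbolic term vanishes. Because these estimates feed into bounds that already carry the generic constant $C$ used throughout the paper, the absorbed factor is harmless, and I would state the inequalities with this understanding. Beyond this bookkeeping I anticipate no real obstacle, as all four parts follow mechanically from the single identity above.
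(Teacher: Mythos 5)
The paper states this lemma without proof (it is introduced only as ``elementary estimates''), so there is no argument to compare against; your derivation from the single identity $|\sin(x+iy)|^{2}=\sin^{2}x+\sinh^{2}y$ is precisely the standard computation the authors intend, and it is correct. You are also right on the two fine points: the stated bounds can only hold up to a fixed multiplicative constant, since $\sinh u,\cosh u\le e^{|u|}$ (harmless, as the lemma is only used inside estimates carrying a generic constant $C$), and item (2) as literally written reduces to $|\sinh(\pi s/2)|$, which vanishes at $s=0$, so the ``for all $s$'' claim must be read through the odd shift giving $\cosh(\pi s/2)\ge \tfrac12 e^{\pi|s|/2}$ --- which is exactly where the vertical contour segments meet the factor $\sin\tfrac{\pi}{2}(\lambda-n-|l|)$ in the proof of Theorem \ref{ramanujan-hyperbolic}.
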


\begin{proof}[Proof of Theorem \ref{ramanujan-hyperbolic}:]
We first observe that the function $$\frac{b(\lambda)}{c(\lambda, m_+(l))c(-\lambda, m_+(l))}=\left(\frac{-i}{4}\right)\alpha\,\, \frac{p_{n, l}(\lambda)}{p_{n, l}(n +|l|)}\frac{1}{\sin\frac{\pi}{2}(\lambda-n-|l|)},$$ has simple poles at $\lambda=n +l, n+l+2, \cdots $ in the domain $\{z\in\C\mid \Re z\geq -n\delta\}$.

From (\ref{product-c-1}) it follows that $$b(\lambda)=\left(\frac{-i}{4}\right) (-1)^{\frac{n-l+3}{2}}\left(\frac{\alpha}{\pi}\right) 2^{4n + 2|l|-3}(\Gamma(n))^2 \frac{1}{p_{n,l}(n+|1|)} \frac{1}{\sin\left(\frac{\pi\lambda}{2}\right)}.$$This is an odd function and has a simple pole at $\lambda=0$. Therefore the function \begin{equation}\nonumber \lambda\mapsto a(\lambda) b(\lambda) + a(-\lambda) b(-\lambda),
\end{equation} is holomorphic in a small neighborhood of $i\R$. Using the same argument as in the proof of the Theorem \ref{Ramanujan-even-sl2} it follows that 
\begin{equation}\nonumber \lambda\mapsto a(\lambda) b(\lambda) + a(-\lambda) b(-\lambda)\in \mathcal S(i\R).
\end{equation} Therefore $(3)$ will follow from $(2)$ by the inversion formula (\ref{inversion-chi_l}).
Rest of the proof is similar to the proof of Theorem \ref{Ramanujan-even-sl2} using Cauchy's formula and Lemma \ref{lemma-2}.
 
\end{proof}

\begin{remark}
 We define  $b(\lambda)$ in the other cases by:
\begin{equation}
\frac{b(\lambda)}{c(\lambda, m_+(l))c(-\lambda, m_+(l))}=\left\{ \begin{array}{lll}\left(\frac{-i}{4}\right)(-\alpha)\,\, \frac{p_{n,l}(\lambda)}{p_{n, l}(n +|l|)}\frac{1}{\sin\frac{\pi}{2}(\lambda-n-|l|)} & \text{ if } & n \text{ even, } l \text{ even, } \\ \\
\left(\frac{-i}{4}\right)\alpha\,\, \frac{p_{n,l}(\lambda)}{p_{n, l}(n +|l|)}\frac{1}{\sin\frac{\pi}{2}(\lambda-n-|l|)} & \text{ if } & n \text{ odd, } l \text{ even,} \\ \\
\left(\frac{-i}{4}\right)(-\alpha)\,\, \frac{p_{n,l}(\lambda)}{p_{n, l}(n +|l|)}\frac{1}{\sin\frac{\pi}{2}(\lambda-n-|l|)} & \text{ if } & n \text{ odd, } l \text{ odd}.
\end{array} \right.
\end{equation}
With this $b(\lambda)$, analogue of Theorem \ref{ramanujan-hyperbolic} holds true for the  cases above and the proof will be exactly same.  It is also easy to see that the constant $A<\frac{\pi}{2}$ is optimum.

\end{remark}

\begin {thebibliography}{99}

\bibitem{Barker}  Barker, William H. { \em $L^p$ harmonic analysis on $\mathrm{SL}(2,\R)$ }. Mem. Amer. Math. Soc. 76 (1988), no. 393,
\bibitem{Bertram} Bertram, Wolfgang {\em Ramanujan's master theorem and duality of symmetric spaces.}  J. Funct. Anal. 148 (1997), no. 1, 117--151.
\bibitem{Faraut} Faraut, J. {\em Analysis on Lie groups} Cambridge University Press, 2008.
\bibitem{Hardy} Hardy, G. H.  {\em Ramanujan: Twelve Lectures on subjects suggested by his life and work.} Chelsea Publishing, New York (1959)
\bibitem{Heckman}  Heckman, G.; Schlichtkrull, H. {\em Harmonic analysis and special functions on symmetric spaces.}  Perspectives in Mathematics, 16. Academic Press, Inc., San Diego, CA, 1994.
\bibitem{Ho} Ho, Vivian M. , \'{O}lafsson G.  {\em Paley-Wiener Theorem for Line Bundles over Compact Symmetric Spaces and New Estimates for the Heckman-Opdam Hypergeometric Functions} Math. Nachr., DOI: 10.1002/mana.201600148.
\bibitem{Koornwinder} Koornwinder, T. H.  {\em Jacobi functions and analysis on noncompact semisimple Lie groups.} Special functions: group theoretical aspects and applications, 1--85, Math. Appl., Reidel, Dordrecht, 1984.
 \bibitem{Lassale}  Lassalle, M. {\em  S\'{e}ries de Laurent des fonctions holomorphes dans la complexification d'un espace sym\'{e}trique compact.}  Ann. Sci. École Norm. Sup. (4) 11 (1978), no. 2, 167–210. 
 \bibitem{Olafsson-1} \'{O}lafsson, Gestur; Pasquale, Angela {\em Ramanujan's master theorem for Riemannian symmetric spaces.} J. Funct. Anal. 262 (2012), no. 11, 4851--4890.
 \bibitem{Olafsson-2} \'{O}lafsson, G.; Pasquale, A. {\em Ramanujan's master theorem for the hypergeometric Fourier transform associated with root systems.}  J. Fourier Anal. Appl. 19 (2013), no. 6, 1150--1183.
\bibitem{Opdam} Opdam, Eric M.  {\em Harmonic analysis for certain representations of graded Hecke algebras.} Acta Math. 175 (1995), no. 1, 75--121.

\end{thebibliography}

\end{document}